\documentclass[preprint,12pt]{elsarticle}

\usepackage{amsfonts}
\usepackage{amsmath,amssymb,amsthm,amsthm}
\usepackage{mathtools}
\usepackage{dsfont}
\usepackage{etoolbox}
\usepackage{color}
\usepackage{url}
\newcommand{\comment}[1]{}
\usepackage{algorithm}
\usepackage{algpseudocode}
\algdef{SE}[SUBALG]{Indent}{EndIndent}{}{\algorithmicend\ }%
\algtext*{Indent}
\algtext*{EndIndent}

%%%%%%%% Macros %%%%%%%%%%%%%%%

\setcounter{secnumdepth}{2} \setcounter{section}{0}
\newtheorem{theorem}{Theorem}[section]

\newtheorem{definition}[theorem]{Definition}

\newtheorem{lemma}[theorem]{Lemma}

\newtheorem{remark}[theorem]{Remark}
\numberwithin{equation}{section}

\usepackage[normalem]{ulem}

\newcommand{\BEA}{\begin{eqnarray}}
\newcommand{\EEA}{\end{eqnarray}}

%-------- TO BE COMMENTED BEFORE SUBMISSION ------
\usepackage[textsize=small]{todonotes}

\newcommand{\jh}[1]{\textcolor{red}{#1}}
%------------------------------------------------

\journal{Appl. Comput. Harmon. Anal.}
\bibliographystyle{elsarticle-num}

%%%%%%%%%%%%%%%%%%%%%%%%%%%%%%%%%%%%%%%%%%%%%%%%%%%%%%%

%%%%%%%%%%%%%%%%%%%%%%%%%%%%%%%%%%%%%%%%%%%%%%%%%%%%%%%
\begin{document}
%%%%%%%%%%%%%%%%%%%%%%%%%%%%%%%%%%%%%%%%%%%%%%%%%%%%%%%

%%%%%%%%%%%%%%%%%%%%%%%%%%%%%%%%%%%%%%%%%%%%%%%%%%%%%%%
\begin{frontmatter}
%%%%%%%%%%%%%%%%%%%%%%%%%%%%%%%%%%%%%%%%%%%%%%%%%%%%%%%

%%%%%%%%%%%%%%%%%%%%%%%%%%%%%%%%%%%%%%%%%%%%%%%%%%%%%%%
\title{Fractional Diffusion Maps
}
%%%%%%%%%%%%%%%%%%%%%%%%%%%%%%%%%%%%%%%%%%%%%%%%%%%%%%%

%%%%%%%%%%%%%%%%%%%%%%%%%%%%%%%%%%%%%%%%%%%%%%%%%%%%%%%
\author[rvt]{Harbir Antil\corref{cor}}
\ead{hantil@gmu.edu}
\author[rvt]{Tyrus Berry}
\ead{tberry@gmu.edu}
\author[rvt2]{John Harlim}
\ead{jharlim@psu.edu}
\address[rvt]{Department of Mathematical Sciences and the Center for Mathematics and Artificial Intelligence (CMAI), George Mason University, Fairfax, VA 22030, USA.}
\address[rvt2]{Department of Mathematics, Department of Meteorology and Atmospheric Science, \& Institute for Computational and Data Sciences, the Pennsylvania State University, University Park, PA 16802, USA.}
%\address[rvt3]{Department of Meteorology and Atmospheric Science, the Pennsylvania State University, 503 Walker Building, University Park, PA 16802-5013, USA}
%\address[rvt4]{Institute for CyberScience, the Pennsylvania State University, 224B Computer Building, University Park, PA 16802, USA}

%%%%%%%%%%%%%%%%%%%%%%%%%%%%%%%%%%%%%%%%%%%%%%%%%%%%%%%%
\begin{abstract}
{In this paper, we extend the diffusion maps algorithm on a family of heat kernels that are either local (having exponential decay) or nonlocal (having polynomial decay), arising in various applications. For example, these kernels have been used as a regularizer in various supervised learning tasks for denoising images. Importantly, these heat kernels give rise to operators that include (but are not restricted to) the generators of the classical Laplacian associated to Brownian processes as well as the fractional Laplacian associated with $\beta$-stable L\'evy processes. For local kernels, while the method is a version of the diffusion maps algorithm, we show that the applications with non-Gaussian local heat kernels approximate temporally rescaled Laplace-Beltrami operators. For the non-local heat kernels, we modify the diffusion maps algorithm to estimate fractional Laplacian operators. Here, the graph distance is used to approximate the geodesic distance with appropriate error bounds. While this approximation becomes numerically expensive as the number of data points increases, it produces an accurate operator estimation that is robust to the choice of the kernel bandwidth parameter value. In contrast, the local kernels are numerically more efficient but more sensitive to the choice of kernel bandwidth parameter value. In an application to estimate non-smooth regression functions, we find that using the nonlocal kernel as a regularizer produces a more robust and accurate estimate than using local kernels. For manifolds with boundary, we find that the proposed fractional diffusion maps framework implemented with non-local kernels approximates the regional fractional Laplacian.}
\end{abstract}
%%%%%%%%%%%%%%%%%%%%%%%%%%%%%%%%%%%%%%%%%%%%%%%%%%%%%%%

%
%%%%%%%%%%%%%%%%%%%%%%%%%%%%%%%%%%%%%%%%%%%%%%%%%%%%%%%%
\begin{keyword}
dichotomy in heat kernel, local kernel, nonlocal kernel, fractional Laplacian, diffusion maps, operator estimation
\end{keyword}
%%%%%%%%%%%%%%%%%%%%%%%%%%%%%%%%%%%%%%%%%%%%%%%%%%%%%%%%

%%%%%%%%%%%%%%%%%%%%%%%%%%%%%%%%%%%%%%%%%%%%%%%%%%%%%%%
\end{frontmatter}
%%%%%%%%%%%%%%%%%%%%%%%%%%%%%%%%%%%%%%%%%%%%%%%%%%%%%%%

%%%%%%%%%%%%%%%%%%%%%%%%%%%%%%%%%%%%%%%%%%%%%%%%%%%%%%%
\section{Introduction}
%%%%%%%%%%%%%%%%%%%%%%%%%%%%%%%%%%%%%%%%%%%%%%%%%%%%%%%

{ An important idea in machine learning and nonparametric regression is the so-called ``kernel trick" in which a linear algorithm is extended to capture nonlinear features by replacing occurrences of the Euclidean dot product with the evaluation of a kernel function on all pairs of data points.  Classical statistical learning theory analysis would suggest that these kernel methods suffer from the curse-of-dimensionality in terms of the ambient dimension of the data, however, when the data can be assumed to lie near a submanifold of the data space, the curse-of-dimensionality can be reduced to the intrinsic dimension of the manifold \cite{diffusion,singer}.  Rigorous interpretation of these methods and their assumptions (especially in the limit of large data) requires formulating the kernel as a regularization term that should be understood in terms of intrinsic norms and operators on the underlying manifold.  This regularization is determined by the choice of the kernel function, for example, the Gaussian function is connected to the heat kernel of the Laplace-Beltrami operator on a compact Riemannian manifold \cite{diffusion} and naturally introduces an $H^1$ regularization \cite{regularization}.  Thus, a fundamental question in this branch of machine learning is: What regularizations can we achieve with kernel methods and which kernels are associated with which regularizations?

\comment{
Secondarily, there has been significant recent interest in Monte-Carlo based solvers for PDEs \cite{MCpde}, and kernel-based approximation of differential operators is the key to Monte-Carlo based solvers for PDEs on submanifolds \cite{JohnPDE1,JohnPDE2}.  The results in this manuscript are applicable to these PDE solvers and extend their applicability to fractional Laplace operators on submanifolds.  Finally, we provide a kernel-based perspective which is related to the popular Isomap and Parallel Transport Unfolding (PTU) algorithms.  Isomap \cite{tenenbaum2000global,MBernstein_VDSilva_JCLangford_JBTenenbaum_2000a} and PTU \cite{PTU} can be easily interpreted under the restrictive assumptions of manifolds that are both topologically trivial (contractible) and geometrically trivial (developable, which implies flat in the Riemannian sense).  However, Isomap and PTU are still useful for many examples \jh{that} do not satisfy these assumptions, and in these contexts, there is a limited theoretical interpretation of their behavior.  While our analysis cannot address these algorithms directly, our kernel-based algorithms are closely related, and we rely on their methods for the key step of approximating geodesic distances.  Thus, our results give an alternative final step to Isomap and PTU, by replacing Multi-Dimensional Scaling (MDS) with a \jh{rigorously interpretable} kernel method \sout{that is rigorously interpretable.}  Fractional diffusion maps are the analog of the diffusion maps for a fractional Laplacian, and we show that in the limit of large data it differs from the standard diffusion maps only by a scaling of the coordinates (for manifolds without boundary).  
}

%Recent developments in the theory of heat kernels on metric measure spaces give rise to a dichotomy in the classes of heat kernels \cite{AGrigoryan_TKumagai_2008a}. 

The primary goal of this manuscript is to expand the range of regularizations accessible through kernel methods to include the very powerful fractional Laplacian regularizers \cite{HAntil_SBartels_2017a,HAntil_CNRautenberg_2019a,HAntil_ZDi_RKhatri_2020a} for supervised learning of non-smooth functions. In the process, we are able to exhibit a fundamental dichotomy in the types of kernel functions based on their rate of decay, using recent advances in the understanding of heat kernels on manifolds \cite{AGrigoryan_TKumagai_2008a}.  This dichotomy helps delineates the range of regularizations accessible to kernel methods.} In particular, it was shown that in locally compact separable spaces, the so-called stochastically complete, $\beta$-scale invariant heat kernels are either \emph{local} (having exponential decay) or \emph{non-local} (having polynomial decay).  The class of local kernels includes the Gaussian kernel which yields an estimate of the heat kernel on the manifold with error bounds that depend on the curvature and its derivatives \cite{rosenberg}.  The Gaussian kernel was also used in the diffusion maps algorithm \cite{diffusion} to estimate the Laplace-Beltrami {operator, $\Delta$,} which is the generator of Brownian motion on a manifold. On the other hand, the class of nonlocal kernels includes the Poisson kernel which gives rise to the generator $(-\Delta)^{1/2}$ in $\mathbb{R}^n$. In fact, the class of heat kernels with polynomial decay in \cite{AGrigoryan_TKumagai_2008a} is associated with the fractional Laplacian $(-\Delta)^{\beta/2}$ for $\beta\in(0,2)$.  In other words, these nonlocal kernels generate the $\beta$-stable L\'evy processes in $\mathbb{R}^n$ \cite{grigor2003heat}.  {Every heat kernel on a manifold must be of either local or non-local type, so for a kernel-based manifold learning approach to have a well-defined limit the kernel used must converge to either a local or non-local heat kernel.}    

%The goal of this paper is to estimate both classes of semigroup operators arising in the dichotomy of \cite{AGrigoryan_TKumagai_2008a} using only samples of data that lie on or near an embedded manifold where neither the manifold nor the embedding function are explicitly known. 
In this paper we will develop consistent estimators for the two classes of semigroup operators arising in the dichotomy of \cite{AGrigoryan_TKumagai_2008a} using only samples of data that lie on or near an embedded manifold where neither the manifold nor the embedding function is explicitly known.  {The key difficulty is that since we do not have explicit knowledge of} the embedding (or, equivalently, the Riemannian metric) we cannot {directly} evaluate the geodesic distance on the {manifold.  Consequently, we cannot} directly evaluate either of the two classes of heat kernels in \cite{AGrigoryan_TKumagai_2008a}, which are both defined as functions of the geodesic distance. {We will show that in order to have a consistent estimator of a heat kernel, the method used for approximating the geodesic distances will differ based on whether the desired heat kernel is local or non-local.}

{For local kernels, we will show that geodesic distances do not need to be explicitly estimated.  We show this} by generalizing the theory of \cite{diffusion}, which showed that kernels with exponential decay localize the interactions between points so that the ambient Euclidean distance is sufficiently close to the geodesic {distance (up to an error proportional to the geodesic distance squared).}  This result was used in the diffusion maps algorithm to show that the Laplace-Beltrami operator on a manifold can be estimated by a weighted graph Laplacians (where the graph is constructed by connecting data points sampled on an embedding of the manifold in Euclidean space). If a data set is assumed to lie on or near an embedded manifold, the diffusion maps result provides a rigorous foundation for so-called `kernel methods' used for unsupervised learning algorithms and dimensional reduction. In particular, it approximates the semigroup associated with the Laplace-Beltrami operator with a discretization of an integral operator, defined with any kernel functions that decay to zero exponentially as the distance between data points increases. It was shown in \cite{localkernels} that this class of so-called ``local kernels" allows one to estimate non-symmetric Kolmogorov operators defined with respect to the Riemannian metric inherited by the manifold from the ambient space. 

For non-local heat kernels, we cannot use the Euclidean distance of the embedded data to approximate the geodesic distance since this approximation is only valid locally. In this case, we consider the 
graph distance as an estimator for the geodesic distance, which is the idea behind the Isomap algorithm \cite{tenenbaum2000global,MBernstein_VDSilva_JCLangford_JBTenenbaum_2000a}. Numerically, the graph distance will be computed using Dijkstra's algorithm which finds the shortest path between the training data points. Using the error estimate for the geodesic distance approximation that was formulated in \cite{MBernstein_VDSilva_JCLangford_JBTenenbaum_2000a}, we derive error bounds for nonlocal kernels and their associated semigroups and generators.  { \comment{We note that improved geodesic distance estimators \cite{PTU} can also be used in practice. , but for the error bounds in \cite{MBernstein_VDSilva_JCLangford_JBTenenbaum_2000a} are required for our analysis.} We should note that since we require graph distances between all pairs of points, Dijkstra's algorithm may be improved by Johnson's algorithm which requires $\mathcal{O}(N^2 \log N)$ computations, where $N$ is the number of data points.  Since local kernels do not require this step, this shows that local kernels have an advantage in computational complexity.  On the other hand, while polynomial kernels are sometimes used in practice without estimating geodesic distances, our results show that these will not have well-defined heat kernel limits without this additional step.
}

The remainder of this paper will be organized as follows: In Section~\ref{section2}, we will briefly review the relevant results from \cite{AGrigoryan_TKumagai_2008a} which serves as the foundation for this work. In Section~\ref{section3}, we derive the theory for estimating the semigroup operators in \cite{AGrigoryan_TKumagai_2008a} using data sampled from an embedded manifold. In Section~\ref{application}, we provide a detailed numerical algorithm including the necessary modifications for application with non-local kernels. In Section~\ref{numerics}, we provide numerical examples to support the theoretical results deduced in Section~\ref{section3}. {In addition, we also compare the local and non-local heat kernels as regularizers in a kernel ridge regression application.} In Section~\ref{conclusion}, we close the paper with a summary and outlook of open problems. { \comment{We should note that throughout this paper we assume that the manifold is compact. In addition, we will specify whenever we need the manifold to be closed. Generalizing beyond these assumptions is an important direction for future research. In particular, the heat kernel dichotomy of \cite{AGrigoryan_TKumagai_2008a} holds on a much larger class of structures known as metric measure spaces.}  In order to help guide future research in this direction we will occasionally comment on considerations that may become relevant to such generalizations.}

%%%%%%%%%%%%%%%%%%%%%%%%%%%%%%%%%%%%%%%%%%%%%%%%%%%%%%%
\section{Notation and preliminaries}\label{section2}
%%%%%%%%%%%%%%%%%%%%%%%%%%%%%%%%%%%%%%%%%%%%%%%%%%%%%%%

%%%%%%%%%%%%%%%%%%%%%%%%%%%%%%%%%%%%%%%%%%%%%%%%%%%%%%%
\subsection{Notation}\label{notation}
%%%%%%%%%%%%%%%%%%%%%%%%%%%%%%%%%%%%%%%%%%%%%%%%%%%%%%%

We will denote the Euclidean norm by $|\cdot|$ and reserve $\|\cdot\|$ for function space norms. Throughout this paper we will consider a compact $C^3$ Riemannian manifold $\mathcal{M}$, isometrically embedded into a Euclidean space by a $C^3$ function $\iota:\mathcal{M}\to\mathbb{R}^n$.  Equivalently, given an arbitrary embedding, we may say that we are interested in the Riemannian geometry, $g$, inherited from the embedding.  

We will denote by $d_g(x,y)$ the geodesic distance which is identical to the intrinsic distance on a compact manifold, $\mathcal{M}$, given by the infimum over piecewise differentiable paths $\gamma:[0,1]\to \mathcal{M}$ between $\gamma(0)=x$ and $\gamma(1) =y$
\[ d_g(x,y) = \inf_{\gamma} \int_0^1 \sqrt{g_{\gamma(t)}(\nabla\gamma(t),\nabla\gamma(t))} \, dt. \]
Since the embedding $\iota$ is continuous and $\mathcal{M}$ is compact, the ratio between the Euclidean distance in the embedding space and the geodesic distance, 
\[ R(x,y) = \frac{|\iota(x)-\iota(y)|}{d_g(x,y)}, \]
is bounded away from zero.  Moreover, when $y$ is sufficiently close to $x$ we have the following relationship between the geodesic distance and the Euclidean distance in the embedding space. 
\begin{lemma}[Distance comparison]\label{distcomp} Let $x,y\in \mathcal{M}$ with $d_g(x,y)$ less than the injectivity radius at $x$ and let $\iota:\mathcal{M}\to\mathbb{R}^n$ be an isometric embedding, then
\begin{equation}\label{distances} |\iota(y)-\iota(x)|^{\alpha} = d_g(x,y)^{\alpha} + \mathcal{O}(d_g(x,y)^{\alpha+2}), \end{equation}
for any $\alpha > 0$.
\end{lemma}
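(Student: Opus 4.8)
\emph{Proof sketch.} The plan is to first prove the statement for $\alpha=2$ and then deduce the general case $\alpha>0$ by a scalar Taylor expansion of $t\mapsto t^{\alpha/2}$. For $\alpha=2$ I would fix $x,y$ with $r:=d_g(x,y)$ below the injectivity radius at $x$, so that there is a unique unit-speed minimizing geodesic $\gamma\colon[0,r]\to\mathcal M$ with $\gamma(0)=x$ and $\gamma(r)=y$, and set $f:=\iota\circ\gamma\colon[0,r]\to\RR^n$. Because $\iota$ is an isometric embedding, $f$ has unit speed, $|f'(t)|\equiv1$; because $\gamma$ is a geodesic, the Euclidean acceleration of $f$ is purely normal to $\iota(\mathcal M)$, namely $f''(t)=\mathrm{II}_{\gamma(t)}\bigl(\gamma'(t),\gamma'(t)\bigr)$ with $\mathrm{II}$ the second fundamental form of $\iota$ (Gauss formula together with $\nabla_{\gamma'}\gamma'=0$). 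Taylor-expanding $f$ at $t=0$ with a third-order remainder gives
\[
\iota(y)-\iota(x)=f(r)-f(0)=r\,f'(0)+\tfrac{r^2}{2}f''(0)+R_3,\qquad |R_3|\le\tfrac{r^3}{6}\sup_{[0,r]}|f'''|.
\]

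Squaring, the only contribution of order $r^3$ is $r^3\langle f'(0),f''(0)\rangle$, and this vanishes: differentiating $|f'|^2\equiv1$ yields $\langle f',f''\rangle\equiv0$ (equivalently, $f'(0)$ is tangent and $f''(0)$ is normal to $\iota(\mathcal M)$). Hence $|\iota(y)-\iota(x)|^2=r^2+\mathcal O(r^4)$, i.e. $|\iota(y)-\iota(x)|^2=d_g(x,y)^2+\mathcal O(d_g(x,y)^4)$. To pass to general $\alpha>0$, write $|\iota(y)-\iota(x)|^2=d_g(x,y)^2(1+u)$ with $u=\mathcal O(d_g(x,y)^2)$; once $d_g(x,y)$ is small enough that $|u|\le\tfrac12$, the expansion $(1+u)^{\alpha/2}=1+\mathcal O(|u|)$, valid uniformly for $|u|\le\tfrac12$, gives $|\iota(y)-\iota(x)|^\alpha=d_g(x,y)^\alpha\bigl(1+\mathcal O(d_g(x,y)^2)\bigr)=d_g(x,y)^\alpha+\mathcal O(d_g(x,y)^{\alpha+2})$, which is the claim.

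The one point that actually needs care — and the reason the regularity and compactness hypotheses appear — is the uniformity of the constant hidden in $\mathcal O(d_g(x,y)^{\alpha+2})$. It is governed by $\sup|f''|$ and $\sup|f'''|$ over all unit-speed geodesics, hence by $\|\mathrm{II}\|_{C^0}$ and $\|\nabla\mathrm{II}\|_{C^0}$ (and the ambient curvature of the embedding), all of which are finite on the compact $\mathcal M$; moreover $\mathcal M$ (and $\iota$) being $C^3$ is exactly what makes $\mathrm{II}$ of class $C^1$ and $f$ of class $C^3$, so that the third-order remainder $R_3$ is legitimate and bounded, and it also guarantees a positive lower bound $\rho>0$ on the injectivity radius so that the conclusion holds uniformly on $\{(x,y):d_g(x,y)<\rho\}$. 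I expect this bookkeeping, rather than the identity itself (the classical fact that a chord approximates its arc to second order), to be the main obstacle; the existence and uniqueness of the minimizing geodesic inside the injectivity radius and its normal-coordinate description are standard Riemannian geometry.
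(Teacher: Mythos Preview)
Your proof is correct and follows essentially the same approach as the paper's: Taylor-expand the embedding along the geodesic from $x$ to $y$, use that the second-order term (the second fundamental form) is normal to the tangent space so the cubic cross term vanishes, obtain the case $\alpha=2$, and then pass to general $\alpha$ by raising to the power $\alpha/2$. The only cosmetic difference is that the paper parametrizes via full normal coordinates $s\mapsto\iota(\exp_x(s))$ rather than a single geodesic curve, and it is somewhat terser about the uniformity of the $\mathcal{O}$-constants, which you spell out more carefully.
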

The proof of Lemma \ref{distcomp} is in \ref{proofs}.  Equation \eqref{distances} will be the key to connecting certain integral operators with kernels defined in the embedding space to intrinsic heat kernels.  Finally, since the injectivity radius on $\mathcal{M}$ is bounded away from zero, on all sufficiently small balls, we have $d_g(x,y)  < c|\iota(x)-\iota(y)|$ since $R(x,y)$ is bounded away from zero, where $c > 0$ is a positive constant.  Thus from \eqref{distances}, we have 
\begin{equation}\label{distance2}  |\iota(y)-\iota(x)|^{\alpha} = d_g(x,y)^{\alpha} + \mathcal{O}(|\iota(y)-\iota(x)|^{\alpha+2}). \end{equation}
for $d_g(x,y)$ sufficiently small.

%%%%%%%%%%%%%%%%%%%%%%%%%%%%%%%%%%%%%%%%%%%%%%%%%%%%%%%
\subsection{Dichotomy in heat kernel}\label{heatkerneldichotomy}
%%%%%%%%%%%%%%%%%%%%%%%%%%%%%%%%%%%%%%%%%%%%%%%%%%%%%%%

In this section we briefly summarize a powerful result of \cite{AGrigoryan_TKumagai_2008a} which will form the cornerstone of this paper.  We should note that these results hold for locally compact separable metric spaces \cite{AGrigoryan_TKumagai_2008a}, however we will restrict our attention to Riemannian manifolds $(\mathcal{M},g)$ here.  We first state the definition of a heat kernel following \cite{AGrigoryan_TKumagai_2008a}.

\begin{definition}[Heat kernel] A function $k : [0,\infty) \times \mathcal{M} \times \mathcal{M} \to [0,\infty)$ is called a heat kernel if for almost every $x,y \in \mathcal{M}$ and all $s,t \geq 0$ we have
\begin{enumerate}
\item Positivity: $k(t,x,y) \geq 0$.
\item Total mass inequality: $\int_{y\in\mathcal{M}}k(t,x,y) \, d\textup{vol} \leq 1$.
\item Symmetry: $k(t,x,y)=k(t,y,x)$.
\item Semi-group: $k(s+t,x,y) = \int_{z\in\mathcal{M}}k(s,x,z)k(t,z,y) \, d\textup{vol}$.
\item Approximation of identity:\[\lim_{t\to 0^+} \left|\left| \int_{y\in\mathcal{M}} k(t,x,y)f(y)\, d\textup{vol} - f(x) \right|\right|_{L^2(\mathcal{M},g)} = 0, \mbox{ for } f\in L^2(\mathcal{M},g).\] 
\end{enumerate}
\end{definition}

A heat kernel gives rise to an associated semigroup
\BEA \mathcal{K}_t f(x) = \int_{y\in\mathcal{M}}k(t,x,y)f(y)\, d\textup{vol} \label{semigroup}\EEA
and we say that $k$ is \emph{stochastically complete} if $\mathcal{K}_t 1 = 1$ for all $t>0$.  The semigroup gives rise to a quadratic form 
\[ \xi(f) = \lim_{t\to 0^+} \left<\frac{f - \mathcal{K}_t f}{t},f \right>_{L^2(\mathcal{M},g)} \]
and a generator $\mathcal{L}f = \lim_{t\to 0^+} \frac{f - \mathcal{K}_t f}{t}$, provided that the limit holds in $L^2(\mathcal{M},g)$.  We say that $\xi$ is \emph{regular} if there exists a set $\mathcal{C}$ of continuous functions with compact support ($C_c(\mathcal{M}))$ that are also in the domain, $\mathcal{D}(\xi)$, of $\xi$ (in other words $\mathcal{C} \subset C_c(\mathcal{M})\cap \mathcal{D}(\xi)$) such that $\mathcal{C}$ is dense both in $C_c(\mathcal{M})$ and $ \mathcal{D}(\xi)$ under appropriate norms (see \cite{AGrigoryan_TKumagai_2008a}).  The results developed in \cite{AGrigoryan_TKumagai_2008a} connect a large class of heat kernels to generators of important Markov processes.  Namely, we only need to assume that our heat kernel has the following type of scale invariance.

\begin{definition}[$\beta$-scale invariant] 
\label{def:scaleinvariance}
We say that a heat kernel is $\beta$-scale invariant if for almost every $x,y\in\mathcal{M}$ and every $t> 0$ it satisfies
\begin{equation}\label{scaleinvariance}
 \frac{c_1}{t^{d/\beta}} \Phi\left(C_1 \frac{d_g(x,y)}{t^{1/\beta}} \right) 
  \le k(t,x,y) 
  \le \frac{c_2}{t^{d/\beta}} \Phi\left(C_2 \frac{d_g(x,y)}{t^{1/\beta}} \right)  
\end{equation}
where $\beta, c_1, c_2,C_1,C_2$ are positive constants and $\Phi : [0,\infty) \rightarrow [0,\infty)$ is monotone decreasing.
\end{definition}

Here, $\mathcal{M}$ is a $d$-dimensional manifold (the result in \cite{AGrigoryan_TKumagai_2008a} also applies to fractional dimensional sets).  Then we have the following result [Theorem 4.1 of \cite{AGrigoryan_TKumagai_2008a}].

\begin{theorem}[Heat kernel dichotomy \cite{AGrigoryan_TKumagai_2008a}]\label{heatdichotomy} Let all { balls, defined with metric $d_g$ in $\mathcal{M}$,} to be relatively compact and $k(t,x,y)$ be a stochastically complete heat kernel with $\beta$-scale invariant such that the associated quadratic form is regular.  Then $\beta \leq d+1$ and either
\begin{enumerate}
\item $k$ is local, meaning $\beta \geq 2$ and \eqref{scaleinvariance} holds with $\Phi(a) = \exp\left({-a^{\frac{\beta}{\beta-1}}}\right)$ or
\item $k$ is non-local, meaning $0<\beta < 2$ and \eqref{scaleinvariance} holds with $\Phi(a) = (1+a)^{-(d+\beta)}$.
\end{enumerate}
\end{theorem}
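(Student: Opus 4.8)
This statement is Theorem~4.1 of \cite{AGrigoryan_TKumagai_2008a}; as it is quoted verbatim, I only outline the skeleton of the argument one would reconstruct. The plan is to convert the two-sided bound \eqref{scaleinvariance} into geometric information about $(\mathcal{M}, d_g, \mathrm{vol})$, then pass to the Dirichlet form generated by $\mathcal{L}$, and finally read off the dichotomy from its Beurling--Deny decomposition together with the short-time behaviour of $k$.

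First I would extract the geometry. Setting $y=x$ in \eqref{scaleinvariance} gives the on-diagonal bound $k(t,x,x)\asymp t^{-d/\beta}$. Integrating the lower bound over $B(x,t^{1/\beta})$ — on which $d_g(x,y)\le t^{1/\beta}$, so $\Phi$ is bounded below — and comparing with the total-mass inequality yields $\mathrm{vol}(B(x,r))\lesssim r^{d}$; integrating the upper bound over the exterior of a large ball and using stochastic completeness $\mathcal{K}_t 1=1$ together with the decay of $\Phi$ forces the near-diagonal mass above a constant, hence $\mathrm{vol}(B(x,r))\gtrsim r^{d}$. So $(\mathcal{M},d_g,\mathrm{vol})$ is Ahlfors $d$-regular and volume doubling holds; these become the standing structural hypotheses.

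Next I would pass to the form. Since $\xi$ is assumed regular it is a regular Dirichlet form, so I would invoke the Beurling--Deny decomposition
\[
 \xi(f) = \xi^{(c)}(f) + \int\!\!\int_{\mathcal{M}\times\mathcal{M}} \big(f(x)-f(y)\big)^{2}\, J(dx,dy) + \int_{\mathcal{M}} f(x)^{2}\, \kappa(dx),
\]
with $\xi^{(c)}$ strongly local, $J$ a jump measure and $\kappa$ a killing measure; stochastic completeness forces $\kappa\equiv 0$. The key split is whether $J$ vanishes: via the L\'evy system, $J(x,y)$ is governed by the short-time asymptotics of $t^{-1}k(t,x,y)$, which by \eqref{scaleinvariance} behaves like $t^{-1-d/\beta}\Phi\big(C\,d_g(x,y)\,t^{-1/\beta}\big)$, and a short computation of exponents shows this tends to $0$ for $x\ne y$ when $\Phi$ decays faster than every power (so $J\equiv 0$ and $\xi=\xi^{(c)}$ is strongly local) and tends to $c\,d_g(x,y)^{-(d+\beta)}$ when $\Phi$ decays polynomially of order $d+\beta$. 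In the strongly local case I would use that $\beta$-scale invariance of a strongly local form is equivalent to the parabolic Harnack inequality and to the two-sided sub-Gaussian estimate (Barlow--Bass, Grigor'yan--Hu--Lau); the standard chaining argument then pins down $\Phi(a)\asymp\exp(-a^{\beta/(\beta-1)})$, and the walk dimension of a continuous-path form being at least $2$ gives $\beta\ge 2$. In the non-local case I would use that $\int\!\!\int(f(x)-f(y))^{2}J(dx,dy)<\infty$ for Lipschitz compactly supported $f$ forces $\int_0^1 r^{1-\beta}\,dr<\infty$, i.e. $\beta<2$, and combine $J(x,y)\asymp d_g(x,y)^{-(d+\beta)}$ with the Chen--Kumagai stable-like heat kernel estimates to conclude $\Phi(a)\asymp(1+a)^{-(d+\beta)}$. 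The bound $\beta\le d+1$ is immediate when $\beta<2\le d+1$, and in the local case I would obtain it from an effective-resistance estimate across dyadic annuli (the resistance of a $d$-dimensional annulus of radius $r$ scales like $r^{\beta-d}$ and must be $\lesssim r$), giving $\beta-d\le 1$.

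The hard part is not either explicit regime but the rigidity ruling out everything between them: one must show $\xi$ cannot be a genuine mixture of a strongly local part and a jump part, and that once a jump is present its tail is forced to be a power of exactly the exponent $d+\beta$ rather than any other polynomial or any intermediate rate. This self-improvement is where the scale invariance \eqref{scaleinvariance} and the regularity of $\xi$ do the real work, and it is the step for which one must lean on the detailed estimates of \cite{AGrigoryan_TKumagai_2008a}.
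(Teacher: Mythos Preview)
The paper does not prove this theorem at all; it simply quotes it as Theorem~4.1 of \cite{AGrigoryan_TKumagai_2008a} and uses it as a black box. You correctly identified this in your opening sentence. There is therefore no ``paper's own proof'' to compare your sketch against, and your outline of the Grigor'yan--Kumagai argument (Ahlfors regularity from the two-sided bound, Beurling--Deny decomposition of the regular Dirichlet form, the local/non-local split via the jump measure, and the respective sub-Gaussian and stable-like estimates) is a reasonable high-level summary of how that reference proceeds.
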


We can now connect the $\beta$-scale invariant heat kernels to the intrinsic fractional Laplacian operators on a compact manifold. 
Let $\Delta$ denote the intrinsic (negative semi-definite) Laplacian-Beltrami operator. 
Since $\mathcal{M}$ is compact \cite[Theorem~3.2.1]{JJost_2017a}, the eigenvalue problem $-\Delta \phi_i = \lambda_i \phi_i$ has countably many eigenvalues with orthonormal eigenfunctions in $L^2(\mathcal{M})$. The eigenvalues fulfill 
$0 = \lambda_1 < \lambda_2 \le \lambda_3 \le \dots$ with $\lim_{i\rightarrow \infty} \lambda_i = \infty$. Moreover, the eigenfunction $\phi_1$ corresponding to $\lambda_1$ is a constant. Finally for any $f \in L^2(\mathcal{M})$ we have 
$f(x) = \sum_{i=1}^\infty \left<f,\phi_i\right>_{L^2(\mathcal{M})} \phi_i(x)$. For a closed manifold $\mathcal{M}$, one can define the spectral fractional Laplacian as:

\begin{definition}[fractional Laplacian on closed manifolds]
\label{specdef}  
Let $\mathcal{M}$ be a compact manifold without boundary. The intrinsic fractional Laplacian operator $(-\Delta)^{s}$ is the generator of the semigroup $e^{-t(-\Delta)^s}$ with heat kernel
\[ G_{s}(t,x,y) = \sum_{i=1}^{\infty} e^{-t \lambda_i^{s}}\phi_i(x)\phi_i(y)  \hspace{40pt} e^{-t(-\Delta)^s}f(x) = \int_{\mathcal{M}} G_s(t,x,y)f(y) \, d\textup{vol} \]
and the fractional Laplacian can be written as 
\[ (-\Delta)^{s}f(x) = \sum_{i=1}^{\infty} \lambda_i^{s}\left<f,\phi_i\right>_{L^2(\mathcal{M})}\phi_i(x) \] 
for $f$ in the domain of $(-\Delta)^s$.  
\end{definition}

In $\mathbb{R}^n$ and on a flat torus, the spectral fractional Laplacian coincides with the integral definition (see e.g.~\cite{kwasnicki2017ten} or \cite[Pg.~15]{NAbatangelo_EValdinoci_2017a}).
For general compact manifolds without boundary, the fractional Laplacian can be approximately represented in a Cauchy Principal Value integral form (see \cite[Theorem~1.2 and Theorem~1.4]{alonso2018integral} for the exact error term of this representation). For equivalent definitions of the fractional Laplacian, some of which generalize to non-compact manifolds see \cite{VBanica_MadMarGonzalez_MSaez_2015a,SMMolcanov_EOstrovski_1969a,LCaffarelli_LSilvestre_2007a,PRStinga_JLTorrea_2010a,antil2018fractional,antilshort,YSire_EValdinoci_2010a,SYAChang_MadMarGonzalez_2011a,MadMarGonzalez_MSaez_YSire_2014a}.

For compact manifolds, the standard ($s=1$) heat kernel has the expansion  (see for example \cite{rosenberg})
\[ G_1(t,x,y) = (4\pi t)^{-d/2} \exp\left( -\frac{d_g(x,y)^2}{4t} \right)(1 + \mathcal{O}(t)) \] 
 which agrees with the $\beta$-scale invariant  kernel with $\beta=2$, $c_1=c_2$, and $C_1=C_2$ (again up to a constant term and rescaling time) in the limit as $t\to 0$.  Furthermore, \cite[Theorem 4.2]{gimperlein2014heat} shows that for any compact manifold and any $0<s<1$, we have
\BEA\label{gimperleinBounds} \frac{c_1}{t^{d/(2s)}}\left(\frac{d_g(x,y)}{t^{1/(2s)}} + 1\right)^{-(d+2s)} + \mathcal{O}(t) &\leq& G_s(t,x,y) \nonumber\\ &\leq& \frac{c_2}{t^{d/(2s)}}\left(\frac{d_g(x,y)}{t^{1/(2s)}} + 1\right)^{-(d+2s)} + \mathcal{O}(t). \nonumber  \EEA
Setting $\beta = 2s$ this result shows that for all $0 <\beta < 2$ as $t\to 0$ the $\beta$-scale invariant kernel in Definition \ref{def:scaleinvariance} with $c_1=c_2$ and $C_1=C_2$ recovers the heat kernel $G_{\beta/2}$ associated to the fractional Laplacian $(-\Delta)^{\beta/2}$.  

Finally, we should note that if $C_1 \neq C_2$ then the operator obtained can be very different.  For example, in \cite{localkernels} they consider kernels of the form $e^{-(x-y)^\top A(x)(x-y)}$ where the symmetric matrix $A(x)$ varies as a function of $x$.  Notice that as long as the eigenvalues of $A(x)$ are bounded away from zero and infinity for all $x$, inequalities \eqref{scaleinvariance} (with $\beta=2$) hold with $C_1$ and $C_2$ given by infimum and supremum of the eigenvalues respectively.  However, in \cite{localkernels} these kernels are shown to change the differential operator that is estimated, or (equivalently) they can be viewed as changing the Riemannian metric on the manifold $\mathcal{M}$.  The theory of \cite{localkernels} is developed for manifolds without boundary, but the estimates can be extended to manifolds with boundary by restricting to Neumann functions as in \cite{diffusion}.
 
We now turn to the theoretical and practical implications of Theorem \ref{heatdichotomy} for manifold learning applications.

%%%%%%%%%%%%%%%%%%%%%%%%%%%%%%%%%%%%%%%%%%%%%%%%%%%%%%%
\section{Dichotomy in diffusion maps}\label{section3}
%%%%%%%%%%%%%%%%%%%%%%%%%%%%%%%%%%%%%%%%%%%%%%%%%%%%%%%

The dichotomy in the heat kernel is also reflected in the method that can be used in order to obtain an estimate of the heat kernel from embedded data. The key point is that given data sampled from an embedding $\iota(\mathcal{M})\subset\mathbb{R}^n$ we will only have direct access to the ambient Euclidean metric, rather than the intrinsic metric required for the construction of the heat kernel in the previous section.  In order to obtain an intrinsic operator on the data, we will need to follow very different strategies based on which type of operator we are approximating. 

For local diffusions, we will be able to simply use a kernel-based on the Euclidean distance in the embedding space, and asymptotically (as $t\to 0$) we will recover the intrinsic heat kernel.  On the other hand, for nonlocal processes we will have to explicitly estimate the intrinsic distance using Dijkstra's algorithm in order to obtain a consistent estimator of the intrinsic heat kernel.  These different approaches will have significant consequences for both computational efficiency and quality of results as we will show in Section \ref{numerics}.  We first consider local processes in section \ref{local} and then turn to nonlocal processes in section \ref{nonlocal}.

\subsection{$\alpha$-local kernels}\label{local}

In this section we consider the local processes and show that they are generated by a class of kernel functions which are defined on the ambient space $\mathbb{R}^n$.  The class of kernels will be called $\alpha$-local. 

\begin{definition}[$\alpha$-local kernels] Let $h : [0,\infty) \times \mathcal{M} \times \mathcal{M} \to [0,\infty)$ then we say that $h$ is an $\alpha$-local kernel if $\alpha > 1$ and if for some isometric embedding $\iota:\mathcal{M}\to\mathbb{R}^n$ we have
\[ c_1 e^{-\frac{|\iota(x)-\iota(y)|^{\alpha}}{t^{\alpha-1}}} \leq h(t,x,y) \leq c_2 e^{- \frac{|\iota(x)-\iota(y)|^{\alpha}}{t^{\alpha-1}}} \]
for some $c_1,c_2 > 0$, and for all $x,y \in \mathcal{M}$.\label{alphalocalkernel}
\end{definition}  
Notice that the $\alpha$-local kernels are defined on the intrinsic manifold $\mathcal{M}$ but instead of being intrinsically bounded above and below, they are bounded in an embedding space $\mathbb{R}^n$ in terms of the ambient metric $|\cdot|$.  The reason for this is that although we are interested in an intrinsic kernel defined on the manifold, we assume that we only have access to data points which are in the Euclidean space $\mathbb{R}^n$ which is the image of $\iota$.  For example a typical kernel would be of the form,
\BEA h(t,x,y) = e^{-|\iota(x)-\iota(y)|^2/t}. \label{kernelh}\EEA
The $\alpha$-local kernels (denoted by $h$) should be clearly differentiated from the intrinsic heat kernels (denoted by $k$) which have the form,
\BEA
k(t,x,y) := ct^{-d/\beta} e^{-\left(\frac{d_g(x,y)}{t^{1-1/\alpha}}\right)^{\alpha}} = ct^{-d/\beta} e^{-\left(\frac{d_g(x,y)}{t^{1/\beta}}\right)^{ \frac{\beta}{\beta-1}}} = ct^{-d/\beta} \Phi\left( \frac{d_g(x,y)}{t^{1/\beta}} \right) \nonumber
\EEA 
for $1+\frac{1}{d}<\alpha\leq 2$ where $\beta = \frac{\alpha}{\alpha-1}$ and $\Phi(a) = \exp\left(-a^{\frac{\beta}{\beta-1}}\right)$ as in Theorem.~\ref{heatdichotomy}(a).

We call these kernels $\alpha$-local because the exponential decay implies fast decay of the tails.  Crucially, this decay is so fast that if we integrate outside of ball of radius $\frac{1}{t^\gamma}$ for \emph{any} $\gamma>0$, then as $t\to 0^+$ the integral outside this ball will decay to zero faster than any power of $t$.
\begin{lemma}[Fast decay of exponential tails] \label{fastdecay} Let $c,\gamma>0$ and $\alpha \geq 1$ then 
\[  \lim_{t\to 0^+} t^\ell \int_{|z|>t^{-\gamma}} e^{-c |z|^{\alpha}} \, dz = 0   \]
for every $\ell\in \mathbb{R}$.
\end{lemma}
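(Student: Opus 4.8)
The plan is to switch to polar-type coordinates in $\mathbb{R}^n$ and reduce everything to a one-dimensional tail integral, then estimate that tail by a crude but effective bound. First I would write, for $z\in\mathbb{R}^n$ with $|z|=r$,
\[
\int_{|z|>t^{-\gamma}} e^{-c|z|^{\alpha}}\,dz = \omega_{n-1}\int_{t^{-\gamma}}^{\infty} e^{-cr^{\alpha}} r^{n-1}\,dr,
\]
where $\omega_{n-1}$ is the surface area of the unit sphere $S^{n-1}$ (with the convention that the $n=1$ case just contributes the two half-lines, which is handled identically up to a constant). So it suffices to show that $t^{\ell}$ times this radial integral tends to $0$ as $t\to 0^{+}$, for every $\ell\in\mathbb{R}$.

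Next I would bound the radial tail. Since $\alpha\geq 1$, for $r\geq 1$ we have $r^{\alpha}\geq r$, so $e^{-cr^{\alpha}}\leq e^{-cr}$ once $r\geq 1$; hence for $t$ small enough that $t^{-\gamma}\geq 1$,
\[
\int_{t^{-\gamma}}^{\infty} e^{-cr^{\alpha}} r^{n-1}\,dr \;\leq\; \int_{t^{-\gamma}}^{\infty} e^{-cr} r^{n-1}\,dr.
\]
The right-hand side is an incomplete Gamma integral; integrating by parts $n-1$ times (or simply using that $e^{-cr}r^{n-1}$ is eventually decreasing and the standard asymptotic $\int_{M}^{\infty} e^{-cr} r^{n-1}\,dr \sim c^{-1}M^{n-1}e^{-cM}$ as $M\to\infty$) gives a constant $C=C(c,n)$ and an $M_0$ such that for $M\geq M_0$,
\[
\int_{M}^{\infty} e^{-cr} r^{n-1}\,dr \;\leq\; C\, M^{n-1} e^{-cM/2}.
\]
Taking $M=t^{-\gamma}$, we obtain for all sufficiently small $t>0$
\[
t^{\ell}\int_{|z|>t^{-\gamma}} e^{-c|z|^{\alpha}}\,dz \;\leq\; \omega_{n-1}\,C\, t^{\ell}\, t^{-\gamma(n-1)}\, e^{-\frac{c}{2} t^{-\gamma}} \;=\; \omega_{n-1}\,C\, t^{\ell-\gamma(n-1)}\, e^{-\frac{c}{2} t^{-\gamma}}.
\]

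Finally I would send $t\to 0^{+}$. Write $u = t^{-\gamma}\to\infty$, so $t = u^{-1/\gamma}$ and the bound becomes $\omega_{n-1}C\, u^{-(\ell-\gamma(n-1))/\gamma} e^{-cu/2}$. For any fixed real exponent, a power of $u$ is dominated by $e^{cu/2}$ as $u\to\infty$, so this expression tends to $0$; since $c,\gamma>0$ this holds regardless of the sign of $\ell$. This proves the claim. I do not anticipate a genuine obstacle here — the content is entirely that exponential decay beats any polynomial factor — so the only things to be careful about are the reduction to a radial integral (handling $n=1$ as a trivial special case) and ensuring the constants $C$, $M_0$ depend only on $c$ and $n$ and not on $t$ or $\ell$, which is the case since the $\ell$-dependence sits only in the explicit prefactor $t^{\ell}$.
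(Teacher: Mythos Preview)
Your proof is correct. The content is, as you say, just that exponential decay beats any polynomial, and your execution is clean: the reduction to a radial integral via polar coordinates is the natural move since the integrand depends only on $|z|$, the bound $e^{-cr^\alpha}\le e^{-cr}$ for $r\ge 1$ uses $\alpha\ge 1$ exactly where it should, and the incomplete Gamma tail estimate is standard and correctly applied.

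The paper takes a genuinely different route. Rather than going to polar coordinates, it performs a componentwise substitution $w_i=y_i^{\alpha/2}$, which converts $|y|^\alpha$ into the square of an $\ell^{4/\alpha}$ norm of $w$; it then invokes equivalence of norms to compare with the Euclidean norm, enlarges the domain of integration from the exterior of a ball to the exterior of a cube, and factors the resulting integral as a product of one-dimensional integrals $\int w_i^{2/\alpha-1}e^{-c_1 w_i^2}\,dw_i$, each of which is bounded explicitly. Your polar-coordinate argument is more direct and avoids the norm-equivalence and box-versus-ball maneuvers; the paper's approach, by contrast, never needs the incomplete Gamma asymptotics and ends with a fully explicit bound $c_3 e^{-c_4 t^{-\gamma\alpha}}$. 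Both arrive at the same conclusion with comparable effort, but yours is the more natural proof given the radial symmetry of the problem.
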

The proof of Lemma \ref{fastdecay} is straightforward and is included in \ref{proofs} for completeness. 
Next, we show that for $\alpha$-local kernels we can localize the integral operator to a neighborhood of $x$ where $|y-x|<t^{1-1/\alpha}$.  We do this by showing that the integral outside that region decays faster than any polynomial in $t$.

\begin{lemma}[Localization of $\alpha$-local kernels]\label{localization}  Let $h(t,x,y)$ be an $\alpha$-local kernel for some $\alpha>1$.  For any $x,y \in \mathcal{M}$ and any $f \in L^p(\mathcal{M})$ where $p \in (1,\infty]$ we have
\[  \lim_{t\to 0^+} t^\ell \int_{y \in \mathcal{M}, |\iota(y)-\iota(x)|>t^{1-1/\alpha-\gamma}} h(t,x,y)f(y)\, d\textup{vol} = 0 \]
for each $\gamma > 0$ and for any $\ell \in \mathbb{R}$.
\end{lemma}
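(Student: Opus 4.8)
The goal is to show that for an $\alpha$-local kernel, the portion of the integral operator over points $y$ with $|\iota(y)-\iota(x)| > t^{1-1/\alpha-\gamma}$ vanishes faster than any polynomial in $t$ as $t \to 0^+$. The natural strategy is to reduce this to the fast-decay estimate already established in Lemma~\ref{fastdecay} by a change of variables that moves the problem from the manifold into Euclidean space. First I would bound the kernel using the upper bound in Definition~\ref{alphalocalkernel}, so that the integrand is dominated by $c_2\, e^{-|\iota(x)-\iota(y)|^{\alpha}/t^{\alpha-1}} |f(y)|$. The constraint $|\iota(y)-\iota(x)| > t^{1-1/\alpha-\gamma}$ is exactly the region where this Gaussian-type factor is extremely small.

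\textbf{Key steps.} (1) Apply H\"older's inequality to separate $f$ from the kernel: since $f \in L^p(\mathcal{M})$ with $p \in (1,\infty]$, write $\|f\|_{L^p}$ times the $L^{p'}$-norm of the kernel over the truncated region, where $p'$ is the conjugate exponent (if $p = \infty$, just pull $\|f\|_{L^\infty}$ out and estimate the $L^1$-norm of the kernel). Because $\mathcal{M}$ is compact, $\|f\|_{L^p(\mathcal{M})} < \infty$ and this is harmless. (2) Estimate $\left(\int_{|\iota(y)-\iota(x)|>t^{1-1/\alpha-\gamma}} e^{-p'|\iota(x)-\iota(y)|^{\alpha}/t^{\alpha-1}}\, d\textup{vol}\right)^{1/p'}$. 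To do this, cover $\mathcal{M}$ by finitely many coordinate charts (possible by compactness); in each chart the volume form is comparable to Lebesgue measure and, using the distance comparison from Lemma~\ref{distcomp} together with \eqref{distance2} (so that geodesic and ambient distances are comparable up to constants on small scales, and globally $R(x,y)$ is bounded away from zero), the region $|\iota(y)-\iota(x)| > t^{1-1/\alpha-\gamma}$ corresponds in local coordinates to a set contained in $\{|z| > c\, t^{1-1/\alpha-\gamma}\}$ for some constant $c>0$. (3) Substitute $z = w\, t^{1-1/\alpha-\gamma}$... actually the cleaner substitution is to write the exponent as $-p'\big(|\iota(x)-\iota(y)|/t^{1-1/\alpha}\big)^{\alpha}$ and set $u = (\iota(y)-\iota(x))/t^{1-1/\alpha}$ (in chart coordinates), which turns the integral into $t^{d(1-1/\alpha)} \int_{|u| > c\, t^{-\gamma}} e^{-p' |u|^{\alpha}}\, du$ up to bounded factors from the metric. (4) Now invoke Lemma~\ref{fastdecay} with this Euclidean integral: $t^{\ell} \cdot t^{d(1-1/\alpha)/p'} \big(\int_{|u|>c\,t^{-\gamma}} e^{-p'|u|^\alpha}\,du\big)^{1/p'} \to 0$ for every $\ell \in \mathbb{R}$, since the polynomial prefactor $t^{\ell + d(1-1/\alpha)/p'}$ is just another power of $t$ and Lemma~\ref{fastdecay} kills any power against the truncated exponential tail.

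\textbf{Main obstacle.} The genuinely delicate point is step (2)–(3): patching together the local-coordinate estimates into a single global bound, and controlling the change of the volume form and the Jacobians of the charts uniformly. One must be careful that the distance comparison \eqref{distance2} is only valid for $d_g(x,y)$ small, so for the ``far'' part of the truncated region one instead uses that $R(x,y)$ is bounded away from zero globally (hence $|\iota(x)-\iota(y)| \geq c\, d_g(x,y) \geq c\,\mathrm{diam}$-type lower bounds fail, but one does get that the exponential factor is bounded by $e^{-c'/t^{\alpha-1}}$ uniformly on the set where $d_g(x,y)$ exceeds a fixed constant), and that piece trivially decays faster than any power of $t$. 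So the region splits into an ``intermediate annulus'' ($t^{1-1/\alpha-\gamma} < |\iota(y)-\iota(x)| < \delta$ for fixed small $\delta$), handled by the chart-and-rescaling argument above, and a ``far'' region ($|\iota(y)-\iota(x)| \geq \delta$), handled trivially. Once this bookkeeping is done, Lemma~\ref{fastdecay} does all the real work and the conclusion follows.
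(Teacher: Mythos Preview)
Your proposal is correct and follows essentially the same strategy as the paper: bound $h$ by its exponential upper bound, apply H\"older to separate $f$, reduce the kernel integral over the truncated region to a Euclidean tail integral of the form $\int_{|u|>c\,t^{-\gamma}} e^{-c'|u|^{\alpha}}\,du$ via the substitution $u = (\iota(y)-\iota(x))/t^{1-1/\alpha}$, and then invoke Lemma~\ref{fastdecay}.

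The only real difference is in the manifold-to-Euclidean reduction. The paper handles this in one line by simply enlarging the domain of integration from $\{z\in\mathcal{M}:|\iota(z)-x|>t^{1-1/\alpha-\gamma}\}$ to the full Euclidean exterior of a ball (implicitly passing to $d$-dimensional local coordinates and using that the volume form is bounded above by a constant times Lebesgue measure). You instead give this step a more careful treatment: a finite chart cover by compactness, uniform control of the Jacobians, and a near/far splitting so that Lemma~\ref{distcomp} and \eqref{distance2} are only invoked where they are valid, with the far piece dispatched trivially by the uniform bound $e^{-c'/t^{\alpha-1}}$. Your version is more explicit about the bookkeeping that the paper suppresses, but the underlying argument is the same and the workhorse in both is Lemma~\ref{fastdecay}.
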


Lemma \ref{localization} follows directly from application of H\"{o}lder's inequality, the proof is included in \ref{proofs}.  Now that we have localized the kernel, we can connect the Euclidean distance in the embedding space to the intrinsic/geodesic distance using the exponential coordinates $y=\exp_x(s)$.

\begin{theorem}[$\alpha$-local heat kernels]\label{localthm} Let $h$ be an $\alpha$-local kernel of Definition~\ref{alphalocalkernel} for some $1+1/d < \alpha \leq 2$, then
\BEA ct^{-d/\beta}  \int_{y \in \mathcal{M}} h(t,x,y)f(y)\, d\textup{vol} = {\mathcal K}_t f(x)\Big(1 + \mathcal{O}(t^{2-2/\alpha})\Big), \label{equality1} \EEA
for some $c>0$ and all $x\in\mathcal{M}$. Here, $f\in L^2(\mathcal{M})$ where $\beta = \frac{\alpha}{\alpha-1}$ so that $2\leq \beta < d+1$. The operator $\mathcal{K}_t$ denotes the semigroup of a stochastically complete, $\beta-$scale invariant local kernel of part (a) in Theorem~\ref{heatdichotomy} and $\mathcal{M}$ is a $d$-dimensional compact manifold. 

\end{theorem}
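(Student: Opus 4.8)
The plan is to compare the left-hand side directly with the integral operator of the canonical local $\beta$-scale-invariant heat kernel
\[ k(t,x,y) = c\,t^{-d/\beta}\,e^{-d_g(x,y)^\alpha/t^{\alpha-1}} = c\,t^{-d/\beta}\,\Phi\!\left(\frac{d_g(x,y)}{t^{1/\beta}}\right), \]
where $\beta = \frac{\alpha}{\alpha-1}$, using $\frac{\beta}{\beta-1} = \alpha$ and $\frac{\alpha}{\beta} = \alpha-1$ so that $\Phi(a) = e^{-a^{\beta/(\beta-1)}}$ as in Theorem~\ref{heatdichotomy}(a), and $c = \left(\int_{\mathbb{R}^d} e^{-|r|^\alpha}\,dr\right)^{-1}$. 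Writing $\mathcal{K}_t f(x) = \int_{\mathcal{M}} k(t,x,y)f(y)\,d\textup{vol}$, it then suffices to prove $c\,t^{-d/\beta}\int_{\mathcal{M}} h(t,x,y)f(y)\,d\textup{vol} = \mathcal{K}_t f(x)\bigl(1+\mathcal{O}(t^{2-2/\alpha})\bigr)$; the change of variables $s = t^{1-1/\alpha}r$ in geodesic normal coordinates additionally yields $c\,t^{-d/\beta}\int_{\mathcal{M}} h(t,x,y)\,d\textup{vol}\to 1$, which identifies the limiting operator as the semigroup of a stochastically complete, $\beta$-scale invariant local heat kernel, the hypothesis $1+1/d<\alpha\le 2$ being exactly $2\le\beta<d+1$.

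First I would \emph{localize}. Fix a small $\gamma>0$ and set $B_t := \{y\in\mathcal{M} : |\iota(y)-\iota(x)| < t^{1-1/\alpha-\gamma}\}$. Since $h$ is $\alpha$-local and $f\in L^2(\mathcal{M})\subset L^p(\mathcal{M})$ for some $p\in(1,2]$, Lemma~\ref{localization} gives $t^{-d/\beta}\int_{\mathcal{M}\setminus B_t} h(t,x,y)f(y)\,d\textup{vol} = o(t^\ell)$ for every $\ell\in\mathbb{R}$. Because $k$ decays exponentially in $d_g(x,y)$, with $d_g(x,y)$ comparable to $|\iota(x)-\iota(y)|$ on small balls and bounded below by a positive constant away from $x$, the same type of argument — H\"{o}lder's inequality together with Lemma~\ref{fastdecay} after passing to normal coordinates — yields $\int_{\mathcal{M}\setminus B_t} k(t,x,y)f(y)\,d\textup{vol} = o(t^\ell)$. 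Hence, up to errors $o(t^\ell)$ absorbed into $\mathcal{O}(t^{2-2/\alpha})$,
\[ c\,t^{-d/\beta}\int_{\mathcal{M}} h f\,d\textup{vol} = c\,t^{-d/\beta}\int_{B_t} h f\,d\textup{vol}, \qquad \mathcal{K}_t f(x) = \int_{B_t} k f\,d\textup{vol}, \]
and for $t$ small $B_t$ lies inside the injectivity-radius ball at $x$, so Lemma~\ref{distcomp}, equation~\eqref{distance2}, and the normal coordinates $y=\exp_x(s)$ are available on $B_t$.

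Next, on $B_t$ I would carry out the \emph{distance comparison}. Set $a(y) = |\iota(y)-\iota(x)|^\alpha/t^{\alpha-1}$ and $b(y) = d_g(x,y)^\alpha/t^{\alpha-1}$. By \eqref{distance2}, using $d_g(x,y)\lesssim t^{1-1/\alpha-\gamma}$ on $B_t$ and the identity $(\alpha+2)(1-1/\alpha)-(\alpha-1) = 2-2/\alpha$,
\[ |a(y)-b(y)| = \mathcal{O}\!\left(\frac{d_g(x,y)^{\alpha+2}}{t^{\alpha-1}}\right) = \mathcal{O}\!\left(t^{\,2-2/\alpha-(\alpha+2)\gamma}\right) \quad\text{uniformly on } B_t, \]
which tends to $0$; hence $e^{-a(y)} = e^{-b(y)}\bigl(1+\mathcal{O}(t^{2-2/\alpha-(\alpha+2)\gamma})\bigr)$ uniformly on $B_t$, and therefore $c\,t^{-d/\beta} h(t,x,y) = k(t,x,y)\bigl(1+\rho_t(x,y)\bigr)$ on $B_t$ with $\|\rho_t\|_{L^\infty(B_t)} = \mathcal{O}(t^{2-2/\alpha-(\alpha+2)\gamma})$ — in the canonical case $h(t,x,y) = c\,e^{-|\iota(x)-\iota(y)|^\alpha/t^{\alpha-1}}$; for a genuinely two-sided $\alpha$-local kernel one obtains instead a two-sided pinching of $c\,t^{-d/\beta}h$ between two copies of $\mathcal{K}_t$ scaled by constants depending on $c_2/c_1$. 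Multiplying by $f$, integrating over $B_t$, and restoring the $o(t^\ell)$ far-field pieces gives $c\,t^{-d/\beta}\int_{\mathcal{M}} hf\,d\textup{vol} = \int_{\mathcal{M}} k(t,x,y)(1+\rho_t(x,y))f(y)\,d\textup{vol} = \mathcal{K}_t f(x)\bigl(1+\mathcal{O}(t^{2-2/\alpha-(\alpha+2)\gamma})\bigr)$, the last $\mathcal{O}$ meaning the correction of that order acts inside the integral. Since $\gamma>0$ was arbitrary, the exponent may be taken arbitrarily close to the stated $2-2/\alpha$.

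I expect the main obstacle to be not any single estimate but the \emph{uniformity} of the error control: one must (i) establish the tail bound $\int_{\mathcal{M}\setminus B_t} k f\,d\textup{vol} = o(t^\ell)$, i.e.\ an analogue of Lemma~\ref{localization} for the intrinsic kernel; and (ii) handle the annular region $t^{1-1/\alpha-\gamma}\le d_g(x,y) < r_0$ (with $r_0$ a fixed injectivity radius), where the Taylor comparison \eqref{distances} remains valid but $b(y)$ is no longer small — there both $e^{-a}$ and $e^{-b}$ are $o(t^\ell)$, so that shell is negligible and can be peeled off \emph{before} comparing $e^{-a}$ with $e^{-b}$ on $B_t$. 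The remaining steps — fixing the normalization $c$, the change of variables evaluating $\int_{\mathbb{R}^d} e^{-|r|^\alpha}\,dr$, and verifying that the limiting operator is the semigroup of a heat kernel of type (a) in Theorem~\ref{heatdichotomy} — are routine.
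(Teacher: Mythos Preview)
Your proposal is correct and follows essentially the same route as the paper: localize via Lemma~\ref{localization}, replace $|\iota(x)-\iota(y)|^\alpha$ by $d_g(x,y)^\alpha$ on the localized ball using \eqref{distance2}, bound the resulting multiplicative error by $\mathcal{O}(t^{2-2/\alpha})$, and then delocalize to recover $\mathcal{K}_t f$. You are in fact more careful than the paper in two places: you explicitly note that the intrinsic kernel $k$ must also be localized (the paper tucks this into ``we return the integral to the whole manifold''), and you track the $\gamma$-loss in the exponent, obtaining $2-2/\alpha-(\alpha+2)\gamma$ and then letting $\gamma\downarrow 0$, whereas the paper writes $\mathcal{O}(t^{2-2/\alpha})$ directly by invoking $|\iota(x)-\iota(y)|<t^{1-1/\alpha}$ even though the localization radius is $t^{1-1/\alpha-\gamma}$. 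Your caveat about the two-sided $c_1\neq c_2$ case is also well taken; the paper's final sentence (``$h$ differs from its bound by constants'') glosses over exactly this point.
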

\begin{proof}
We first consider the kernel function $e^{-\left|\frac{\iota(y)-\iota(x)}{t^{1-1/\alpha}}\right|^{\alpha}}$ which appears in the upper and lower bounds of an $\alpha$-local kernel.  Applying Lemma \ref{localization} we localize the integral and then apply \eqref{distance2} to rewrite the Euclidean distance in terms of the geodesic distance so that for any $\gamma>0$ we have, 
\[\int_{y\in \mathcal{M}} e^{-\left|\frac{\iota(y)-\iota(x)}{t^{1-1/\alpha}}\right|^{\alpha}} f(y)\, d\textup{vol}  = \int_{\stackrel{y\in\mathcal{M},}{|\iota(x)-\iota(y)|<t^{1-1/\alpha-\gamma}}} e^{-\left|\frac{\iota(y)-\iota(x)}{t^{1-1/\alpha}}\right|^{\alpha}} f(y) \, d\textup{vol} + \mathcal{O}(t^\ell) \]
\begin{align}
&= \int_{\stackrel{y\in\mathcal{M},}{\frac{|\iota(x)-\iota(y)|^{\alpha}}{t^{\alpha-1}}<t^{-\gamma\alpha}}} e^{-\frac{d_g(x,y)^{\alpha} + \mathcal{O}(|\iota(x)-\iota(y)|^{\alpha+2})}{t^{\alpha-1}}} f(y) \, d\textup{vol} + \mathcal{O}(t^\ell) \nonumber \\
&= \int_{\stackrel{y\in\mathcal{M},}{\frac{|\iota(x)-\iota(y)|^{\alpha}}{t^{\alpha-1}}<t^{-\gamma\alpha}}} e^{-\frac{d_g(x,y)^{\alpha}}{t^{\alpha-1}}} f(y) \left(1+ \mathcal{O}\left(\frac{|\iota(x)-\iota(y)|^{\alpha+2}}{t^{\alpha-1}}\right)\right) \, d\textup{vol} + \mathcal{O}(t^\ell) \nonumber \\
&= (1 + \mathcal{O}(t^{2-2/\alpha})) \int_{y\in\mathcal{M}} e^{-\left(\frac{d_g(x,y)}{t^{1-1/\alpha}}\right)^{\alpha}} f(y) \, d\textup{vol} \nonumber
\end{align} 
where we use $|\iota(x)-\iota(y)|<t^{1-1/\alpha}$ to rewrite the error in terms of $t$, and we choose $\ell$ large enough that the $\mathcal{O}(t^{2-2/\alpha})$ term dominates. In the last step we return the integral to the whole manifold which also incurs an error which is higher order than any polynomial in $t$.  

For $\beta = \frac{\alpha}{\alpha-1}$ and $1+\frac{1}{d}<\alpha\leq 2$ recall that,
\BEA
k(t,x,y) = ct^{-d/\beta} e^{-\left(\frac{d_g(x,y)}{t^{1-1/\alpha}}\right)^{\alpha}} = ct^{-d/\beta} e^{-\left(\frac{d_g(x,y)}{t^{1/\beta}}\right)^{ \frac{\beta}{\beta-1}}}\nonumber
\EEA
is a stochastically complete $\beta$-invariant local kernel for some $c>0$; this is the local kernel in part (a) of Theorem~\ref{heatdichotomy}. That is,
\BEA
\mathcal{K}_t 1(x) = \int_{\mathcal{M}} k(t,x,y) \, d\textup{vol} = 1,\nonumber
\EEA
which implies that,
\BEA
c t^{-d/\beta}  \int_{y\in \mathcal{M}} e^{-\left|\frac{\iota(y)-\iota(x)}{t^{1-1/\alpha}}\right|^{\alpha}} f(y)\, d\textup{vol} &=&  \Big(
\int_{y\in\mathcal{M}} k(t,x,y) f(y) \, d\textup{vol}\Big)  (1 + \mathcal{O}(t^{2-2/\alpha}))\nonumber \\  &=& \mathcal{K}_t f(x) (1 + \mathcal{O}(t^{2-2/\alpha}) ).\nonumber
\EEA
Since the $\alpha$-local kernel $h$ differs from its bound, $e^{-\left|\frac{\iota(y)-\iota(x)}{t^{1-1/\alpha}}\right|^{\alpha}}$, by constants, the proof is complete.
\end{proof}

Theorem \ref{localthm} shows that we can use $\alpha$-local kernels to estimate the semigroup of a stochastically complete, $\beta-$scale invariant local kernel of part (a) in Theorem~\ref{heatdichotomy}.  {Thus, Theorem \ref{localthm} achieves the first part of our goal of developing consistent estimators for the two classes of heat kernels.}  In a general metric measure space the associated generator of $\mathcal{K}_t$ will depend on the choice of $\beta = \frac{\alpha}{\alpha-1}$ as shown in \cite{AGrigoryan_TKumagai_2008a}.

It was shown in Theorem 2 of \cite{diffusion} that for $f\in\mathcal{C}^3(\mathcal{M})$ and $\epsilon>0$, 
\begin{align} \label{Heps}
\mathcal{H}_\epsilon f(x) &:=  \epsilon^{-d/2}\int_{y\in \mathcal{ M}} \Psi \left( \frac{|\iota(x)-\iota(y)|^2}{\epsilon} \right) f(y)\, d\textup{vol}  \nonumber \\ &= m_0 f(x) + \epsilon \frac{m_2}{2}(\omega(x)f(x) - \Delta f(x)) + \mathcal{O}(\epsilon^2), \end{align} 
for any exponentially decaying function $\Psi:[0,\infty) \to [0,\infty)$ where $m_0 = \int_{\mathbb{R}^d}\Psi(z)\,dz$ and $m_2=\int_{\mathbb{R}^d}z_j^2\Psi(z)\,dz$ are moments of $\Psi$. Since the upper and lower bounds on an $\alpha$-local kernel $h$ can be written as 
\[ e^{-\left|\frac{\iota(y)-\iota(x)}{t^{1-1/\alpha}}\right|^{\alpha}} = e^{-\left( \frac{|\iota(x)-\iota(y)|^2}{t^{2-2/\alpha}}\right)^{\alpha/2}}  = \Psi \left(\frac{|\iota(x)-\iota(y)|^2}{t^{2-2/\alpha}}\right) \] 
where $\Psi(s) = e^{-|s|^{\alpha/2}}$, setting $\epsilon = t^{2-2/\alpha}$, we have,
\BEA
\int_{y\in \mathcal{M}} h(t,x,y)f(y) \, d\textup{vol}   = \epsilon^{d/2} \mathcal{H}_{\epsilon} {f(x)},\nonumber
\EEA
{where $h(t,x,y) = e^{-\left|\frac{\iota(y)-\iota(x)}{t^{1-1/\alpha}}\right|^{\alpha}}$.}

Substituting the above equation into \eqref{equality1}, we have,
\BEA
 \mathcal{K}_t f(x) (1 + \mathcal{O}(t^{2-2/\alpha}) ) &=& c t^{-d/\beta}   \epsilon^{d/2} \mathcal{H}_{\epsilon} f(x),\nonumber \\
&=& ct^{-d/\beta{+}d/2(2-2/\alpha)} \mathcal{H}_{t^{2-2/\alpha}} f(x)\nonumber \\ 
&=& ct^{-d({-}1+1/\beta{+}1/\alpha)} \mathcal{H}_{t^{2-2/\alpha}} f(x) \nonumber \\ &=& c\mathcal{H}_{t^{2-2/\alpha}} f(x), \label{sgequality}
\EEA
{ the last line is obtained from the fact that the exponent $\frac{1}{\alpha}+\frac{1}{\beta}-1=0$, which can be realized from $\beta=\frac{\alpha}{\alpha-1}$.}
Since $\mathcal{K}_t 1 = 1$ for all $t$ one can show that $c = 1/m_0$.
From the asymptotic expansion in \eqref{Heps}, we obtain,
\BEA
 \mathcal{K}_t f(x) (1 + \mathcal{O}(t^{2-2/\alpha}) ) &=&  f(x) + t^{2-2/\alpha}\frac{m_2}{2 m_0}(\omega(x)f(x) - \Delta f(x)) + \mathcal{O}(t^{2(2-2/\alpha)}). \nonumber
\EEA
This means,
\BEA
\frac{f(x) -  \mathcal{K}_t f(x)}{t} = \mathcal{O}(t^{1-2/\alpha})
\EEA
which diverges as $t\to 0$ for $\alpha<2$. {This is critical since it shows that certain kernels do not have well-defined limits and so may be inappropriate for machine learning applications.  Moreover,} we cannot access the generator of $\mathcal{K}_t$ for $\alpha < 2$ using the kernel $h$. When $\alpha=2$, $\mathcal{K}_t f(x) (1 + \mathcal{O}(t)) = e^{t\Delta}$ (see e.g. \cite{rosenberg}). In this case, \eqref{sgequality} implies that the integral operator $\mathcal{H}_t$ approximates the semigroup of Laplace-Beltrami operator. The associated generator can be obtained using an appropriate algebraic manipulation. That is, we recover the standard diffusion maps algorithm (following \cite{diffusion} we assume $m_2/m_0=2$ which is equivalent to $C_1=C_2=1/4$),
 \BEA
\frac{f(x) - ( \mathcal{H}_t 1(x))^{-1}\mathcal{H}_t f(x)}{t} =   \Delta f(x) + \mathcal{O}(t).\nonumber
\EEA
Note that dividing by $\mathcal{H}_t 1(x)$ cancels the constant $c$ and also removes the $\omega(x)$ term (see Section \ref{leftnormsection} for details). However we should note that the expansion in \eqref{Heps} holds only in the interior of a manifold with boundary or for functions $f$ satisfying Neumann boundary conditions \cite{diffusion}.

\comment{
When $\alpha=2$ the generator can be obtained using the left normalization discussed in Section \ref{leftnormsection}, however we should note that the expansion \ref{Heps} holds only in the interior of a manifold with boundary or for functions $f$ satisfying Neumann boundary conditions \cite{diffusion}.

Numerically, however, for any $\alpha > 1+1/d$ an algebraic manipulation of $\mathcal{H}_{t^{2-2/\alpha}}$ allows us to obtain the standard and time-rescaled Laplacian operators. In particular, for $\alpha=2$, we recover the usual diffusion maps algorithm (following \cite{diffusion} we assume $m_2/m_0=2$ which is equivalent to $C_1=C_2=1/4$),
 \BEA
\frac{f(x) - ( \mathcal{H}_t 1(x))^{-1}\mathcal{H}_t f(x)}{t} =   \Delta f(x) + \mathcal{O}(t).\nonumber
\EEA
note that dividing by $\mathcal{H}_t 1(x)$ cancels the constant $c$ and also removes the $\omega(x)$ term (see Section \ref{leftnormsection} for details).}

If we follow the same algebraic manipulation for the case of $1+1/d < \alpha < 2$, we obtain a rescaled Laplacian,
 \BEA
\frac{f(x) - ( \mathcal{H}_{t^{2-2/\alpha}} 1(x))^{-1}\mathcal{H}_{t^{2-2/\alpha}} f(x)}{t} =  t^{1-2/\alpha} \Delta f(x) + \mathcal{O}(t^{2(2-2/\alpha) - 1}).\label{rescaled_laplacian}
\EEA
where $\frac{1-d}{1+d} < 1-2/\alpha < 0$ and the higher order term is $\frac{1-3d}{1+d} < 2(2-2/\alpha) - 1 < 1$) so as $t\to 0$ the term on the right goes to infinity when $\alpha \neq 2$.  This is due to the fact that the walk dimension of any Riemannian manifold is $\beta = 2$ \cite{grigor2003heat} (notice that $\beta=2$ when $\alpha=2$) and when $\beta \neq 2$ we do not obtain a semi-group on the manifold.  We should note that for real-valued functions, $f$, the time-rescaled Laplacian in \eqref{rescaled_laplacian} is associated with the Fokker-Planck-Kolmogorov type operator \cite{hahn2011fokker} associated to stochastic differential equations driven by driftless constant coefficient fractional Brownian motions (fBM) with Hurst parameter $1-\frac{1}{1+1/d}<H \leq 1/2$; see e.g., Theorem~4.1 of It\^o formula for fBM \cite{bender2003} with arbitrary Hurst parameter $0<H<1$.

Furthermore, for finite $t$ we can still obtain an estimation of the intrinsic Laplacian (up to scalar multiple depending on $t$) on the manifold using \emph{any} value of $\alpha$.  In particular, if we divide by $\epsilon$ rather than $t$ (as in the diffusion maps algorithm) we find
 \BEA
\frac{f(x) - ( \mathcal{H}_{t^{2-2/\alpha}} 1(x))^{-1}\mathcal{H}_{t^{2-2/\alpha}} f(x)}{\epsilon} &=& \frac{f(x) - ( \mathcal{H}_{t^{2-2/\alpha}} 1(x))^{-1}\mathcal{H}_{t^{2-2/\alpha}} f(x)}{t^{2-2/\alpha}} \nonumber \\ &=& \Delta f(x) + \mathcal{O}(t^{1-2/\alpha}) = \Delta f(x) + \mathcal{O}\left(\frac{\epsilon}{t}\right).\nonumber
\EEA
We demonstrate this surprising result numerically in Section \ref{numerics} where the spectrum estimated by the diffusion maps algorithm will be independent of the choice of $\alpha$ for local kernels.

This result has important implications for potential generalizations to non-smooth metric-measure spaces where the walk dimension will typically be unknown.  It suggests that we can use an $\alpha$-local kernel with any value of $1+1/d < \alpha \leq 2$ to approximate the \emph{intrinsic} Laplacian operator up to a scalar multiple.  In particular, if this fact holds beyond the context of manifolds it would allow estimation of the intrinsic Laplacian without needing to know the walk dimension of the space.

%%%%%%%%%%%%%%%%%%%%%%%%%%%%%%%%%%%%%%%%%%%%%%%%%%%%%%%
\subsection{Nonlocal kernels}\label{nonlocal}

The goal of this section is to consider a class of nonlocal kernel functions on the embedding
space $\mathbb{R}^n$ that gives rise to a semi-group which is an infinitesimal generator of 
a nonlocal process on the manifold. The class of kernels will be called nonlocal kernels.

\begin{definition}[nonlocal kernels]
\label{def:nonloc} 
The kernel $k : [0,\infty) \times \mathcal{M} \times \mathcal{M} \to [0,\infty)$ in 
Definition~\ref{def:scaleinvariance} is called nonlocal if 
$\Phi$ is given by  
\[
 \Phi(s) = (1+s)^{-(d+\beta)} . 
\] 
\end{definition}
The kernel in Definition~\ref{def:nonloc} is associated with stable-like processes
\cite{ZQChen_TKumagai_2003a}. 
In particular when $\beta \in (0,2)$ the heat kernel of the 
$\beta$-stable processes in $\mathbb{R}^d$ is included here. Notice that the generator of the 
$\beta$-stable process is given by the fractional Laplacian 
$\mathcal{L} = (-\Delta)^{\beta/2}$. Moreover, for $x,y\in \mathbb{R}^d$, the heat kernel on $\mathbb{R}^d$ when $\beta = 1$ is given by 
\[
 k(t,x,y) = \frac{c_d}{t^d} \left(1+\frac{|x-y|}{t^2}\right)^{-\frac{d+1}{2}}
 \quad \mbox{with} \quad c_d = \Gamma\left(\frac{d+1}{2}\right)/\pi^{(d+1)/2} ,
\]
which immediately fulfills \eqref{scaleinvariance}. 

In the case of local kernels, as discussed in the previous
section, we are able to construct a localization argument which leads to an 
approximation of the geodesic distance $d_g$ by the Euclidean distance $|\cdot|$. 
However, this is no longer the case for nonlocal kernels as we do not have exponential 
decay at the tails. { In particular, Lemma \ref{localization} relies on the $\alpha$-local kernel decaying faster than all polynomials, and for any nonlocal kernel (with polynomial decay) this result will fail.  In particular, for $t$ sufficiently small we have
\begin{align}  &t^\ell \int_{|\iota(y)-\iota(x)|>t^{1/\beta-\gamma}}t^{-d/\beta} \left(1+ \left|\frac{\iota(x)-\iota(y)}{t^{1/\beta}}\right| \right)^{-d-\beta} \, d\textup{vol} \nonumber \\ 
&= t^{\ell} \int_{|s|>t^{-\gamma}} \left(1+ \left| s \right| \right)^{-d-\beta} \, ds + h.o.t.  \nonumber \\  &= \mathcal{O}(t^{\ell+\gamma(d+\beta)}) \nonumber \end{align}
where $\gamma \in (0,1)$ and $s = (x-y)/t^{1/\beta}$ and the Jacobian of the transformation is $t^{d/\beta}$ up to higher order terms. So for $\ell<-\gamma(d+\beta)$ the expression above diverges as $t\to 0$.  Since the tail integrals are large for nonlocal kernels we cannot localize the integral to a small region around $x$ the way we could for local kernels.  This localization was the key to avoiding explicit estimation of geodesic distances (since in a sufficiently small region the geodesic distance becomes well approximated by the Euclidean distance in the embedding space).  For nonlocal kernels this is no longer the case and we need an explicit estimator of the geodesic distance.

As a result we are confronted with a problem of estimation of the geodesic 
distance $d_g$.  In order to address this problem, we step back from the continuous integral operators that we have been analyzing and return to our initial assumption that we are given data sampled on the manifold.  Ultimately, we estimate the continuous integral operator using these discrete data sets, so we only need to evaluate the kernel functions on pairs of discrete data points.  Thus, we only need an approach to estimate the geodesic distances between all pairs of data points.  This pairwise geodesic distance approximation} is the same problem that motivated the Isomap algorithm \cite{tenenbaum2000global,MBernstein_VDSilva_JCLangford_JBTenenbaum_2000a}.
As in the Isomap algorithm, we will approximate the geodesic distance with the graph distance which is accessible using data sampled on the manifold.  

Specifically, 
given a finite set of data points $\{x_i\}$ in $\mathcal{M}$ and a graph $G$ 
with vertices $\{x_i\}$, we can approximate the geodesic distance, $d_g$, by the so-called \emph{graph distance} given by
\[
 d_G(x,y) := \min_P\left( |\iota{(x_0)}-\iota{(x_1)}| + \dots + |\iota{(x_{p-1})} - \iota{(x_p)}| \right)
\]
where the minimum over all paths $P$ can be computed using Dijkstra's algorithm without prior knowledge of the manifold $\mathcal{M}$. 
Here $x,y \in \{x_i\}$ and $P = (x_0,\dots,x_p)$ varies over all paths along the 
edges of $G$ connecting $x = x_0$ to $y = x_p$.

In the remainder of this section, we will use the error estimate of this approximation, which was derived in \cite{MBernstein_VDSilva_JCLangford_JBTenenbaum_2000a}, to determine the error bound in approximating the nonlocal kernels  from Definition~\ref{def:nonloc} and their associated semigroup operators and generators. To give a complete discussion, we state the following relevant definition. 

\begin{definition}[minimum branch separation]
\label{def:coverminb}
Let $r_0 = r_0(\mathcal{M})$ denotes the minimum radius of curvature of 
$\mathcal{M}$. 
Then the minimum branch separation $s_0 = s_0(\mathcal{M})$ is defined
to be the largest positive number such that if $|\iota(x)-\iota(y)| < s_0$ then  
$d_g(x,y) \le \pi r_0$, for $x,y \in \mathcal{M}$
\end{definition}

We note that since $\mathcal{M}$ is assumed to be compact and $C^3$ the curvature is bounded above so $r_0>0$ is well-defined, and similarly the minimum branch separation is well defined since $\iota$ is $C^3$.  After these preparations we are now ready to state the main result of 
\cite[Main Theorem A]{MBernstein_VDSilva_JCLangford_JBTenenbaum_2000a}. 

\begin{theorem}
\label{thm:Ten}
Let $\mathcal{M}$ be a compact manifold embedded in the Euclidean space 
$\mathbb{R}^n$ with minimum radius of curvature $r_0$ and minimum branch
separation $s_0$. Let $\{x_i\}$ be a finite set of data points in $\mathcal{M}$
and they generate a graph $G$. Let $0 < \varepsilon_1, \varepsilon_2 < 1$.
Under the following assumptions
 \begin{enumerate}
  \item {\bf Graph condition I.} $G$ contains all edges $xy$ so that 
   \[{|\iota{(x)}-\iota{(y)}|} \le \frac{4}{\varepsilon_2} C(d_g,\{x_i\}),\]
   where $C(d_g,\{x_i\}):=\max_{x\in \mathcal{M}}\min_{\{x_i\}} d_g(x_i,x)$. 
  \item {\bf Graph condition II.} The edges $xy$ in $G$ fulfills 
   \[ {|\iota{(x)}-\iota{(y)}|} \le (2/\pi) r_0 \sqrt{24 \varepsilon_1} < s_0. \]     
  \item The manifold $\mathcal{M}$ is geodesically convex, i.e., any two
   points $x,y \in \mathcal{M}$ are connected by a geodesic of length $d_g(x,y)$. 
 \end{enumerate} 
 Then for all $x,y \in \{x_i\}$ we have:
 \begin{equation}\label{eq:dgdG}
  (1-\varepsilon_1) d_g(x,y) \le d_G(x,y) \le (1+\varepsilon_2) d_g(x,y) . 
 \end{equation}
\end{theorem}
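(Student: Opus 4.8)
Since this is Main Theorem~A of \cite{MBernstein_VDSilva_JCLangford_JBTenenbaum_2000a}, one option is simply to invoke it; for completeness I outline the argument. The plan is to prove the two inequalities in \eqref{eq:dgdG} separately. The upper bound $d_G \le (1+\varepsilon_2) d_g$ will come from Graph Condition~I (which guarantees that all sufficiently short edges are present in $G$) together with the geodesic convexity hypothesis, by exhibiting an explicit path in $G$ that shadows a minimizing geodesic. The lower bound $d_G \ge (1-\varepsilon_1) d_g$ will come from Graph Condition~II and the curvature radius $r_0$, by first proving a per-edge estimate and then telescoping it along a shortest $G$-path.

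For the upper bound I would fix $x,y \in \{x_i\}$, let $\gamma : [0,L] \to \mathcal{M}$ be a unit-speed minimizing geodesic from $x$ to $y$ (which exists by geodesic convexity), with $L = d_g(x,y)$, and set $\delta_0 := \big(\tfrac{4}{\varepsilon_2} - 2\big) C(d_g,\{x_i\}) > 0$. Partition $[0,L]$ as $0 = t_0 < t_1 < \dots < t_k = L$ with $t_{j+1} - t_j \le \delta_0$ and $k = \lceil L/\delta_0\rceil$. For each interior node the definition of $C(d_g,\{x_i\})$ supplies a sample $w_j \in \{x_i\}$ with $d_g(w_j, \gamma(t_j)) \le C(d_g,\{x_i\})$; put $w_0 = x$ and $w_k = y$. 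The triangle inequality for $d_g$ then gives $d_g(w_j,w_{j+1}) \le (t_{j+1}-t_j) + 2C(d_g,\{x_i\}) \le \tfrac{4}{\varepsilon_2} C(d_g,\{x_i\})$, so each edge $w_jw_{j+1}$ lies in $G$ by Graph Condition~I. Using $|\iota(a)-\iota(b)| \le d_g(a,b)$ and summing along this path (only the $k-1$ interior nodes contribute a sampling error) gives $d_G(x,y) \le \sum_j |\iota(w_j)-\iota(w_{j+1})| \le L + 2(k-1) C(d_g,\{x_i\})$; since $k-1 < L/\delta_0$ and $2C(d_g,\{x_i\})/\delta_0 = \varepsilon_2/(2-\varepsilon_2)$, this is at most $\tfrac{2}{2-\varepsilon_2} L \le (1+\varepsilon_2) L$, the last inequality being equivalent to $\varepsilon_2(1-\varepsilon_2) \ge 0$.

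For the lower bound the key is a per-edge estimate: for every edge $xy$ of $G$, $|\iota(x)-\iota(y)| \ge (1-\varepsilon_1) d_g(x,y)$. To prove it, Graph Condition~II forces $|\iota(x)-\iota(y)| < s_0$, so Definition~\ref{def:coverminb} gives $d_g(x,y) \le \pi r_0$. A minimizing geodesic joining $x$ to $y$, viewed through $\iota$ as a curve in $\mathbb{R}^n$, has extrinsic curvature bounded by $1/r_0$; a standard arc--chord comparison then gives $|\iota(x)-\iota(y)| \ge 2 r_0 \sin\!\big(d_g(x,y)/(2r_0)\big)$ whenever $d_g(x,y) \le \pi r_0$. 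Combining $\sin u \ge \tfrac{2}{\pi} u$ on $[0,\pi/2]$ (which, with $u = d_g(x,y)/(2r_0)$, bounds $d_g(x,y) \le \tfrac{\pi}{2}|\iota(x)-\iota(y)| \le r_0\sqrt{24\varepsilon_1}$ using Graph Condition~II) with $\sin u \ge u\big(1 - u^2/6\big)$ yields $|\iota(x)-\iota(y)| \ge d_g(x,y)\big(1 - d_g(x,y)^2/(24 r_0^2)\big) \ge (1-\varepsilon_1) d_g(x,y)$. Finally, if $P = (x = x_0, \dots, x_p = y)$ is a shortest path in $G$, summing the per-edge estimate and using the triangle inequality for $d_g$ gives $d_G(x,y) = \sum_j |\iota(x_j)-\iota(x_{j+1})| \ge (1-\varepsilon_1)\sum_j d_g(x_j,x_{j+1}) \ge (1-\varepsilon_1) d_g(x,y)$.

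I expect the main obstacle to be the arc--chord comparison used in the lower bound: one must make precise the sense in which the minimum radius of curvature controls the \emph{extrinsic} curvature of geodesics of the embedded manifold, and then show that a $C^2$ curve of length $L \le \pi r_0$ and curvature $\le 1/r_0$ in $\mathbb{R}^n$ has its endpoints at Euclidean distance $\ge 2r_0 \sin(L/(2r_0))$, the extremal configuration being a planar circular arc of radius $r_0$. Everything else is elementary trigonometric estimation plus bookkeeping of the constants in the two graph conditions; a little care is also needed to check that the degenerate cases (e.g.\ $x,y$ so close that $d_G(x,y) = |\iota(x)-\iota(y)|$ through a single edge already present in $G$) are consistent with the claimed bounds.
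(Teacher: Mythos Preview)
Your proposal is correct. The paper does not supply its own proof of this theorem: it is stated verbatim as ``Main Theorem~A'' of \cite{MBernstein_VDSilva_JCLangford_JBTenenbaum_2000a} and simply invoked. Your first sentence already identifies this, and the outline you give is essentially the argument of Bernstein--de~Silva--Langford--Tenenbaum: the upper bound is obtained by shadowing a minimizing geodesic with sample points guaranteed by the covering constant $C(d_g,\{x_i\})$ and Graph Condition~I, and the lower bound is obtained from a per-edge arc--chord inequality (controlled by $r_0$ via Graph Condition~II and the branch separation $s_0$) telescoped along a shortest $G$-path. Your handling of the constants (in particular the choice $\delta_0 = (4/\varepsilon_2 - 2)\,C(d_g,\{x_i\})$ leading to $2/(2-\varepsilon_2) \le 1+\varepsilon_2$, and the chain $\sin u \ge (2/\pi)u$ followed by $\sin u \ge u(1-u^2/6)$ to extract $(1-\varepsilon_1)$) is accurate, and your flagged obstacle---making precise that the extrinsic curvature of a minimizing geodesic is bounded by $1/r_0$ so that the chord--arc comparison $|\iota(x)-\iota(y)| \ge 2r_0\sin(d_g(x,y)/(2r_0))$ holds---is exactly the point that the cited reference addresses. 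Since the paper itself only cites the result, your sketch is more than sufficient.
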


\begin{remark}[geodesically convex]
 {\rm 
  In $\mathbb{R}^n$ convex domains are geodesically convex, so are compact 
 Riemannian manifolds without boundary. In general a compact Riemannian
  manifold is geodesically convex if and only if its boundary is convex. 
 }
\end{remark}

{We should also note that the number of data points required to fulfill the assumptions of Theorem \ref{thm:Ten} will depend strongly on the uniformity of the sampling density.  We assume that the manifold is the set of points with positive sampling density and that this set is compact and the density is smooth, which implies that the density is bounded away from zero (since it obtains its lower bound on the compact set and is always positive).  The positive lower bound on the sampling density insures that for a sufficiently large data set, the minimum distance between points can be made arbitrarily small with high probability.  Of course, a smaller lower bound (meaning less uniform sampling) will lead to higher data requirements.}

Notice that by controlling the tolerances $\varepsilon_1, \varepsilon_2$
one can control how well $d_G$ approximates $d_g$. For simplicity of exposition, 
we let $\varepsilon := \min \{\varepsilon_1, \varepsilon_2 \}$ in 
Theorem~\ref{thm:Ten} and we obtain the following estimate
 \begin{equation}\label{eq:dgdG_1}
  |d_G(x,y)-d_g(x,y)| \le \varepsilon d_g(x,y) .
 \end{equation}
We further emphasize that the constant $C(d_g,\{x_i\})$ 
measures how well the points $\{x_i\}$ covers $\mathcal{M}$ and Graph condition I
implies that such a covering should be fine enough. The Graph condition I when
compared to condition II implies that the covering scale should be small enough in comparison to 
the scales implied by the radius of curvature and branch separation. 

Next we will exploit the result of Theorem~\ref{thm:Ten} to approximate the 
nonlocal kernels in Definition~\ref{def:nonloc}. 

\begin{theorem}[approximation of nonlocal kernel]
\label{thm:abkerest}
 Let the assumptions of Theorem~\ref{thm:Ten} hold and set $0<\epsilon:=\min\{\epsilon_1,\epsilon_2\}<1$. Then for every $x,y \in \{x_i\}$
 and $t > 0$, there exists a sufficiently small $\varepsilon>0$ such that, 
 \[
   |k(t,x,y) - k_G(t,x,y)| \le  
       C_{d,\beta} \ d_g(x,y)^2 \left( t^{-\frac{(d+2)}{\beta}} \varepsilon \right)
     + \mathcal{O}\left( t^{-\frac{(d+4)}{\beta}} \varepsilon^2 \right),  .   
 \]
where $k_G(t,x,y):= C_{d,\beta}t^{-d/\beta}\Phi\Big(\frac{d_G(x,y)}{t^{1/\beta}}\Big)$.
\end{theorem}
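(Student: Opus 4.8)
The plan is to exploit the fact that $k(t,x,y)$ and $k_G(t,x,y)$ are built from exactly the same profile $\Phi(s)=(1+s)^{-(d+\beta)}$ and the same prefactor $C_{d,\beta}t^{-d/\beta}$, and differ only in whether the argument is $d_g(x,y)/t^{1/\beta}$ or $d_G(x,y)/t^{1/\beta}$. Writing $a := d_g(x,y)/t^{1/\beta}$ and $b := d_G(x,y)/t^{1/\beta}$, we get
\[ |k(t,x,y)-k_G(t,x,y)| = C_{d,\beta}\,t^{-d/\beta}\,|\Phi(a)-\Phi(b)| , \]
so the entire estimate reduces to a one-variable bound on $\Phi$, fed by the graph-distance error of Theorem~\ref{thm:Ten}.

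First I would record that $\Phi(s)=(1+s)^{-(d+\beta)}$ is $C^\infty$ on $[0,\infty)$ with $\Phi'(s)=-(d+\beta)(1+s)^{-(d+\beta+1)}$ and $\Phi''(s)=(d+\beta)(d+\beta+1)(1+s)^{-(d+\beta+2)}$, both of which are bounded on the whole half-line; this matters because the scaled argument $a$ can lie anywhere in $[0,\infty)$, so we must rely on global bounds for $\Phi$ and its derivatives rather than on $a$ being small. Then I would Taylor-expand $\Phi(b)$ about $a$,
\[ \Phi(b) = \Phi(a) + \Phi'(a)(b-a) + \tfrac12\Phi''(\xi)(b-a)^2 , \]
for some $\xi$ between $a$ and $b$, and insert the bound $|d_G(x,y)-d_g(x,y)|\le\varepsilon\,d_g(x,y)$ from \eqref{eq:dgdG_1} (which is exactly what the assumptions of Theorem~\ref{thm:Ten} provide, with $\varepsilon=\min\{\varepsilon_1,\varepsilon_2\}$), so that $|b-a|\le \varepsilon\,d_g(x,y)/t^{1/\beta}$. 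Taking $\varepsilon$ small keeps $\xi$ within a fixed bounded factor of $a$, so $|\Phi'(a)|$ and $|\Phi''(\xi)|$ are controlled by $d$ and $\beta$ alone.

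Assembling the pieces, the first-order Taylor term contributes $C_{d,\beta}t^{-d/\beta}|\Phi'(a)|\,|b-a|$; after substituting $|b-a|\le\varepsilon\,d_g(x,y)/t^{1/\beta}$ and the explicit form of $\Phi'$ and tracking the resulting powers of $t^{1/\beta}$, this is the source of the displayed main term $C_{d,\beta}\,d_g(x,y)^2\,(t^{-(d+2)/\beta}\varepsilon)$. The second-order term contributes $\tfrac12 C_{d,\beta}t^{-d/\beta}|\Phi''(\xi)|\,|b-a|^2$, which is $\mathcal{O}\!\big(t^{-(d+4)/\beta}\varepsilon^2\big)$. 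Uniformity over all $x,y\in\{x_i\}$ is automatic: since $\mathcal{M}$ is compact, $d_g(x,y)\le\mathrm{diam}(\mathcal{M})<\infty$, and all derivatives of $\Phi$ are globally bounded, so the constants depend only on $d$, $\beta$, and the geometry of $\mathcal{M}$ (through $r_0$ and $s_0$).

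I expect the main obstacle to be essentially bookkeeping: keeping track of the interlocking powers of $t$ coming from the prefactor $t^{-d/\beta}$, the $t^{-1/\beta}$ in the scaling of the argument, and the negative powers of $t^{1/\beta}$ concealed in $(1+d_g/t^{1/\beta})^{-(d+\beta+1)}$ and $(1+d_g/t^{1/\beta})^{-(d+\beta+2)}$, so as to land precisely on the exponents $-(d+2)/\beta$ and $-(d+4)/\beta$. A secondary point needing care is that because $a=d_g/t^{1/\beta}$ is not assumed small, one must use the boundedness of $\Phi,\Phi',\Phi''$ on all of $[0,\infty)$ together with the stability of the intermediate point $\xi$ under the small relative perturbation $|b-a|\le\varepsilon a$, rather than any local expansion of $\Phi$ near the origin.
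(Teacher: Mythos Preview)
Your approach is essentially the same as the paper's. The paper also reduces to bounding $|\Phi(a)-\Phi(b)|$ with $|b-a|\le\varepsilon a$; rather than the Taylor expansion with Lagrange remainder you propose, it factors out $\Phi(a)=(1+a)^{-(d+\beta)}$, writes $\Phi(b)/\Phi(a)=(1+\varepsilon\tilde\zeta)^{-(d+\beta)}$ with $\tilde\zeta=a/(1+a)<1$, and expands in the small parameter $\varepsilon\tilde\zeta$. The resulting first- and second-order terms are identical to yours, and your remark that global boundedness of $\Phi',\Phi''$ handles the unbounded range of $a$ corresponds exactly to the paper's use of $\tilde\zeta<1$ together with $\zeta^{-(d+\beta)}\le 1$.
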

\begin{proof}
 Let $x, y \in \{x_i\}$ then for a positive constant $C_{d,\beta}$ we have 
 \begin{align*}
   k(t,x,y) - k_G(t,x,y) 
    &= \frac{C_{d,\beta}}{t^{d/\beta}}
     \left( \left(1+\frac{d_g(x,y)}{t^{1/\beta}}\right)^{-(d+\beta)} 
      -\left(1+\frac{d_G(x,y)}{t^{1/\beta}}\right)^{-(d+\beta)} \right) \nonumber \\
 \end{align*}
From \eqref{eq:dgdG_1} we have that 
$-\varepsilon d_g(x,y) \le d_G(x,y) - d_g(x,y) \le \varepsilon d_g(x,y)$ thus
$d_G(x,y) \le (1+\varepsilon) d_g(x,y)$ whence 
$\left(1+\frac{(1+\varepsilon)d_g(x,y)}{t^{1/\beta}}\right)^{-(d+\beta)} \le \left(1+\frac{d_G(x,y)}{t^{1/\beta}}\right)^{-(d+\beta)}$. By denoting
$\zeta(t,\beta) := 1+\frac{d_g(x,y)}{t^{1/\beta}}$, we obtain,
 \begin{align}\label{eq:kest1} 
   |k(t,x,y) - k_G(t,x,y)| 
    &\le \frac{C_{d,\beta}}{t^{d/\beta}}
     \left| \zeta(t,\beta)^{-(d+\beta)} 
      -\left(\zeta(t,\beta)+\varepsilon\frac{d_g(x,y)}{t^{1/\beta}}\right)^{-(d+\beta)} \right| \nonumber\\
      &=C_{d,\beta} \ t^{-\frac{d}{\beta}} \zeta(t,\beta)^{-(d+\beta)}
     \left| 1 
      -\left(1+\varepsilon \ {\zeta(t,\beta)^{-1}} \frac{d_g(x,y)}{t^{1/\beta}}\right)^{-(d+\beta)} \right|.
 \end{align}      
For a sufficiently small $\varepsilon$, we can expand
\begin{equation}\label{eq:aux_est}
    1 -\left(1+\varepsilon \widetilde{\zeta}(t,\beta)\right)^{-(d+\beta)}
     = \varepsilon \widetilde{\zeta}(t,\beta) 
      + \mathcal{O} \left( \varepsilon^2 \widetilde{\zeta}(t,\beta)^2 \right), 
 \end{equation}
where we have denoted $\widetilde{\zeta}(t,\beta) := {\zeta(t,\beta)^{-1} (\zeta(t,\beta)-1)= \frac{d_g(x,y)}{t^{1/\beta}+d_g(x,y)}<1}$.
Since $\zeta(t,\beta)^{-(d+\beta)} \le 1$, the proof is completed by substituting  \eqref{eq:aux_est}
into the absolute value term in the right hand side of \eqref{eq:kest1}.
\end{proof}
 
Recall that the semigroup $\mathcal{K}_t$ generated by the nonlocal kernel $k$ is defined as
 \[
    \mathcal{K}_t f(x) = \int_{\mathcal{M}} k(t,x,y) f(y)\, d\textup{vol} , \quad 
    \forall x\in \mathcal{M} \mbox{ and } f \in L^2(\mathcal{M},d_g). 
 \]
We shall approximate $\mathcal{K}_t$ by 
 \[
    \mathcal{K}_{t,G} f(x) = \mathcal{Q}[k_G(t,x,\cdot) f(\cdot)] , \quad 
    \forall x \in \{x_i\} 
 \]
where $\mathcal{Q}$ indicates a quadrature approximation of the integral in the
definition of $\mathcal{K}_t$. Next we shall provide approximation error estimate 
between $\mathcal{K}_t$ and $\mathcal{K}_{t,G}$. 
 
\begin{lemma}[semigroup estimate]
\label{lem:semigest_nonloc}
 Let the assumptions of Theorem~\ref{thm:abkerest} holds. Then for every 
 $x \in \{x_i\}$ and $t > 0$ we have 
 \[
   |\mathcal{K}_tf(x)-\mathcal{K}_{t,G}f(x)| 
   \le |{Quad}_{err}(t)| 
     + C_{d,\beta} \; t^{-\frac{(d+2)}{\beta}} \, \varepsilon \,
       \mathcal{Q}\left[d_g(x,\cdot)^2 |f(\cdot)|\right] 
     +  \mathcal{O}\left( t^{-\frac{(d+4)}{\beta}} \varepsilon^2 \right)  ,
 \]
where  
\begin{equation}\label{eq:quaderr}
 {Quad}_{err}(t) := \int_{\mathcal{M}} k(t,x,y) f(y)\, d\textup{vol} 
    - \mathcal{Q}[k(t,x,\cdot)f(\cdot)] . 
\end{equation}
\end{lemma}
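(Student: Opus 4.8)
The plan is to split the error $|\mathcal{K}_t f(x) - \mathcal{K}_{t,G} f(x)|$ into a quadrature part and a kernel-approximation part by inserting the intermediate quantity $\mathcal{Q}[k(t,x,\cdot)f(\cdot)]$. That is, I would write
\[
 \mathcal{K}_t f(x) - \mathcal{K}_{t,G} f(x) = \Big( \mathcal{K}_t f(x) - \mathcal{Q}[k(t,x,\cdot)f(\cdot)] \Big) + \Big( \mathcal{Q}[k(t,x,\cdot)f(\cdot)] - \mathcal{Q}[k_G(t,x,\cdot)f(\cdot)] \Big),
\]
and recognize the first bracket as exactly $\mathit{Quad}_{err}(t)$ from \eqref{eq:quaderr}. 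The triangle inequality then gives $|\mathcal{K}_t f(x) - \mathcal{K}_{t,G} f(x)| \le |\mathit{Quad}_{err}(t)| + |\mathcal{Q}[(k(t,x,\cdot)-k_G(t,x,\cdot))f(\cdot)]|$.

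For the second term, I would use linearity and monotonicity of the quadrature rule $\mathcal{Q}$ (which, being a sum of nonnegative weights times evaluations at the data points $\{x_i\}$, is a positive linear functional) to bound $|\mathcal{Q}[(k-k_G)f]| \le \mathcal{Q}[|k(t,x,\cdot)-k_G(t,x,\cdot)|\,|f(\cdot)|]$. At each data pair $(x,y)\in\{x_i\}\times\{x_i\}$ I then invoke Theorem \ref{thm:abkerest}, which gives the pointwise bound
\[
 |k(t,x,y) - k_G(t,x,y)| \le C_{d,\beta}\, d_g(x,y)^2 \big( t^{-(d+2)/\beta}\varepsilon \big) + \mathcal{O}\big( t^{-(d+4)/\beta}\varepsilon^2 \big).
\]
Substituting this inside the quadrature and again using linearity of $\mathcal{Q}$, the leading term becomes $C_{d,\beta}\, t^{-(d+2)/\beta}\varepsilon\, \mathcal{Q}[d_g(x,\cdot)^2 |f(\cdot)|]$ and the remainder is $\mathcal{Q}$ applied to an $\mathcal{O}(t^{-(d+4)/\beta}\varepsilon^2)$ term; since $f\in L^2$ and the $x_i$ lie on the compact manifold $\mathcal{M}$, $\mathcal{Q}[|f(\cdot)|]$ is a finite quantity independent of $\varepsilon$, so this remainder stays $\mathcal{O}(t^{-(d+4)/\beta}\varepsilon^2)$. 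Collecting the three pieces yields the claimed inequality.

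The only genuinely delicate point is the interchange of the pointwise asymptotic bound from Theorem \ref{thm:abkerest} with the quadrature functional: one must check that the implied constant in the $\mathcal{O}(t^{-(d+4)/\beta}\varepsilon^2)$ term is uniform over the finitely many pairs $(x_i,x_j)$ (it is, since $d_g$ is bounded on the compact $\mathcal{M}$ and $\widetilde\zeta(t,\beta)<1$ in the proof of Theorem \ref{thm:abkerest}), so that after applying $\mathcal{Q}$ the error term can be written with a single constant absorbed into the $\mathcal{O}$-notation. Everything else is bookkeeping with the triangle inequality and the positivity and linearity of $\mathcal{Q}$; I expect no real obstacle beyond stating these properties of the quadrature rule explicitly.
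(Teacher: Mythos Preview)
Your proposal is correct and follows essentially the same approach as the paper: insert the intermediate term $\mathcal{Q}[k(t,x,\cdot)f(\cdot)]$, recognize the first difference as $Quad_{err}(t)$, and bound the remaining quadrature of $(k-k_G)f$ using the pointwise estimate from Theorem~\ref{thm:abkerest}. Your additional remarks on the positivity/linearity of $\mathcal{Q}$ and the uniformity of the implied constant over the finite data pairs are valid refinements that the paper leaves implicit.
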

\begin{proof}
 From the definition of $\mathcal{K}$ and $\mathcal{K}_G$ we obtain that 
 \begin{align*}
  \mathcal{K}_tf(x)-\mathcal{K}_{t,G}f(x) 
   &= \int_{\mathcal{M}} k(t,x,y) f(y)\, d\textup{vol} 
    - \mathcal{Q}[k_G(t,x,\cdot)f(\cdot)] \\
   &= {Quad}_{err}(t) 
    + \mathcal{Q}[\left(k(t,x,\cdot)-k_G(t,x,\cdot)\right)f(\cdot)] 
 \end{align*}    
where ${Quad}_{err}(t)$ denotes the quadrature error \eqref{eq:quaderr}. 
Using the error estimate in Theorem~\ref{thm:abkerest} we then arrive at 
 \[
   |\mathcal{K}_tf(x)-\mathcal{K}_{t,G}f(x)| 
   \le |{Quad}_{err}(t)| 
     + C_{d,\beta} \left( t^{-\frac{(d+2)}{\beta}} \varepsilon \right)
       \mathcal{Q}\left[d_g(x,\cdot)^2 |f(\cdot)|\right] 
     +  \mathcal{O}\left( t^{-\frac{(d+4)}{\beta}} 
         \varepsilon^2 \right) 
 \]
and the proof is complete. 
\end{proof}
We conclude this section with an error estimate for the generator. 

\begin{theorem}
 Let the assumptions of Lemma~\ref{lem:semigest_nonloc} holds. Then for every 
 $x \in \{x_i\}$, we have
 \[
  \lim_{t\rightarrow 0} \frac{f(x)-\mathcal{K}_{t,G}f(x)}{t} = (-\Delta)^{\beta/2} f(x) 
 \]
 where $(-\Delta)^{\beta/2}$ is a generator of $\mathcal{K}_t$, provided 
 $t^{-1} \, {Quad}_{err}(t)$ converges to 0 as $t \rightarrow 0$. 
\end{theorem}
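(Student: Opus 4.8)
The plan is to split the finite-difference quotient into a continuum part and a graph-approximation part,
\[
\frac{f(x)-\mathcal{K}_{t,G}f(x)}{t} = \frac{f(x)-\mathcal{K}_tf(x)}{t} + \frac{\mathcal{K}_tf(x)-\mathcal{K}_{t,G}f(x)}{t},
\]
and to control each summand separately. For the first summand I would appeal only to the continuum theory: by Definition~\ref{def:nonloc}, Theorem~\ref{heatdichotomy}(b), and the identification recorded after Definition~\ref{specdef} (via the bounds of \cite{gimperlein2014heat}), the nonlocal kernel $k$ coincides, up to the admissible constants, with the heat kernel $G_{\beta/2}$ of $(-\Delta)^{\beta/2}$, so that $\mathcal{K}_t=e^{-t(-\Delta)^{\beta/2}}$ has $(-\Delta)^{\beta/2}$ as its infinitesimal generator. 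Hence, for $f$ in the domain of $(-\Delta)^{\beta/2}$ (which is implicit in the statement and can be guaranteed by taking $f$ sufficiently regular, as in the local case above), $\tfrac{f(x)-\mathcal{K}_tf(x)}{t}\to(-\Delta)^{\beta/2}f(x)$ as $t\to0^+$.

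For the second summand I would invoke Lemma~\ref{lem:semigest_nonloc}, divide its right-hand side by $t$, and bound the three pieces. The quadrature piece $t^{-1}|{Quad}_{err}(t)|\to0$ is exactly the standing hypothesis. For the remaining two pieces, note that $\mathcal{Q}[d_g(x,\cdot)^2|f(\cdot)|]\le C$ uniformly in $t$: since $\mathcal{M}$ is compact one has $d_g(x,\cdot)^2\le\mathrm{diam}(\mathcal{M})^2$, and for a consistent quadrature rule the weights approximate $d\textup{vol}$, so $\mathcal{Q}[d_g(x,\cdot)^2|f(\cdot)|]\le C\,\mathrm{diam}(\mathcal{M})^2\|f\|_{L^1(\mathcal{M})}<\infty$. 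Thus the second summand is $\mathcal{O}\!\left(t^{-1-(d+2)/\beta}\varepsilon\right)+\mathcal{O}\!\left(t^{-1-(d+4)/\beta}\varepsilon^2\right)$, and the key point is that the graph tolerance $\varepsilon=\min\{\varepsilon_1,\varepsilon_2\}$ of Theorem~\ref{thm:Ten} is a free parameter that we let shrink with $t$. Choosing $\varepsilon=\varepsilon(t)$ with, e.g., $\varepsilon(t)=t^{\,2+(d+2)/\beta}$ kills both terms: the first because the exponent becomes $-1-(d+2)/\beta+2+(d+2)/\beta=1>0$, and the second because it becomes $-1-(d+4)/\beta+2\bigl(2+(d+2)/\beta\bigr)=3+d/\beta>0$. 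Combining with the first summand yields the asserted limit.

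The main obstacle --- and the point I would take care to make explicit --- is the coupling between $\varepsilon(t)$ and the data. Theorem~\ref{thm:Ten} only produces $|d_G(x,y)-d_g(x,y)|\le\varepsilon\,d_g(x,y)$ once Graph conditions I and II hold, and these tighten as $\varepsilon\to0$, forcing the covering constant $C(d_g,\{x_i\})$, hence the point set $\{x_i\}$ itself, to refine. Consequently the limit $t\to0$ in the statement must be read as a joint/refining limit in which the data set grows; the sampling-density assumptions discussed after the geodesic-convexity remark guarantee that for $N$ large enough the covering scale required by $\varepsilon(t)$ is attained with high probability, and the consistency of $\mathcal{Q}$ makes the bound on $\mathcal{Q}[d_g(x,\cdot)^2|f(\cdot)|]$ uniform in $t$. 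A secondary, minor point is that the continuum convergence $\tfrac{f-\mathcal{K}_tf}{t}\to(-\Delta)^{\beta/2}f$ holds a priori in $L^2(\mathcal{M})$, so one restricts to sufficiently regular $f$ to obtain the pointwise statement at $x$, exactly as was done for the local kernels earlier in this section.
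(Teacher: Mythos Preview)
Your proposal is correct and follows essentially the same route as the paper: the same decomposition into a continuum generator term and a graph-approximation error term, the same appeal to Lemma~\ref{lem:semigest_nonloc} for the latter, and the same device of letting $\varepsilon$ depend on $t$ so that $t^{-1-(d+2)/\beta}\varepsilon\to 0$. Your version is in fact more careful than the paper's, which silently drops the $\mathcal{O}(t^{-(d+4)/\beta}\varepsilon^2)$ term and does not explicitly bound $\mathcal{Q}[d_g(x,\cdot)^2|f(\cdot)|]$; your explicit choice $\varepsilon(t)=t^{2+(d+2)/\beta}$ and the discussion of the data-refinement coupling make the argument more complete.
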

\begin{proof}
 Consider the limit 
 \begin{align*}
  \lim_{t\rightarrow 0} \frac{f(x)-\mathcal{K}_{t,G}f(x)}{t} 
   &= \lim_{t\rightarrow 0} \frac{f(x)-\mathcal{K}_{t}f(x)}{t} 
    +\lim_{t\rightarrow 0} \frac{\mathcal{K}_t f(x)-\mathcal{K}_{t,G}f(x)}{t} \\
   &= (-\Delta)^{\beta/2} f(x) 
    +\lim_{t\rightarrow 0} \frac{\mathcal{K}_t f(x)-\mathcal{K}_{t,G}f(x)}{t} 
 \end{align*}
where { $(-\Delta)^{\beta/2}$ is the generator of $\mathcal{K}_t$ in compact manifold setting (see \cite[Theorem 4.2]{gimperlein2014heat}).} It then
remains to show that, 
\[\lim_{t\rightarrow 0} \frac{\mathcal{K}_t f(x)-\mathcal{K}_{t,G}f(x)}{t} = 0.\] 
Using Lemma~\ref{lem:semigest_nonloc} we deduce that 
\[
 \frac{|\mathcal{K}_t f(x)-\mathcal{K}_{t,G}f(x)|}{t} 
 \le t^{-1} \, |{Quad}_{err}(t)| + C_{d,\beta} t^{-\frac{(d+2)}{\beta}-1} \varepsilon ,
\]
Choose $\varepsilon >0$ such that $\lim_{t\rightarrow 0} 
t^{-\frac{(d+2)}{\beta}-1} \varepsilon = 0$. Then the result follows after using the assumption 
$\lim_{t\rightarrow 0} t^{-1} \, |{Quad}_{err}(t)| = 0$.
\end{proof}

The quadrature error depends on how the data points are generated, and will typically decay as the number of data points increases and diverge as $t\to 0$ for a fixed number of data points.  Thus, insuring the condition $t^{-1} \, {Quad}_{err}(t)$ converges to 0 as $t \rightarrow 0$ will require assuming that the number of data points $N$ grows sufficiently fast as $t \to 0$.  We will discuss this issue more in the next section.

\section{Application to diffusion maps allowing for non-local kernels}\label{application}

In this section we {extend} the diffusion maps algorithm for non-local kernels.  In particular, the diffusion maps algorithm is designed to {be invariant to the sampling density of the data.  We will show below that the diffusion maps algorithm must be modified when non-local kernels in order to maintain this invariance.  Moreover, non-local kernels require approximation of geodesic distances on the manifold, and we use Dijkstra's algorithm to compute graph distances in order to approximate the geodesic distances.}

\subsection{Compensating for non-uniform sampling of data (right normalization)}\label{nonuniform}

At this point we should explain that for data applications one typically assumes only that each data point is sampled from a distribution having a smooth density function $q:\mathcal{M} \to (0,\infty)$ where $q$ is the density relative to the natural volume form on the manifold $d\textup{vol}$.  Since the data points are the only information we have about the manifold, the only quadrature rule that we have access to is the Monte-Carlo quadrature which says that
\[ \lim_{N\to\infty} \frac{1}{N}\sum_{j=1}^N f(x_j) = \mathbb{E}[f(X)] = \int_{y\in \mathcal{M}} f(y)q(y) \, d\textup{vol}. \]
So given a kernel function $k$ we can estimate the kernel integral operator by
\BEA
\lim_{N\to\infty} \mathcal{Q}[k(t,x,\cdot)f(\cdot)] &=& 
\lim_{N\to\infty} \frac{1}{N}\sum_{j=1}^N k(t,x,x_j)f(x_j) \nonumber \\ &=& \int_{y\in \mathcal{M}} k(t,x,y) f(y)q(y) \, d\textup{vol} = e^{t(-\Delta)^{\beta/2}}(fq)(x), \nonumber
\EEA
where the last equality holds for nonlocal kernels $k$ for $0<\beta<2$ as well as the local (Gaussian) kernel with $\beta=2$.
The fact that the sampling density, $q$, changes the operator as shown above was observed in the original diffusion maps paper \cite{diffusion} and they introduced a method to correct this by pre-dividing by a kernel density estimate.  Namely, let $J(t,x,y)$ be any smooth kernel function which decays exponentially in $|x-y|$ (such as a standard Gaussian kernel), then we can use $J$ to estimate the density $q(x_i)$ since
\[ \lim_{N\to\infty}\frac{1}{N}\sum_{j=1}^N J(\tau,x_i,x_j) = \int_{y\in \mathcal{M}} J(\tau,x_i,y) q(y) \, d\textup{vol} \propto q(x_i) + \mathcal{O}(\tau) \]
where the proportionality constant is due to the integral of the kernel $J$. {A detailed analysis of the error of the discrete estimate, $\hat q(x_i)=  \frac{1}{N}\sum_{j=1}^N J(t,x_i,x_j)$, of $q(x_i)$\cite{singer,variablebandwidth},  shows that the error variance is of order $\mathcal{O}(N^{-1}\tau^{-1-d/2})$, where $d$ is the dimension of the manifold.  Notice that since the Monte-Carlo error diverges as $\tau\to 0$, one finds that the optimal $\tau$ value is a function of the amount of data, $N$. Thus,} we can pre-divide by the estimate to fix our quadrature rule
\[  \lim_{N\to\infty} \frac{1}{N}\sum_{j=1}^N \frac{k(t,x,x_j)}{\hat q(x_j)}f(x_j) \propto \int_{y\in \mathcal{M}} k(t,x,y) f(y) \frac{q(y)}{q(y)+\mathcal{O}({t})} \, d\textup{vol} \]
and as long as {$t \ll \min_y q(y)$ then $\frac{q(y)}{q(y)+\mathcal{O}(t)} \approx 1$} and we will have a good estimate of the desired integral.

Often we only want to evaluate the kernel on the data points, so we compute the kernel matrix $K_{ij} = k(t,x_i,x_j)$.  In this case, pre-dividing by the density estimate corresponds to right multiplication by the inverse of a diagonal matrix $D_{ii} = \sum_{j=1}^N J(t,x_i,x_j)$.  In fact, in the original diffusion maps algorithm, the kernel matrix $k$ had exponential decay, so they actually set $J=k$ and in this case the normalized kernel matrix $KD^{-1}$ is simply dividing each column by the column sum ($K$ is symmetric).  {We should note that the diffusion maps algorithm uses a symmetric form of this normalization, $D^{-1}KD^{-1}$, however the next step (left normalization) cancels the effect of multiplying by $D^{-1}$ on the left.  The symmetric version of the `right normalization' is important for maintaining a symmetric matrix for numerical reasons, for a more detailed explanation see Section \ref{algorithm} below and \cite{localkernels}.}

Since polynomial kernels do not give density estimators with the same error as exponential kernels, we will always set $J$ to be a standard Gaussian kernel.  The matrix $KD^{-1}$ is called the `right-normalized' kernel and is an estimator of the heat kernel in the sense that if $f$ is a smooth function defined on the manifold, then we can discretize $f$ as $\vec f_i = f(x_i)$ and the matrix-vector product $\left(KD^{-1}\vec f \right)_i$ is a pointwise estimator of the operator $e^{t(-\Delta)^{\beta/2}}f(x_i)$ (up to a constant which may depend on $t$).

\subsection{Removing proportionality constants and low order error terms (left normalization)}\label{leftnormsection}

As we have seen above, the choice of kernel function and the normalizations to remove sampling bias can introduce many constants of proportionality.  In this section we motivate the `left normalization' as a method of removing the influence of these unknown constants. It turns out (as first shown in \cite{diffusion}), that this normalization removes certain types of error terms.  Recall that the heat semigroup was assumed to be stochastically complete, namely $\mathcal{K}_t 1(x) = 1$ for all $t$.  The discrete version of this property is that the matrix which approximates the operator should be row stochastic (each row should sum to 1).  Notice that even if the appropriate constants in the expansion of $\mathcal{K}_t$ were used in constructing the discrete version, due to the higher order error terms the resulting matrix would not be exactly row stochastic.  

To motivate this normalization, assume that for a kernel function $\tilde{k}$ we have 
\[ \int_{\mathcal{M}} \tilde{k}(t,x,y)f(y)\,d\textup{vol} = c(x)t^a \mathcal{K}_t f(x)(1+t^{b_1} \omega(x) + \mathcal{O}(t^{b_2})) \] 
for some $a \in\mathbb{R}$ and $b_2 \geq b_1 > 0$.  Since $\mathcal{K}_t 1 = 1$, plugging $f(x)=1$ into the above equation we have $\int_{\mathcal{M}} \tilde{k}(t,x,y)\,d\textup{vol} = c(x) t^a(1+t^{b_1} \omega(x) + \mathcal{O}(t^{b_2}))$ so
\[ \frac{\int_{y\in \mathcal{M}}\tilde{k}(t,x,y)f(y)\, d\textup{vol}}{\int_{y\in \mathcal{M}}\tilde{k}(t,x,y)\, d\textup{vol}} = \frac{\mathcal{K}_t f(x)(1+t^{b_1} \omega(x) + \mathcal{O}(t^{b_2}))}{(1+t^{b_1}\omega(x) + \mathcal{O}(t^{b_2}))} = \mathcal{K}_t f(x)(1+\mathcal{O}(t^{b_2})).  \]
Notice that order-$t^{b_2}$ terms do not necessarily cancel since they could depend on derivatives of $f$, the cancellation of the $t^{b_1}$ term is only valid since the function $\omega(x)$ is assumed to be fixed and independent of $f$. { In fact, letting $\tilde{k}$ to be $h$ in \eqref{kernelh}, this equation is identical to \eqref{equality1} with $b_2=2-2/\alpha$.}  

The final step in the standard diffusion maps algorithm is motivated by the above computation, and estimates the normalized heat kernel (thus removing any proportionality constants appearing in the kernel or constants arising from the right normalization).  Since $KD^{-1}$ is an estimator of the heat kernel, we can estimate applying the kernel to the identity function by computing the row sums $\hat D_{ii} = \sum_{j=1}^N \left(KD^{-1}\right)_{ij}$.  The diagonal matrix $\hat D_{ii}$ containing the row sums is then used to divide each row by the row sum, forming the `left normalized' kernel matrix $\hat D^{-1}KD^{-1}$.  This is the matrix we would like to compute the eigenvalues and eigenvectors of, however it is clearly not symmetric, so in the next section we discuss a numerical scheme which converts this to a symmetric eigenproblem.

\subsection{Numerical dichotomy in the diffusion maps algorithm}\label{algorithm}

Given data $\{\iota(x_i)\}_{i=1}^N \subset \iota(\mathcal{M})\subset \mathbb{R}^n$ (where $x_i \in \mathcal{M}$ are sampled on the manifold but only their embedded coordinates $\iota(x_i)$ are available as a data set) the first step of the algorithm will always be to compute the matrix of pairwise distances $d_{ij} = |\iota(x_i) - \iota(x_j)|$.  We should note that in fact the algorithm will only require the distances to the $\kappa$-nearest neighbors of each point, however determining an appropriate neighbor parameter $\kappa$ will depend on the bandwidth chosen (with bandwidth parameter $\epsilon$), and so for simplicity we will simply consider computing all of the pairwise distances.  

Next we need to choose a bandwidth $\epsilon$ which will have `units' of `distance-squared', since we will ultimately apply our kernel function $\Phi$ to the ratio $\frac{d}{\sqrt{\epsilon}}$.  In the examples in the next section we will consider a large range of bandwidth parameters to demonstrate how different kernel functions respond to the bandwidth.  For more information on tuning the bandwidth we refer the reader to \cite{IDM}.  We only mention that a good heuristic is to take the average of squared distances from each point to its $\kappa$-th nearest neighbor where $\kappa$ is typically on the order of $\log N$.  

We now enter into the dichotomy, in the first case (local kernels) we can use the standard diffusion maps algorithm \cite{diffusion} (described below).  However, in the second case (nonlocal kernels) we will need to first apply Dijkstra's algorithm to estimate the geodesic distances, and we will also need to modify the normalization procedure used in diffusion maps.

In the first case of the dichotomy we consider an $\alpha$-local kernel such as $\Phi(s) = e^{-s^\alpha}$.  In this case, because the kernel has fast decay and localizes the distances, we can directly apply the kernel to the matrix of pairwise distances to form the kernel matrix $K_{ij} = \Phi\left(\frac{d_{ij}}{\sqrt{\epsilon}}\right)$.  Here we are implicitly choosing $\sqrt{\epsilon} = t^{1-1/\alpha} = t^{1/\beta}$ so we can define $t = \epsilon^{\beta/2}$ (recall that $\beta = \frac{\alpha}{\alpha-1}$).  Next, we apply the `right normalization', defining a diagonal matrix $D_{ii} = \sum_{j=1}^N K_{ij}$ and the normalized kernel matrix $\tilde K = D^{-1}KD^{-1}$.  Notice that instead of only dividing the columns by the column sums (as described in the previous section) we divide both the rows and columns by the column sums.  This is to maintain the symmetry of the matrix $\tilde K$, and one can easily see that the left multiplication by $D^{-1}$ will be cancelled out in the `left normalization' step.  

The next step is the `left normalization', where we define a diagonal matrix $\tilde D_{ii} = \sum_{j=1}^N \tilde K_{ij}$ and the final kernel matrix $\tilde D^{-1}\tilde K$, however this would not yield a symmetric eigenproblem.  (Notice that $\tilde D$ is not the same as $\hat D$, but one can show that $\tilde D^{-1}\tilde K$ is the same as $\hat D^{-1} K D^{-1}$ from the previous section).  Instead of solving the eigenproblem $\tilde D^{-1}\tilde K v = \lambda v$, we first multiply both sides by $\tilde D^{1/2}$ to write the problem as
$\tilde D^{-1/2}\tilde K \tilde D^{-1/2}\tilde D^{1/2}v = \lambda \tilde D^{1/2}v$.  
Then setting $w = \tilde D^{1/2}v$, we see that we can solve the symmetric eigenproblem for $\hat K = \tilde D^{-1/2}\tilde K \tilde D^{-1/2}$ to find the eigenvalues $\eta_j$ and eigenvectors $\psi_j$ of the symmetric matrix $\hat K$.  Finally, to produce estimates of the eigenfunctions we form the vectors $\varphi_j = \tilde D^{-1/2}\psi_j$ so that $\varphi_j$ solves the original non-symmetric eigenvalue problem.  Alternatively one can use a generalized eigensolver, but we have found this approach to be more numerically stable.  Finally, the eigenvalues $\eta_j$ estimate the eigenvalues of the semigroup $e^{-t\Delta}$, so to estimate the eigenvalues of $\Delta$ we set $\lambda_j = \frac{-\log(\eta_j)}{t} = \frac{-\log(\eta_j)}{\epsilon^{\beta/2}}$.

In the second case of the dichotomy, we consider a non-local kernel such as $\Phi(x) = (1+s)^{-(d+\beta)}$.  In this case, because the kernel does not have fast decay, we need to first estimate the geodesic distances.  Since we only have access to the data in the embedding space, we first compute the Euclidean distances $d_{ij} = |\iota(x_i) - \iota(x_j)|$, however only the short Euclidean distances will be good approximations of geodesic distances.  However, based on the results in Section \ref{nonlocal}, we can approximate the geodesic distance by first building a weighted graph containing only edges between points with Euclidean distance less than $\delta$ and then computing the graph (shortest path) distance $d_G(\iota(x_i),\iota(x_j))$ between all pairs of points using Dijkstra's algorithm.  Once we estimate the matrix of graph distances $G_{ij} = d_G(\iota(x_i),\iota(x_j))$, then we can apply the kernel $\Phi$ to form the kernel matrix $K_{ij} =\Phi\left(\frac{G_{ij}}{\sqrt{\epsilon}}\right)$.  The algorithm then proceeds exactly as above, applying the two normalizations and solving the symmetric eigenproblem.  In this case, the eigenvalues $\eta_j$ estimate the eigenvalues of the semigroup $e^{t(-\Delta)^{\beta/2}}$, so to estimate the eigenvalues of $(-\Delta)^{\beta/2}$ we define $\lambda_j$ the same as above namely, $\lambda_j = \frac{-\log(\eta_j)}{t} = \frac{-\log(\eta_j)}{\epsilon^{\beta/2}}$.

{
\begin{algorithm}[H] % enter the algorithm environment
{
\caption{Fractional Diffusion Map Algorithm} % give the algorithm a caption
\label{FDM} % and a label for \ref{} commands later in the document
\begin{algorithmic} % enter the algorithmic environment
	\State {\bf Inputs:} Data set, $\{\iota(x_i)\}_{i=1}^N \subset \iota(\mathcal{M}) \subset \mathbb{R}^n$, fractional power, $\beta$, bandwidth, $\epsilon$, intrinsic dimension, $d$, and number of requested eigenvectors, $\ell$
	\State {\bf Outputs:} $N\times N$ matrix, $H$, approximating the fractional heat kernel, eigenvectors $v_i$ and eigenvalues $\lambda_i$ estimating the eigenfunctions and eigenvalues of the fractional Laplacian
	\State
	\Indent
	\State Compute the $N \times N$ matrix of pairwise distances $A_{ij} = |\iota(x_i) - \iota(x_j)|$
	\State Compute the $N\times 1$ density estimate 
	$\hat q_i = \frac{(2\pi\epsilon)^{-d/2}}{N} \sum_{j=1}^N \exp\left(-\frac{A_{ij}^2}{2\epsilon} \right)$
	\State Set $t = \epsilon^{\beta/2}$
	\If{$\beta \geq 2$} 
		\State Set $\alpha = \frac{\beta}{\beta-1}$
		\State Compute the $N\times N$ local kernel matrix $K_{ij} = \exp\left(-\left(\frac{A_{ij}}{\sqrt{\epsilon}}\right)^{\alpha} \right)$
	\Else 	
		\State Zero out each entry of $A$ with $A_{ij} \geq \sqrt{\epsilon}$
		\State Construct weighted graph $G$ with $N$ nodes and adjacency matrix $A$
		\State Compute the $N\times N$ matrix $d_G$ of pairwise graph distances in $G$
		\State Compute the $N\times N$ nonlocal kernel matrix $K_{ij} = \left(1+\frac{(d_G)_{ij}}{\sqrt{\epsilon}}\right)^{-d-\beta}$
	\EndIf
	\State Form the diagonal `right normalization' matrix with $D_{ii} = \hat q_i$
	\State Perform the symmetric `right normalization', $\tilde K = D^{-1}K D^{-1}$
	\State Form the diagonal `left normalization' matrix with $\tilde D_{ii} = \sum_{j=1}^N \tilde K_{ij}$
	\State Define the Markov matrix, $H = \tilde D^{-1}\tilde K$
	\State Perform the symmetric `left normalization', $\hat K = \tilde D^{-1/2}\tilde K \tilde D^{-1/2}$
	\State Ensure numerical symmetry by setting $\hat K = (\hat K + \hat K^\top)/2$
	\State Compute the $\ell+1$ eigenvectors $\psi_0,...,\psi_{\ell}$ of $\hat K$ with maximal eigenvalues  \[1=\eta_0 \geq \eta_1 \geq \cdots \geq \eta_\ell \]
	\State Compute the fractional Laplacian eigenvalues $\lambda_i = - t^{-1}\log \eta_i$
	\State Compute the approximate eigenfunctions $\varphi_i = \tilde D^{-1/2}\psi_i$
	\EndIndent
	\State Return heat kernel $H$, approximate eigenfunctions $\varphi_i$, and eigenvalues $\lambda_i$, satisfying $H\varphi_i = e^{t\lambda_i}\varphi_i$
    \State 
\end{algorithmic}
}
\end{algorithm}
}

%%%%%%%%%%%%%%%%%%%%%%%%%%%%%%%%%%%%%%%%%%%%%%%%%%%%%%%

\section{Numerical examples}\label{numerics}

In this section we show some numerical results to verify the above theory and to compare the different kernel functions.  In particular we will be interested in the effect of the number of data points, the bandwidth parameter $t$, and how the data points are distributed on the manifold.  Tuning the bandwidth $t$ is a matter of balancing the bias error with the quadrature error.  Specifically, the bias error is the $\mathcal{O}(t^{2-2/\alpha})$ term in Theorem \ref{localthm}, and the {quadrature error is the} $\mathcal{O}(t^{-\frac{d+2}{\beta}}\varepsilon)$ term in Theorem \ref{lem:semigest_nonloc}, notice that $\varepsilon$ must depend on $t$ so that $\lim_{t\to 0} t^{-\frac{d+2}{\beta}}\varepsilon = 0$ so the bias term is decreased by taking $t$ small.  For randomly sampled data the variance/quadrature error is $\mathcal{O}(N^{-1}t^{-1-d/2})$ as described in Section \ref{nonuniform} above, and the variance term is decreased by taking $t$ large.  As we will see below, the quadrature error is significantly smaller when the data set is a uniform grid of points rather than randomly sampled data points.  Thus for a uniform grid of data points we will be able to take $t$ much smaller and obtain much lower error with very small data sets.  Of course, uniform grids are unlikely to occur in real data sets and so these results are intended only to demonstrate and numerically validate the above theory.  Results for the randomly sampled data are much more indicative of what should be expected for most data sets.  

Note that {the analysis of diffusion maps begins with the pointwise convergence of \cite{diffusion} that is generalized above, but a more subtle issue is the spectral convergence.  A series of results starting with \cite{von2008consistency} and more recently \cite{spec1,spec2,spec3,berry2020spectral} show that for compact manifolds the eigenvectors of the diffusion maps discrete Laplacian matrix converge to the eigenfunctions of the Laplacian operator on the underlying manifold.  Thus, the diffusion maps algorithm} produces eigenvectors $\varphi_j$ such that the $i$-th entry is an estimate of the true eigenfunction evaluated on the data set $i$-th data point, so $(\varphi_j)_i \approx \phi_j(x_i)$.  Thus our standard measure of error will be the root mean squared error (RMSE) computed on the data set as $\left( \frac{1}{N}\sum_{i=1}^N ((\varphi_j)_i - \phi(x_i))^2 \right)^{1/2}$.  To evaluate { the accuracy of the eigenvalues, one aspect that we can examine is}  the power law associated to the growth of eigenvalues.  Weyl's law states that the eigenvalues of the Laplace-Beltrami operator have a power law growth $\lambda_j \propto j^{d/2}$ for $j$ large {(for the examples under consideration below this power law appears accurate for $j$ fairly small, but generally it may require $j$ large).}  For the fractional Laplacians the eigenvalues are simply raised to the power $\beta/2$ so we have $\lambda_j \propto j^{\beta d/4}$.  

Below we consider three examples. First, we consider a unit circle where the spectral fractional Laplacian is identical to the integral definition \cite{NAbatangelo_EValdinoci_2017a}. The second example is a unit sphere in $\mathbb{R}^3$ where the spectral fractional Laplacian can be closely represented in an integral form \cite{alonso2018integral}. In these two cases, we can use the spectral definition to verify the accuracy of the estimated eigenfunctions. In the third example, we consider the closed and bounded unit interval for which the spectral fractional Laplacian is different from the integral definition. In this case, we will numerically verify that our fractional diffusion maps implemented with the non-local kernels yields estimates that are close to the regional fractional Laplacian. {Finally, we include a numerical example involving a supervised learning task and show how the nonlocal kernels are beneficial in recovering non-smooth regression functions.}

%%%%%%%%%%%%%%%%%%%%%%%%%%%%%%%%%%%%%%%%%%%%%%%%%%%%%%%%
\subsection{Example 1: Circle}
%%%%%%%%%%%%%%%%%%%%%%%%%%%%%%%%%%%%%%%%%%%%%%%%%%%%%%%%

In this section we first verify the above theory numerically by applying the two different approaches from the dichotomy to reconstruction various fractional Laplacians on the unit circle.  We then compare the exponential and polynomial kernels over a range of different methods of sampling data points from the unit circle.  For all $\beta$, the eigenfunctions of $(-\Delta)^{\beta/2}$ are the Fourier functions, $\phi_1 = 1$, and $\phi_{2j}(\theta) = \sin j\theta, \phi_{2j+1}(\theta) = \cos j\theta$ defined in the intrinsic coordinate $\theta \in [0,2\pi)$ on the unit circle and the associated eigenvalues are $\lambda_1 =0$ and $\lambda_{2j}=\lambda_{2j+1} =j^{\beta}$.  Notice that the repeated eigenvalue is due to the rotational symmetry, and the fact that each eigenspace is two dimensional means that a numerical eigensolver will find two linear combinations of $\sin j\theta$ and $\cos j\theta$.  Thus, in order to compute the error in the estimated eigenfunctions, we first estimate the best linear transformation (a 2-by-2 matrix) which maps each pair of the estimated eigenfunctions to the true eigenfunctions. 

\begin{figure}[h!]
\centering\includegraphics[width=0.48\textwidth]{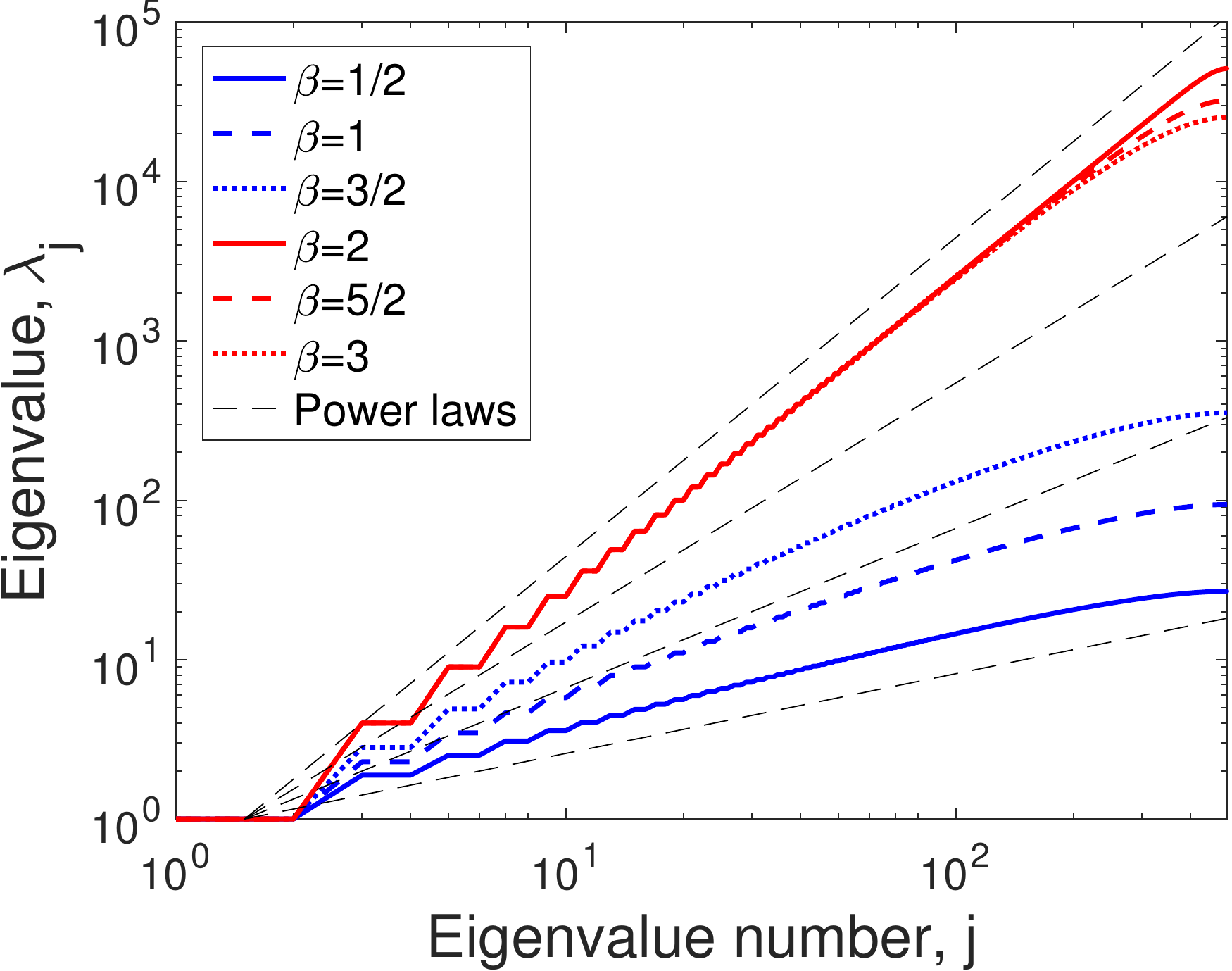}
\centering\includegraphics[width=0.48\textwidth]{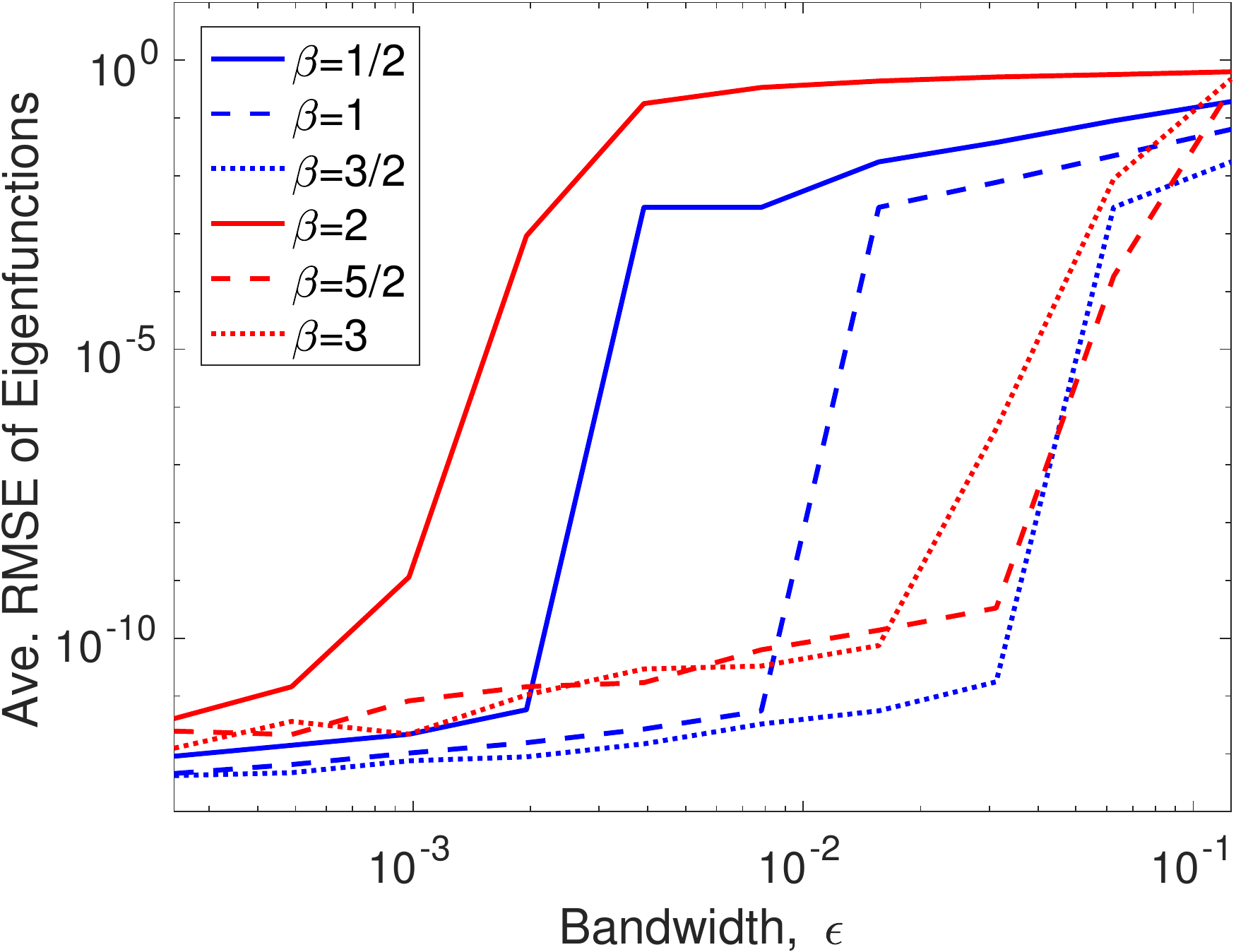}
\caption{Laplacian eigenfunction reconstructions for a {\bf uniform grid} of points sampled from the unit circle in the plane using various values of $\beta=2s$ using exponential (red) and polynomial (blue) kernels.  Left: For $\epsilon = 2^{-12}$ eigenspectra $\lambda_i$ show good agreement with the appropriate power laws (black dashed curves are, $j^2,j^{3/2},j^1,j^{1/2}$).  Right: Average RMSE of eigenfunctions compared to the true Fourier functions as a function of the bandwidth $\epsilon$.}
\label{figure1}
\end{figure}  

In order to verify the above theory, we first consider a grid of points uniformly spaced on the unit circle.  For $N=500$ we generated an equally spaced grid of $\theta_i = 2\pi i/N$ for $i=1,...,N$ and then mapped these points onto the unit circle with the standard embedding $x_i = (\cos(\theta_i),\sin(\theta_i))^\top$.    As mentioned above, the uniform grid yields a very low quadrature error and so the optimal choice of bandwidth $\epsilon$ was very small, on the order of $10^{-4}$ as shown in Fig.~\ref{figure1}(right).  A surprising result for this data set was that the standard Gaussian kernel, $\beta = 2s= 2$, was the least robust with respect to $\epsilon$.  Since the manifold is one-dimensional, Weyl's law for the Laplace-Beltrami operator gives eigenvalues $\lambda_j \propto j^2$.  Notice that since $\beta/2=1,5/4,3/2$ are all local kernels, the generator of each of these kernels is the Laplace-Beltrami operator (since we divide by $\epsilon^{\beta/2}$ as described in Section \ref{local}).  Thus, all of the spectra estimated from exponential kernels (red color in Fig.~\ref{figure1}) have the same power law growth, namely $j^2$.  We note that the error in the estimates of the eigenvalue grows as the eigenvalues increase, which explains the deviations from the power law for the largest eigenvalues; as the amount of data is increased and the bandwidth is decreased these eigenvalues would become more accurate and approach the power law.  On the other hand, the {polynomial} kernel (blue color in Fig.~\ref{figure1})) produces spectra power close to their corresponding $\beta/2=1/4,1/2,3/4$.

\begin{figure}[h!]
\centering\includegraphics[width=0.32\textwidth]{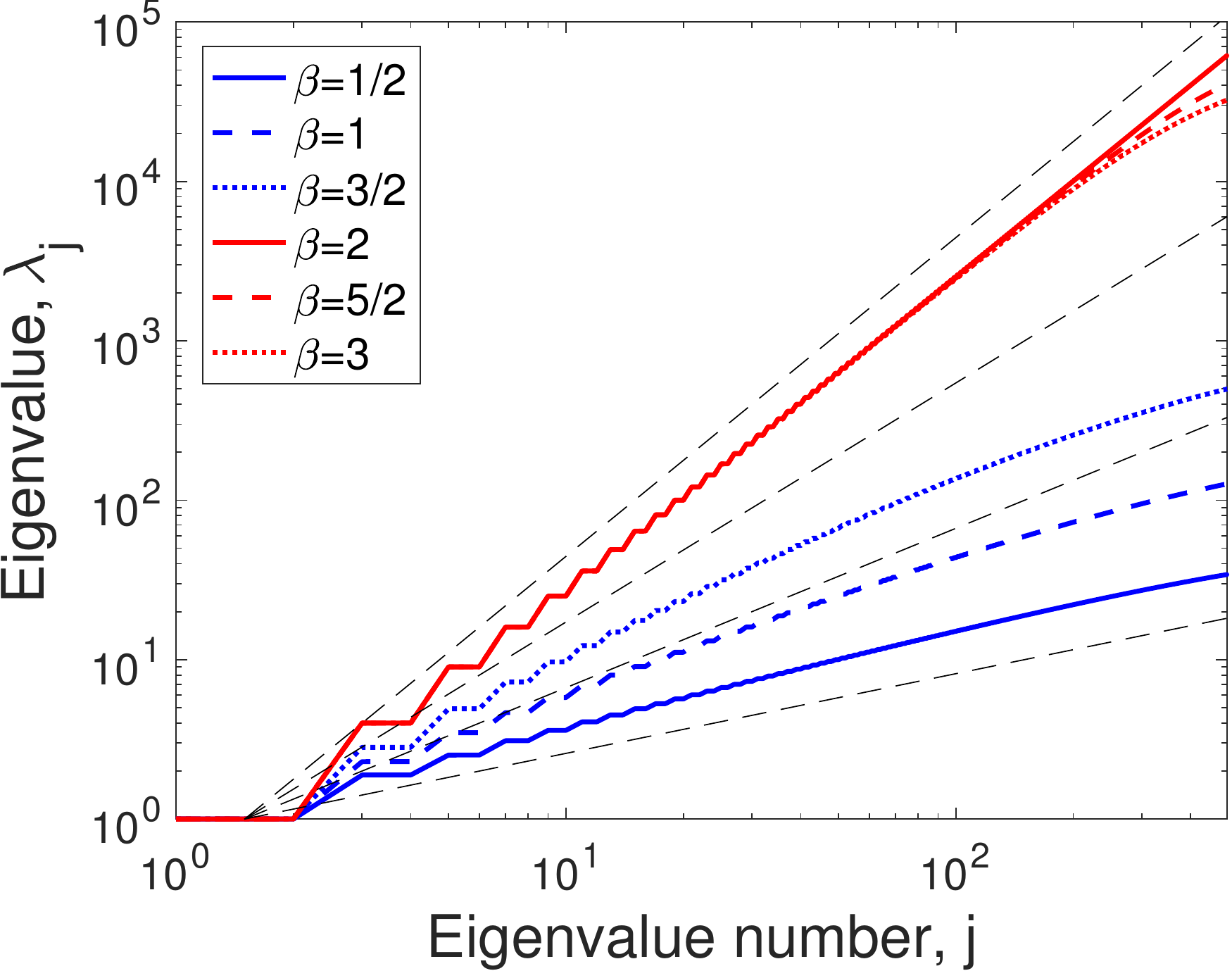}
\centering\includegraphics[width=0.32\textwidth]{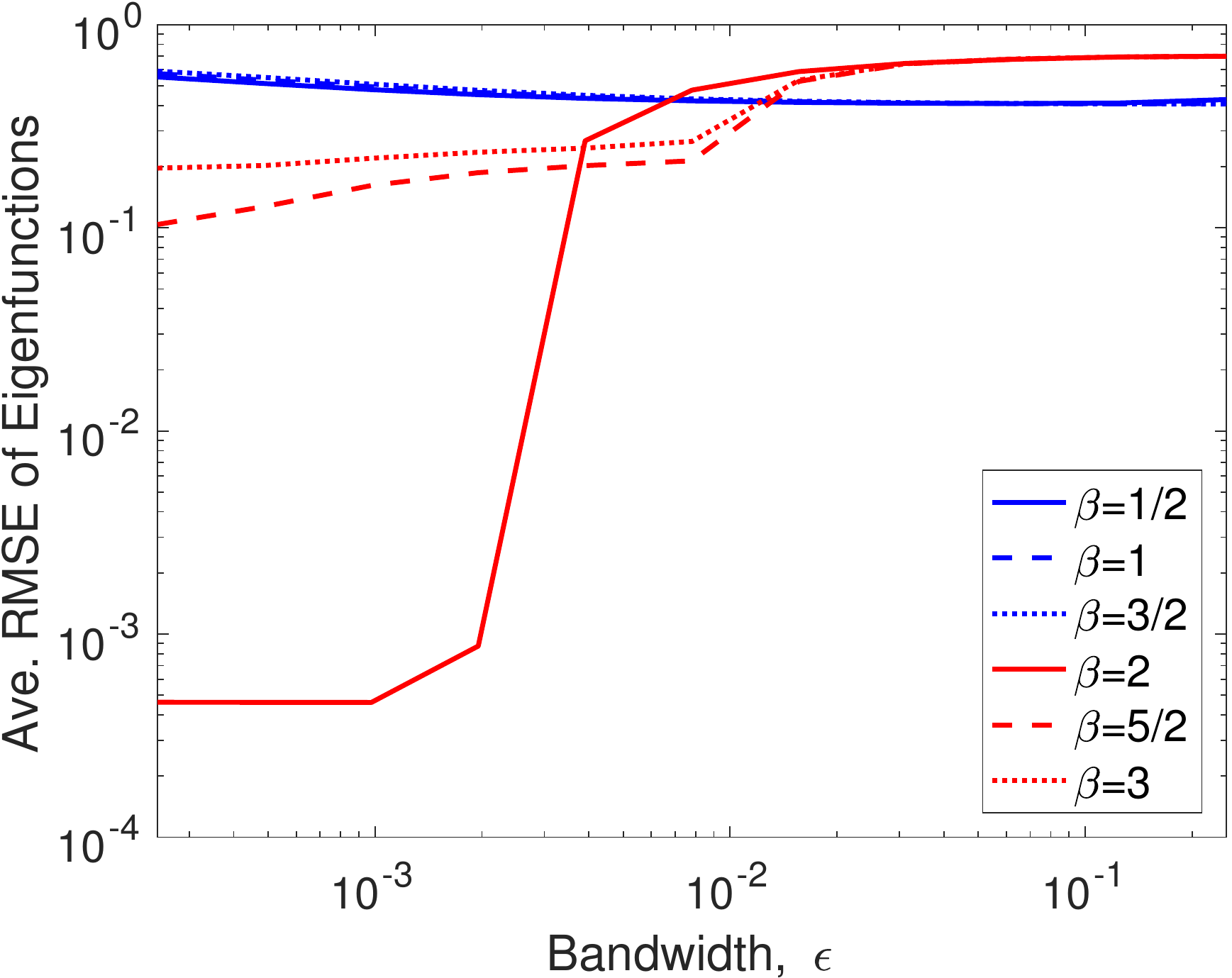}
\centering\includegraphics[width=0.32\textwidth]{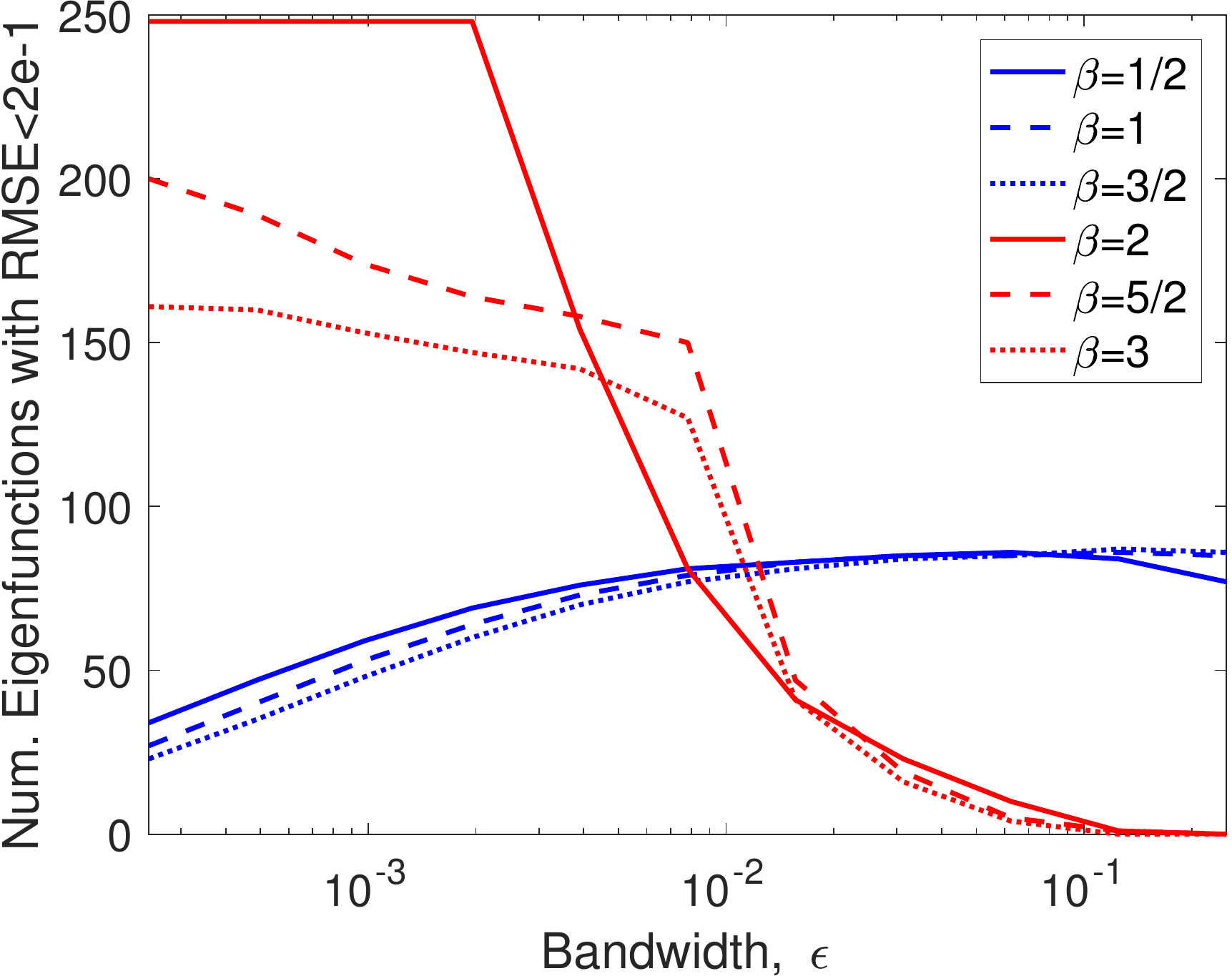}
\centering\includegraphics[width=0.32\textwidth]{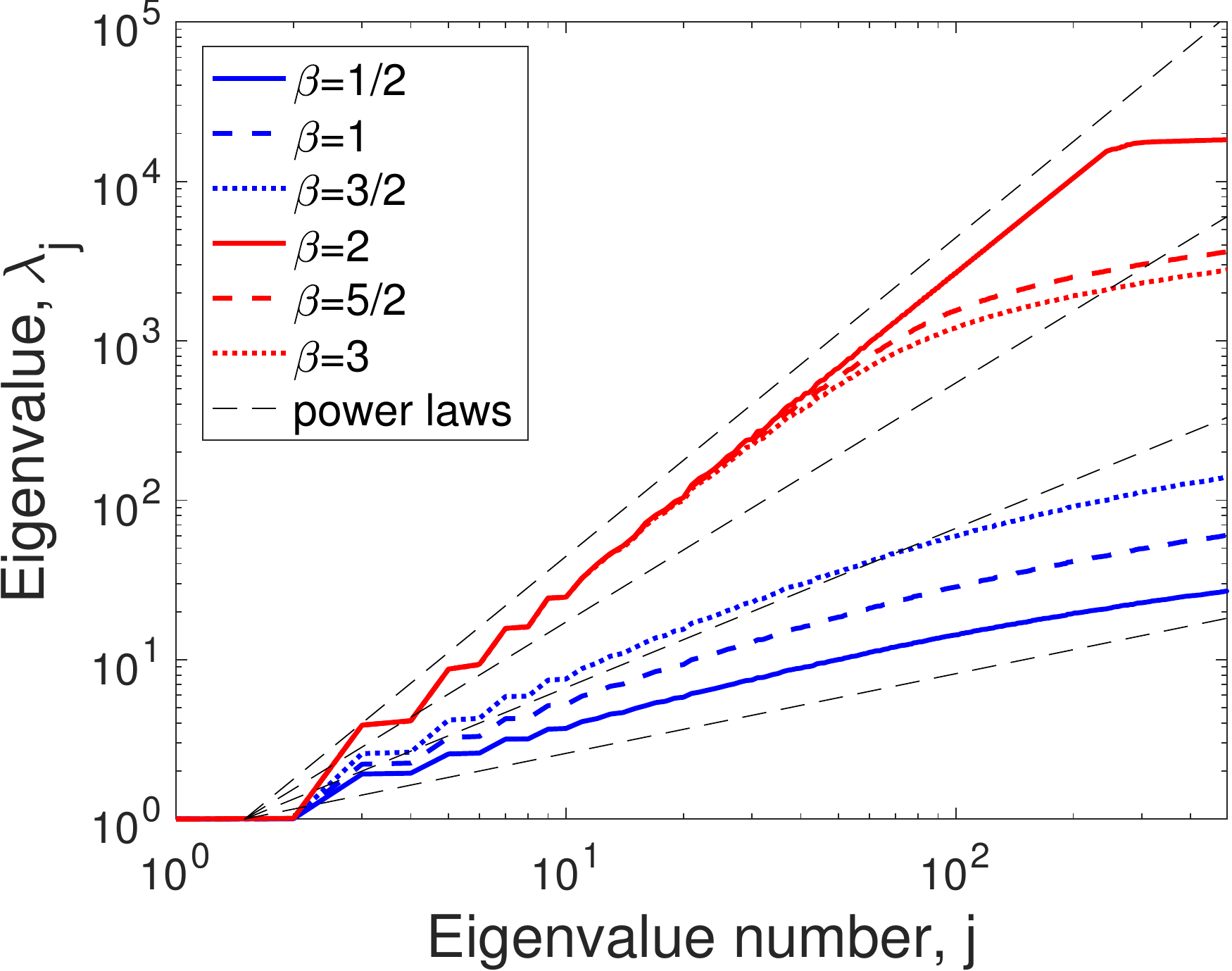}
\centering\includegraphics[width=0.32\textwidth]{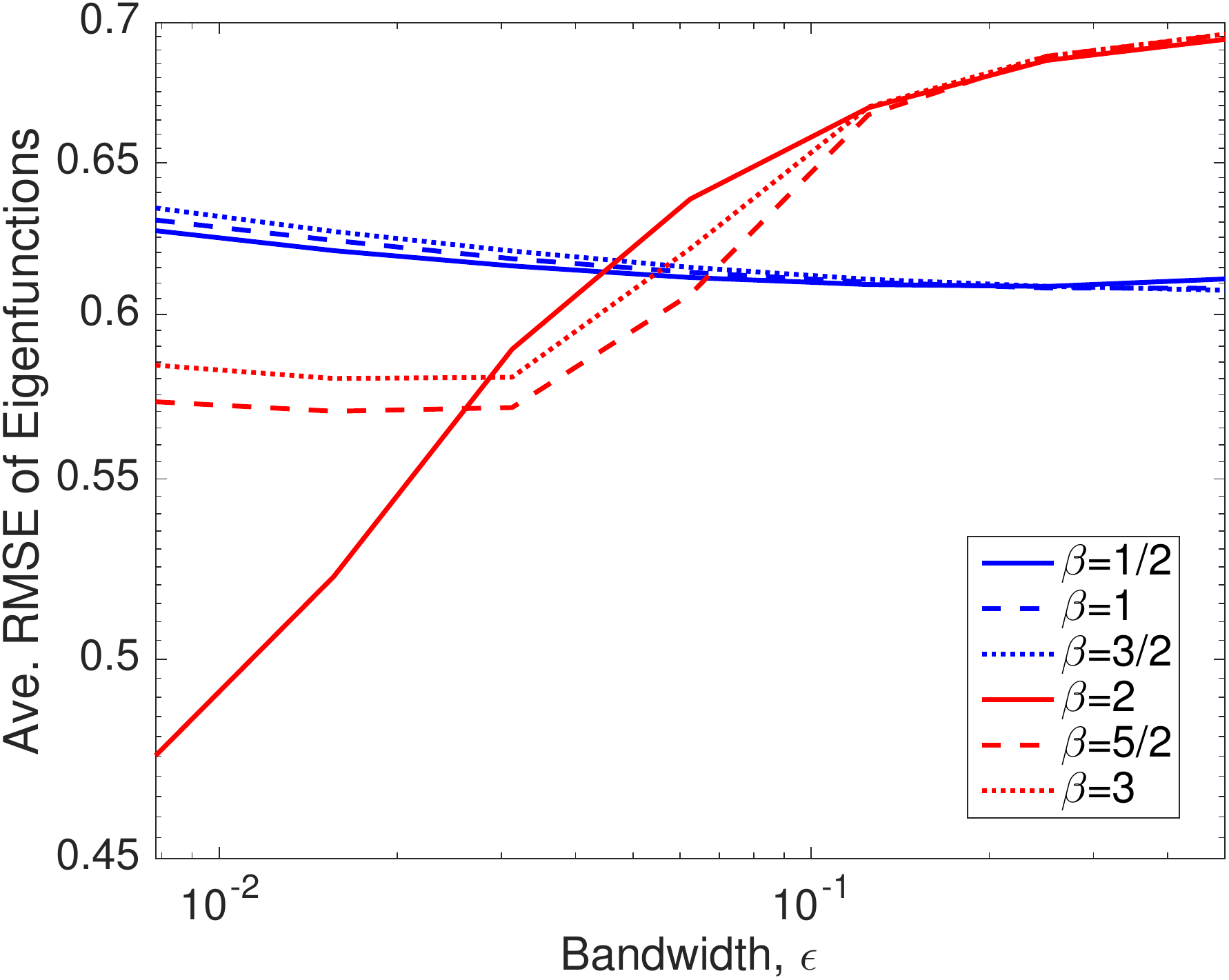}
\centering\includegraphics[width=0.32\textwidth]{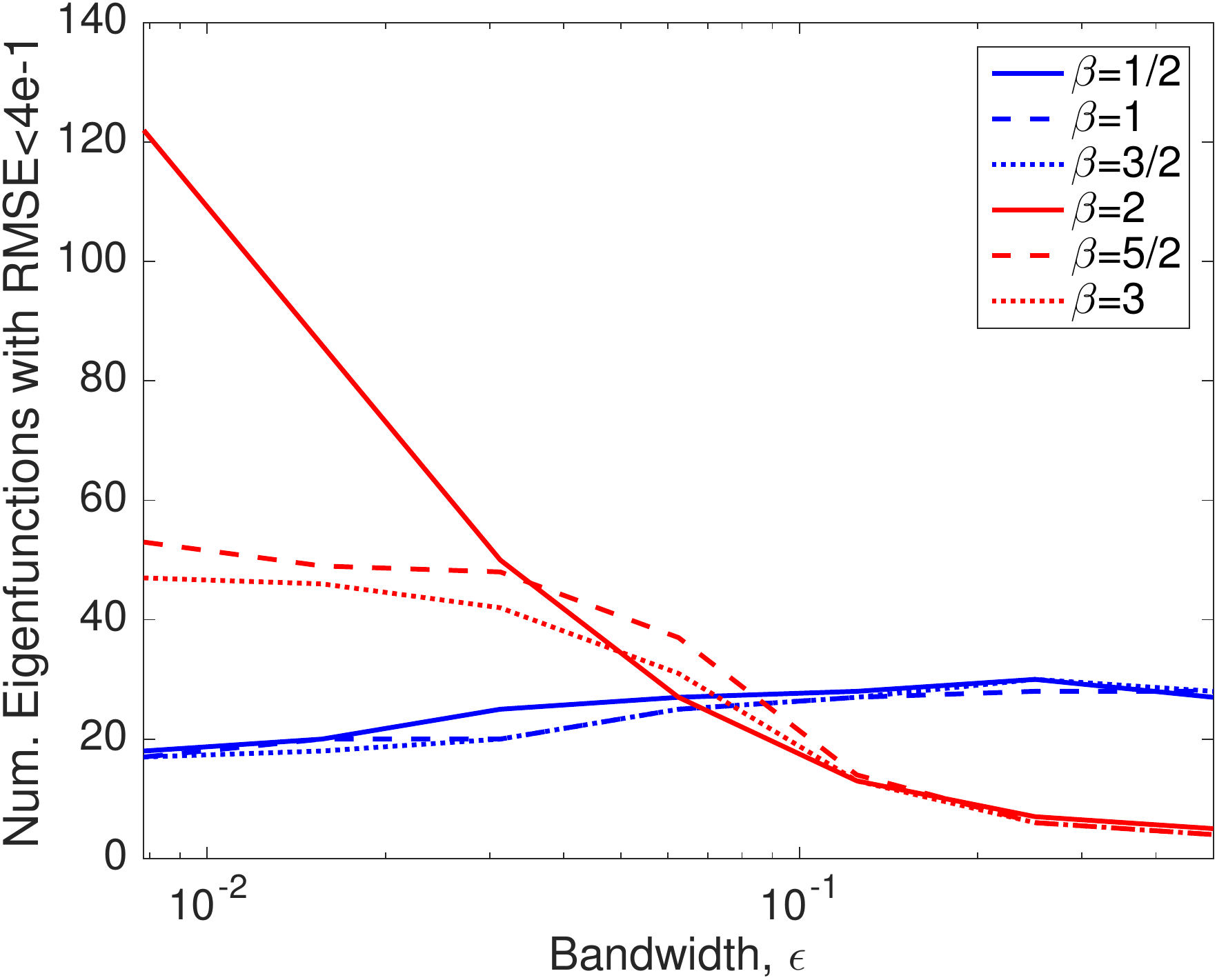}
\caption{Laplacian eigenfunction reconstructions for a {\bf non-uniform grid} of points (top row) and for a {\bf uniformly random sample} (bottom row) of points on the unit circle in the plane using various values of $s$ using exponential (red) and polynomial (blue) kernels.  Left: Spectra (as in Fig.~\ref{figure1}).  Middle: Average RMSE of eigenfunctions as a function of the bandwidth $\epsilon$.  Right: Number of eigenfunctions with RMSE less than $0.2$ (top) and $0.4$ (bottom) as a function of the bandwidth $\epsilon$.  Note that the polynomial kernel has a more consistent performance over a larger range of bandwidths, but the exponential kernel has the best performance with an optimally tuned bandwidth.}
\label{figure2}
\end{figure}  

Next we consider a non-uniform grid of points by applying the nonlinear mapping $\tilde \theta_i = \theta_i - \sin(\theta_i)/2$ and then defining the data set $x_i = (\cos\tilde\theta_i,\sin\tilde\theta_i)^\top$.  We say this is a `grid' since the points are not pseudo-random, but it is non-uniform in the sense that the grid spacing is non-uniform.  The results for this data set are shown in the top row of Fig.~\ref{figure2}.  For this dataset the errors in many of the eigenfunctions increase significantly, however the eigenfunctions associated to small eigenvalues are still reasonably well estimated (see Fig.~\ref{figure3}).  To give a better measure of performance, we also plot the number of eigenfunctions that have RMSE less than $0.2$ in Fig.~\ref{figure2}(top, right).  As in the previous example, the results of the polynomial kernels was more robust to the choice of bandwidth than the exponential kernels.  However, in this example the performance of the exponential kernel (particularly with $\beta=2$) was significantly better than the polynomial kernels when the bandwidth is optimally tuned for each kernel. This is not surprising since the fractional Laplacian in this setting (that takes $L^2$ functions defined on flat and periodic domain) is exactly the spectral fractional Laplacian as defined in Definition~\ref{specdef} where the Gaussian kernel is the heat kernel of the semigroup of Laplacian.

Finally, in the bottom row of Fig.~\ref{figure2} we show the results for a pseudo-random data set $\theta_i = 2\pi r_i$ where $r_i$ is a  psuedo-random value sampled uniformly from $[0,1)$.  Again, the polynomial kernels are more robust to choice of bandwidth, but for the optimally tuned bandwidth the $\beta=2$ kernel has a pronounced advantage.  In Fig.~\ref{figure3} we show that for the eigenfunctions associated to the smallest eigenvalues the polynomial kernel actually has the smallest error (the error is often less than half that of the exponential kernel).  However, for non-uniform or pseudo-random data the exponential kernel maintains a lower error for higher frequency eigenfunctions.

\begin{figure}[h!]
\centering\includegraphics[width=0.48\textwidth]{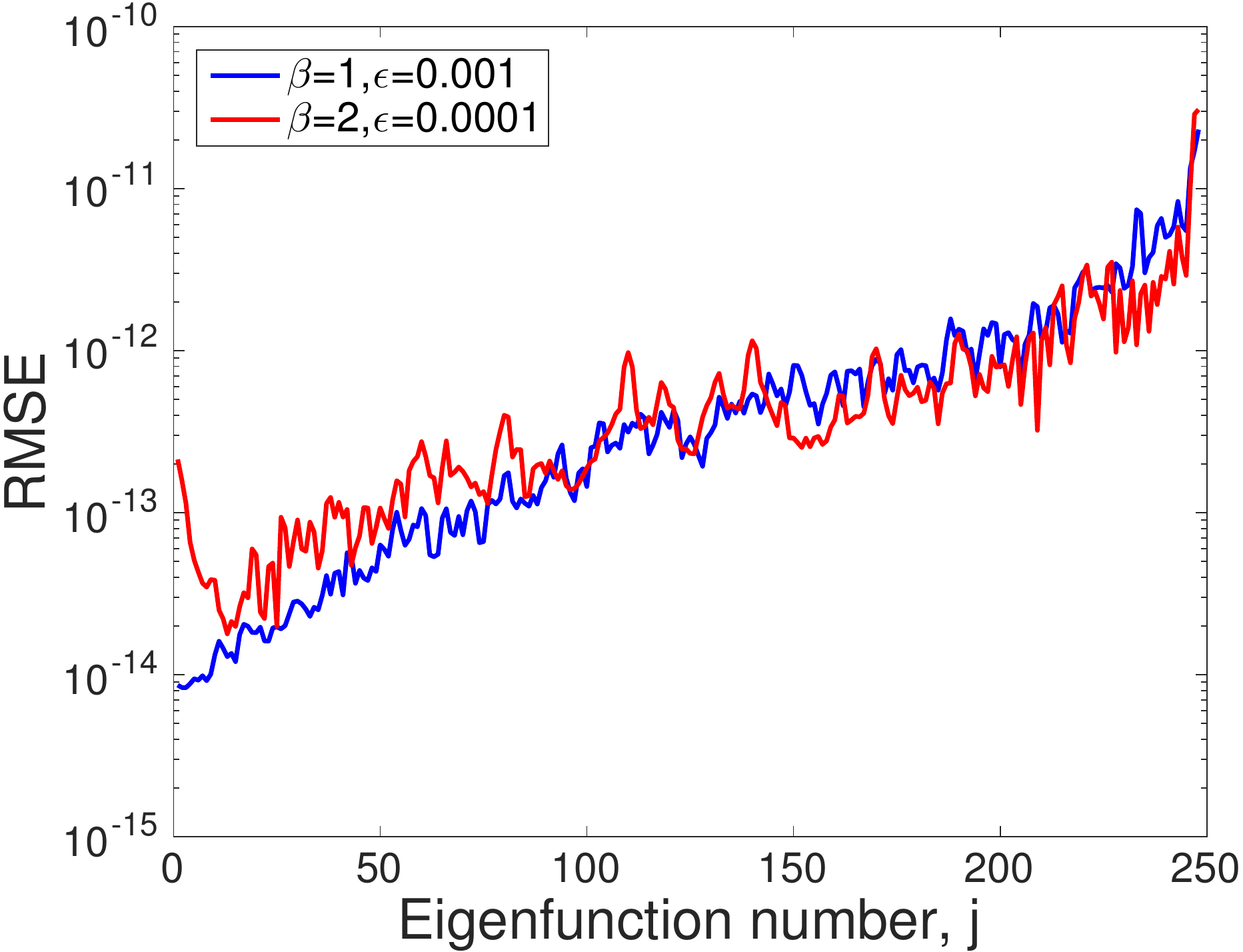}
\centering\includegraphics[width=0.48\textwidth]{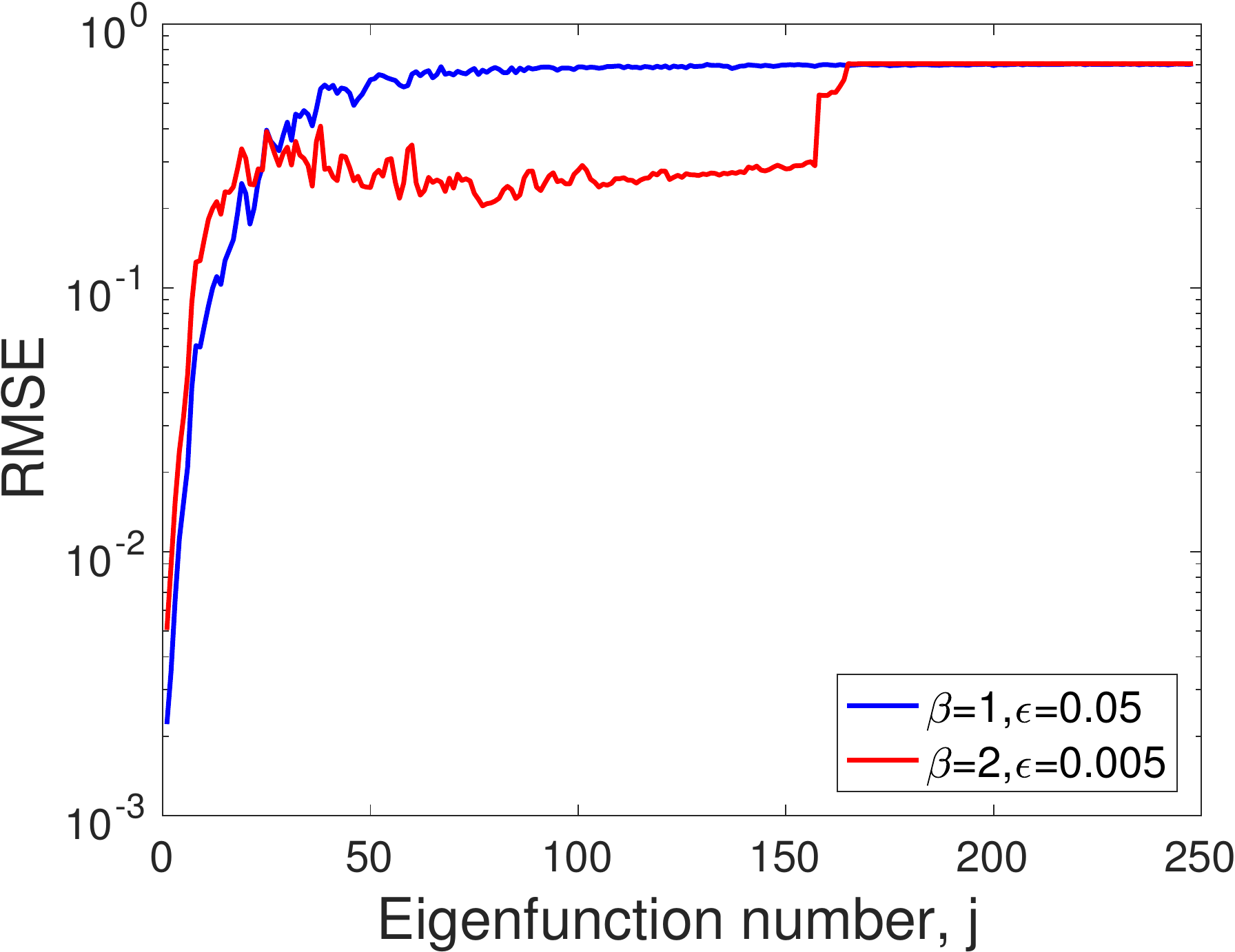}
\caption{Comparison of exponential (red) and polynomial (blue) kernels in terms of eigenfunction RMSE with optimal bandwidth for each kernel.  Left: For uniform grid sampled form the unit circle performance of the two kernels is comparable when well-tuned.  Right: For a uniformly random sample of points on the unit circle, the polynomial kernel has the best reconstruction for the low frequency eigenfunctions, and the exponential kernel has the best reconstruction for the middle frequency eigenfunctions.}
\label{figure3}
\end{figure}

%%%%%%%%%%%%%%%%%%%%%%%%%%%%%%%%%%%%%%%%%%%%%%%%%%%%%%%%
\subsection{Example 2: Sphere}
%%%%%%%%%%%%%%%%%%%%%%%%%%%%%%%%%%%%%%%%%%%%%%%%%%%%%%%%

The unit circle is a simple yet instructive test example to analyze since it is one-dimensional and has no curvature, which means that reasonably accurate results can be obtained with very small data sets.  In this example we consider the sphere which is two-dimensional and has non-zero curvature and thus will require much larger data sets. This is particularly problematic for the polynomial kernels because of our reliance on Dijkstra's algorithm to compute graph distances which approximate geodesic distances.  Optimized methods for computing the graph distances between all pairs of points still have computational complexity which grows cubically with the number of data points, which quickly becomes computationally infeasible.  However, a nice feature of the unit sphere in $\mathbb{R}^3$ is that the geodesic distance between points $x,y$ on the manifold can easily be computed as $\cos^{-1}(\iota(x)\cdot \iota(y))$ (where $\iota(x),\iota(y) \in \mathbb{R}^3$ are the embedded data points).  While this is obviously not a method which can be used for real data sets, it is helpful in this context to verify the theoretical results above.  Moreover, we expect that thorough error analysis may reveal that, for a given $t$, only pairs of points with sufficiently small graph distances may need to be computed, which could allow the computational complexity to be reduced below cubic growth in the number of data points.  In the examples below we will use Dijkstra's algorithm to estimate the geodesic distances except when using polynomial kernels with $N=10242$ data points in the bottom row of Fig.~\ref{figure5}.

Another challenge of the sphere is generating a uniform or approximately uniform grid of data points.  To solve this problem we used a Matlab package called GridSphere written by the authors of \cite{gridsphere} which describes their method.  We should note that this method produces an approximately uniform grid, so for example, with $N=2562$ data points, each point has 6 nearest neighbors whose distances differ on the order of $10^{-4}$ (see Fig.~\ref{figure4} middle and right for the $N=2562$ grid).  

\begin{figure}[h!]
\centering
\includegraphics[width=.3\textwidth]{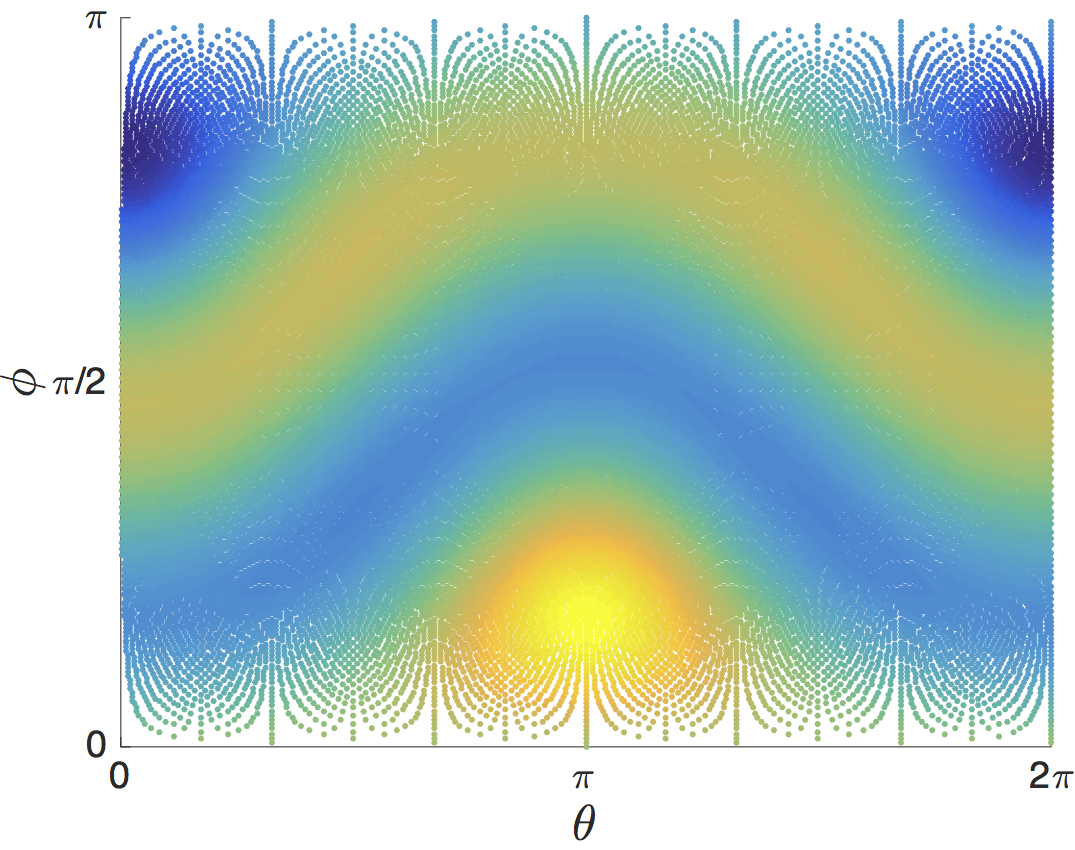}
\includegraphics[width=.3\textwidth]{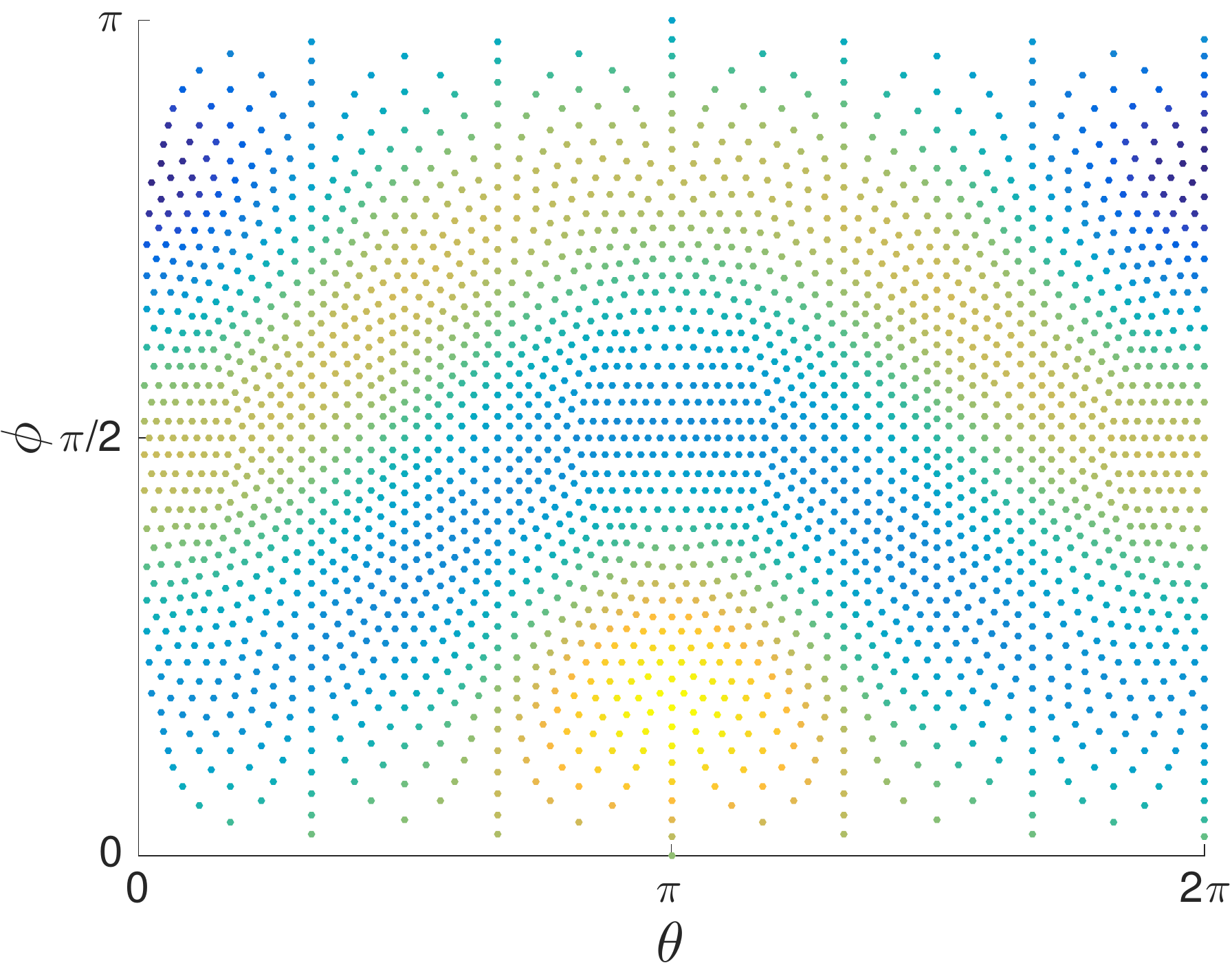}
\includegraphics[width=.34\textwidth]{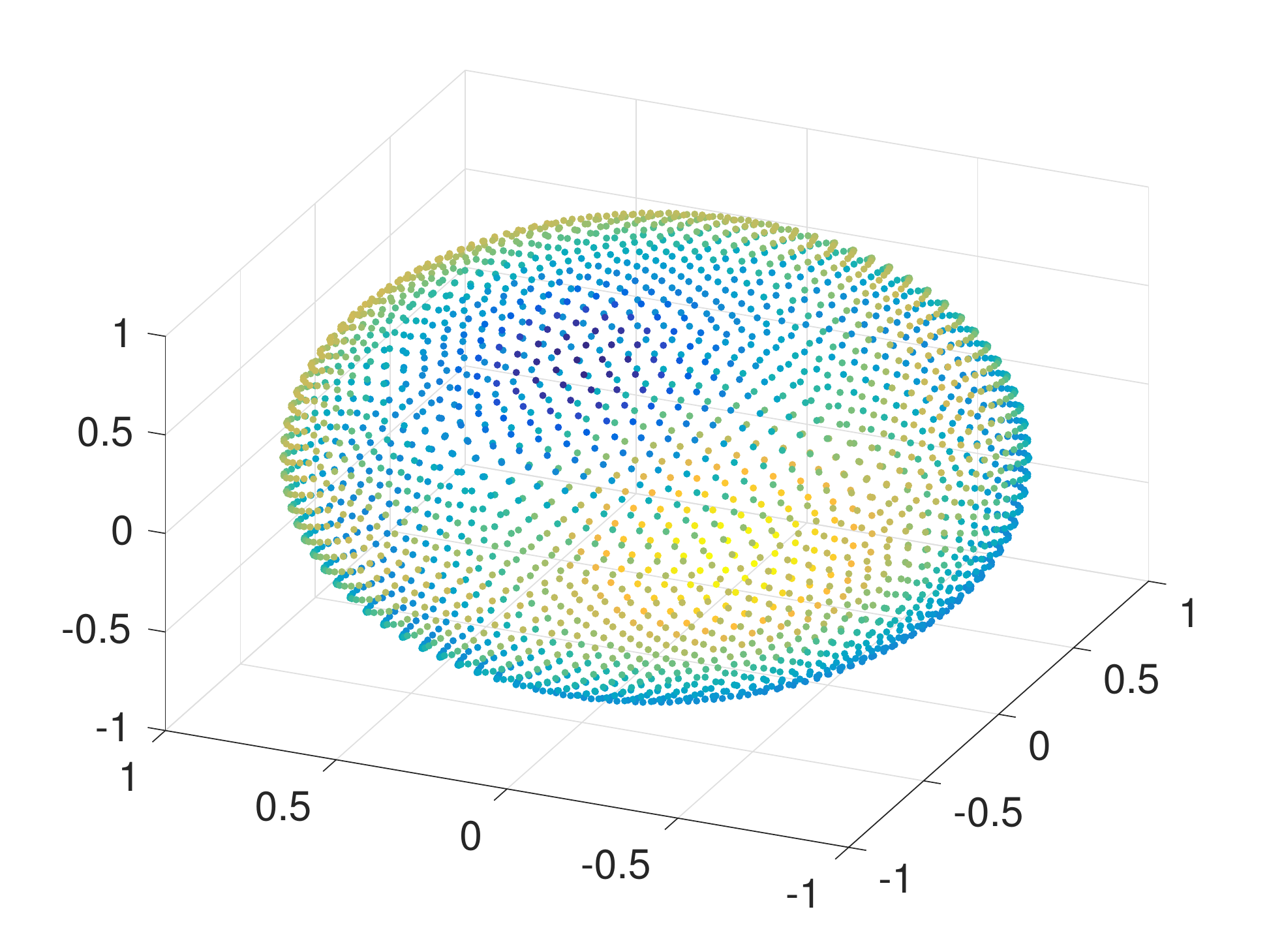}
\caption{The 10-th Laplacian eigenfunction, $\varphi_{10}(\theta,\phi)$, estimated for a nearly uniform grid of points sampled from the unit sphere in $\mathbb{R}^3$ using $s=1$ (exponential kernel) with $N=40962$ points (left) which is then subsampled to a nearly uniform grid of $N=2562$ points (middle, right).  Sample points were generated using the Matlab GridSphere package \cite{gridsphere}.  Eigenfunction is shown as a function of the polar coordinates $(\theta,\phi) \in [0,2\pi) \times [0,\pi)$ for the sphere (left,middle) and on the embedded sphere (right).}
\label{figure4}
\end{figure}  

Finally, in order to define a ground truth for comparison, we first generated an approximately uniform grid of $N=40962$ data points, and used the exponential kernel with $\beta=2$ to estimate the eigenfunctions.  Since the $N=2562$ and $N=10242$ grids are subsets of the $N=40962$ grid, we were able to decimate these eigenfunctions to find their values on the subgrids.  See Fig.~\ref{figure4} for the estimate of $\varphi_{10}$ on the $N=40962$ grid (left) and the decimated estimate on the $N=2562$ grid (middle).  We used these estimates as our `truth' for the eigenfunctions. 

Next we apply the diffusion maps algorithm for the exponential (red) and polynomial (blue) kernels with various values of $\beta$ and various values of $\epsilon$ (see Fig.~\ref{figure5} first two rows $N=2562$, third row $N=10242$).  In Fig.~\ref{figure5} (top,left) we show the eigenvalues for $N=2562$ have good agreement with Weyl's law; since the sphere is 2-dimensional the Laplacian eigenvalues grow like $\lambda_j \propto j^{d/2} = j^1$.  This power law was observed for $\beta \geq 2$ since all the exponential kernels recover the Laplace-Beltrami operator, and for $\beta<2$ we observe reasonable agreement with the theoretical growth of $\lambda_j \propto j^\beta$.

\begin{figure}[h!]
\centering \hspace{-5pt}\includegraphics[width=0.42\textwidth]{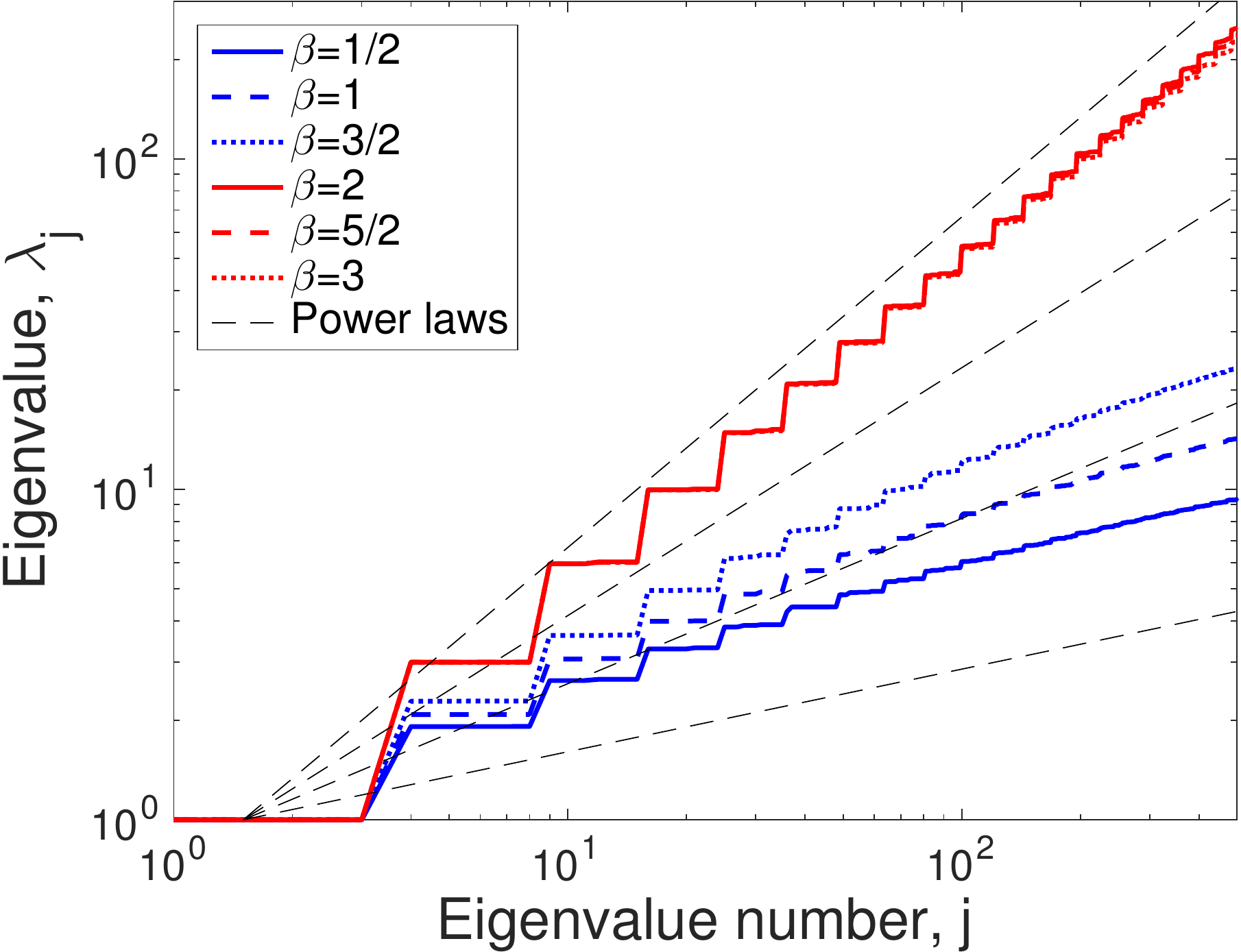}\hspace{5pt}
\centering\includegraphics[width=.4\textwidth]{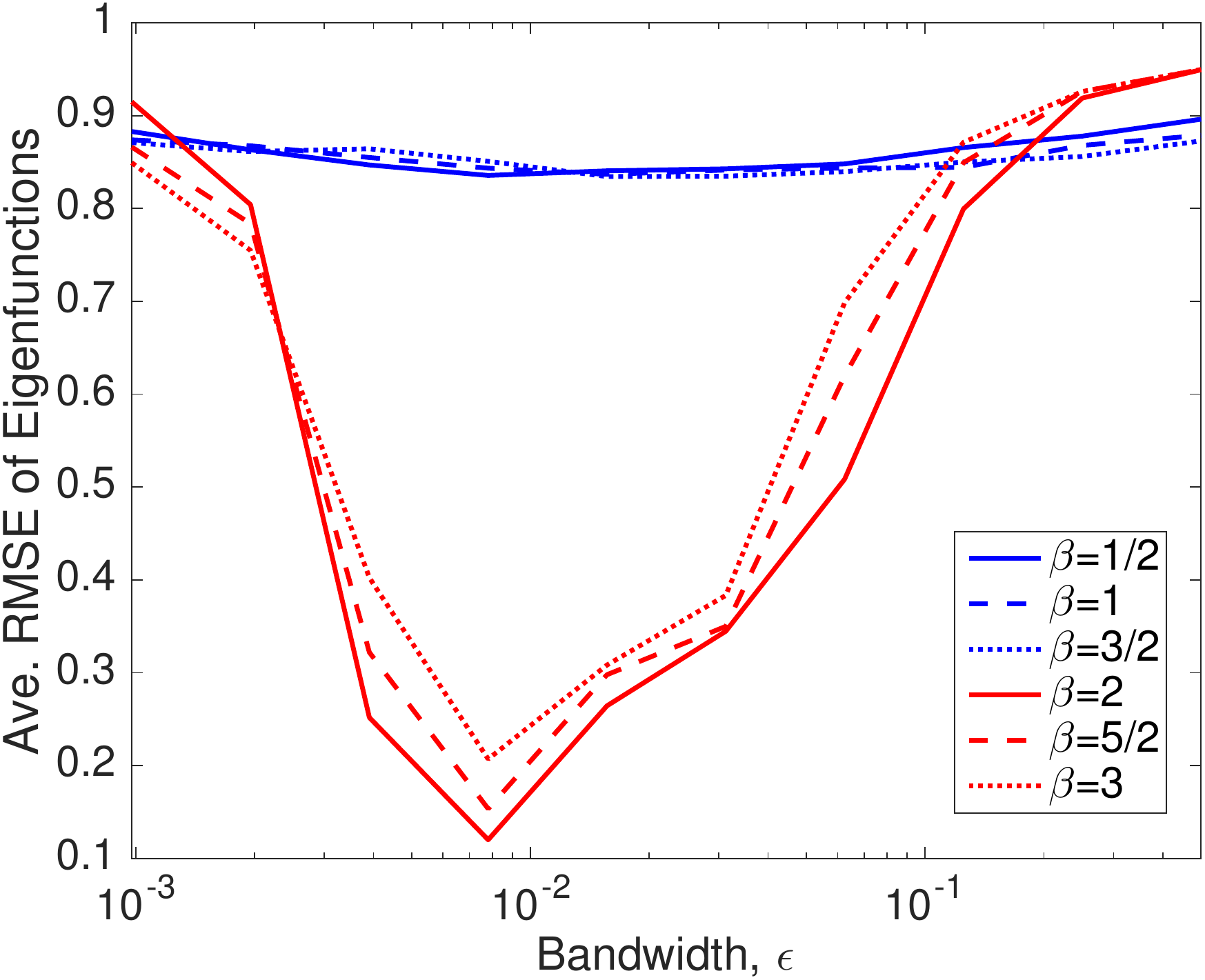}
\centering\includegraphics[width=0.4\textwidth]{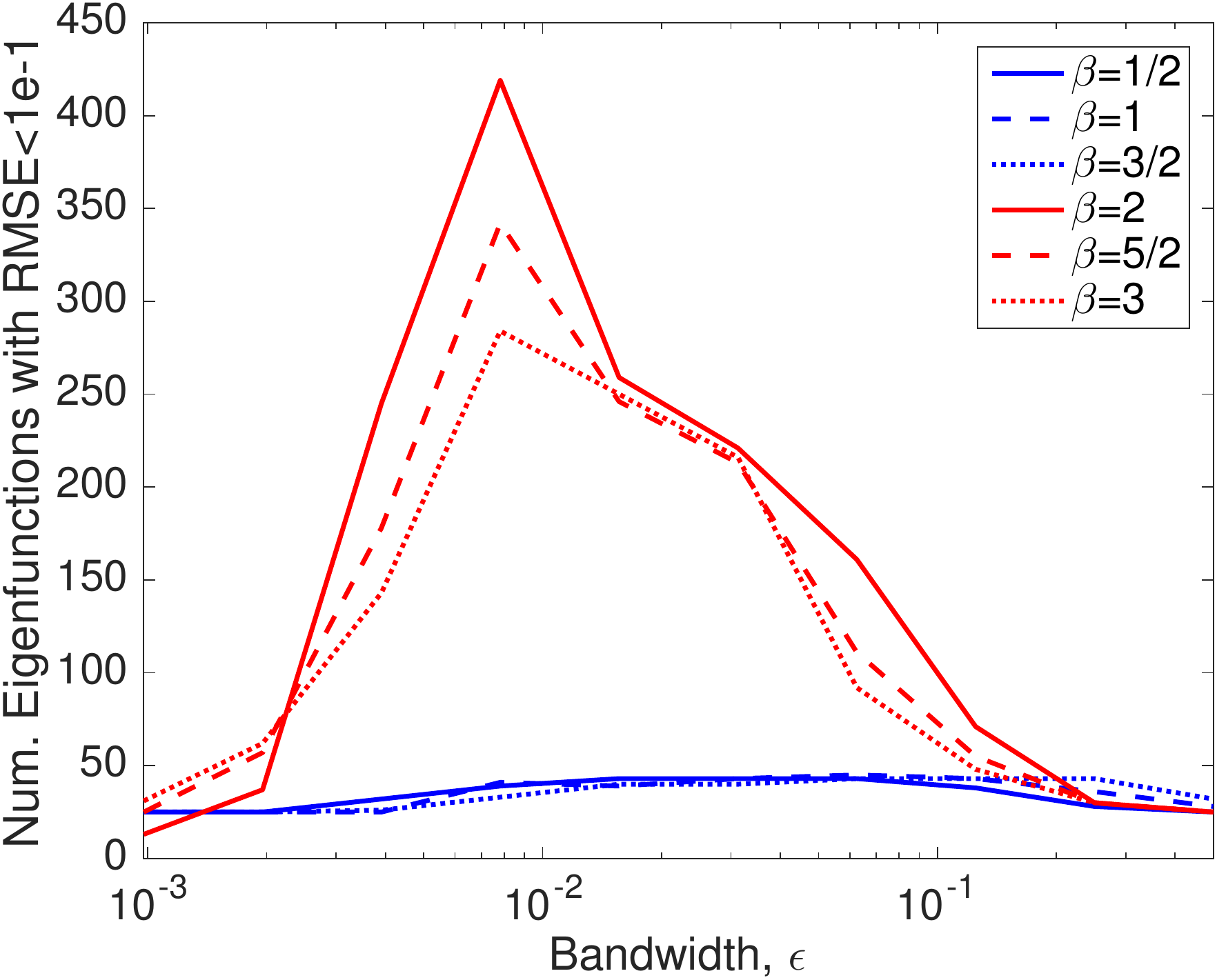}
\centering\includegraphics[width=0.42\textwidth]{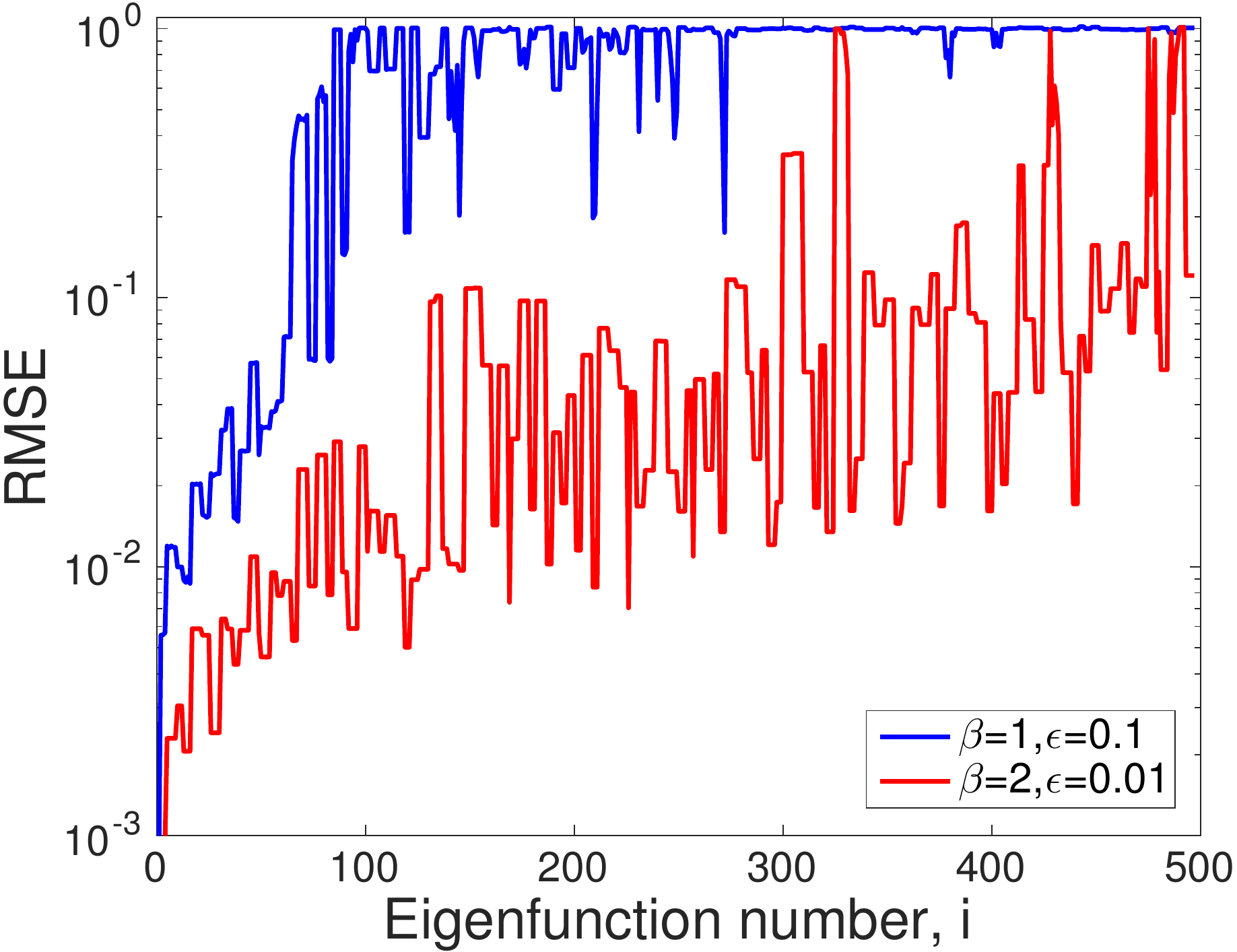}
\centering\includegraphics[width=0.4\textwidth]{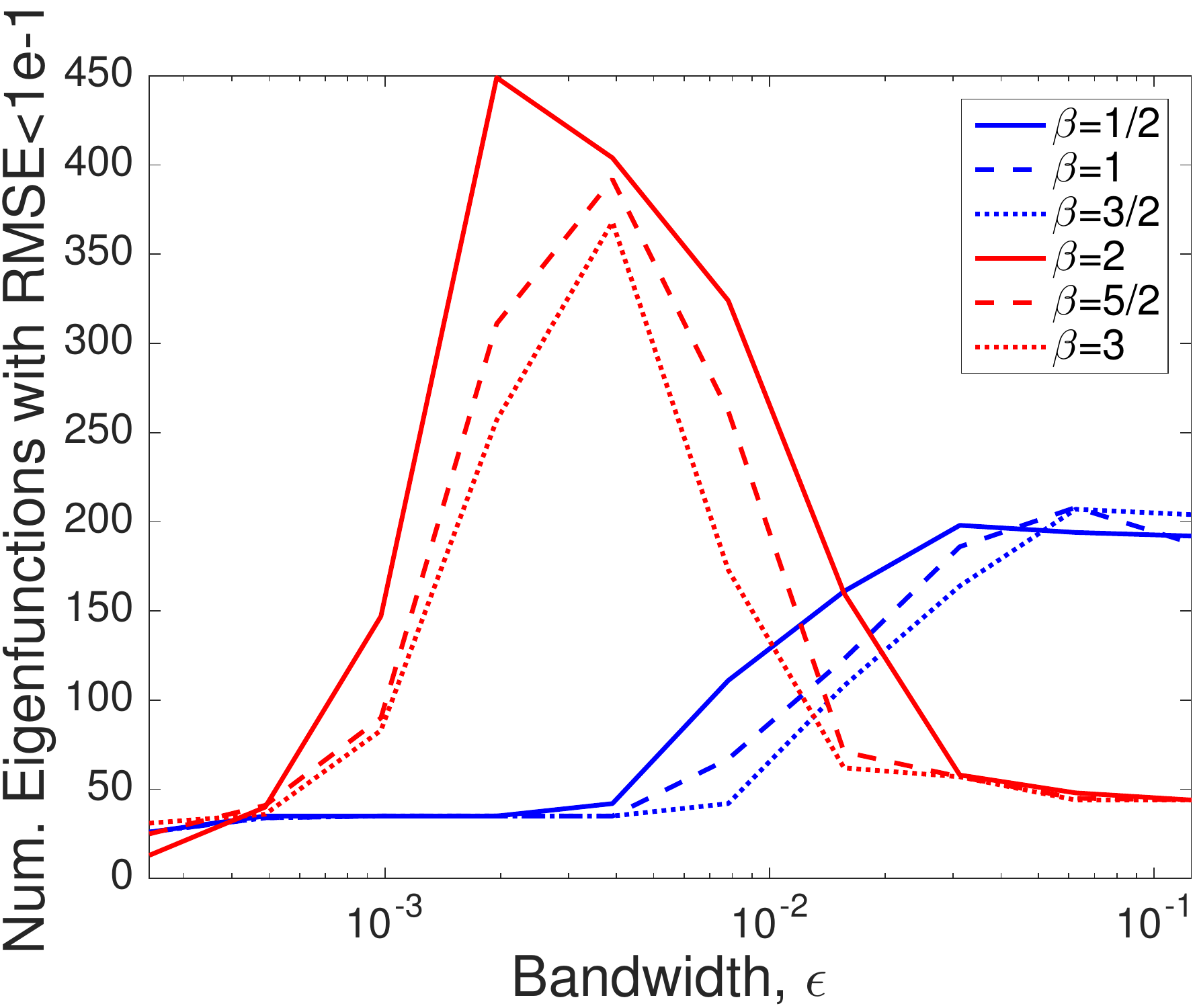}
\centering\includegraphics[width=0.42\textwidth]{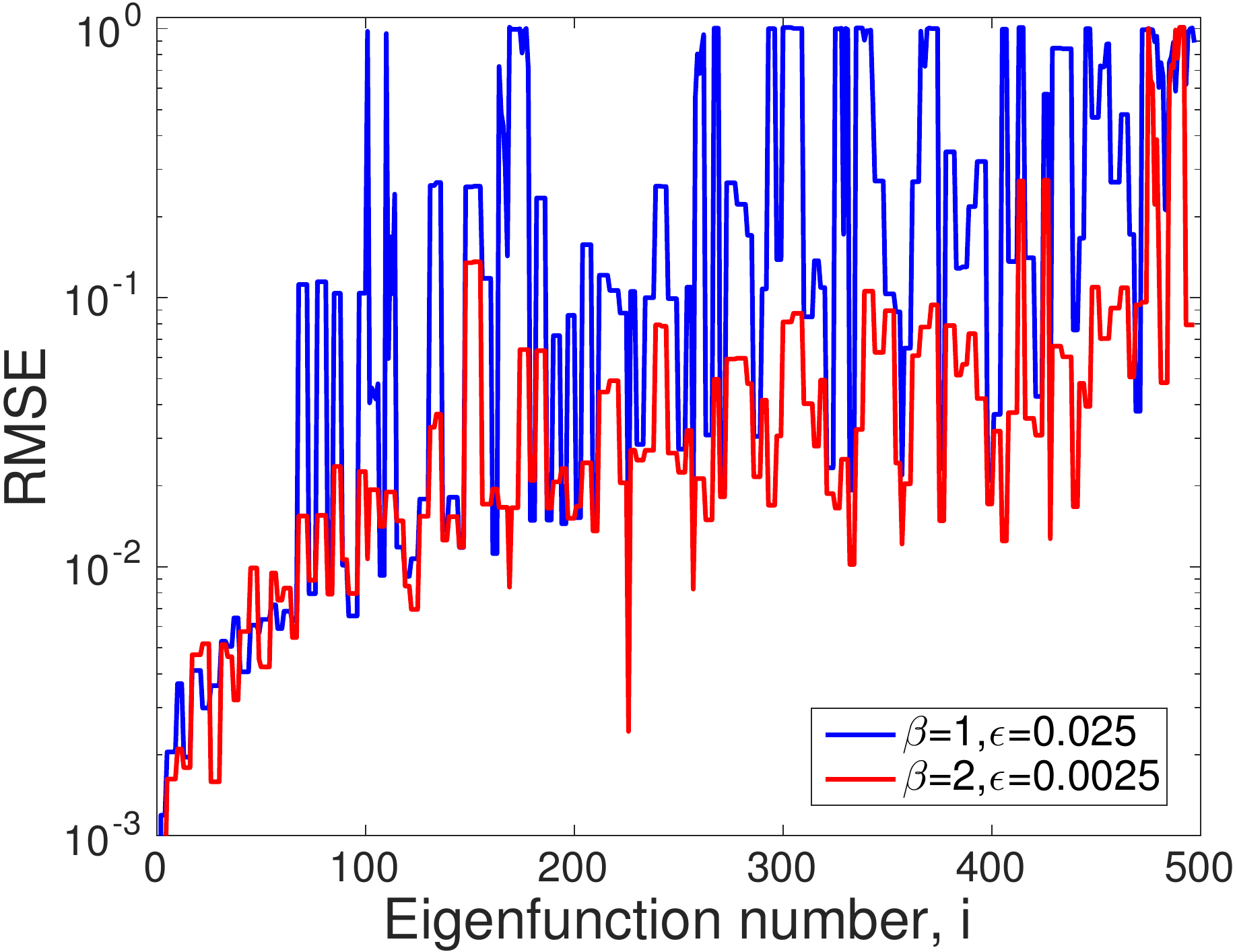}
\caption{Laplacian eigenvalue/eigenfunction estimates (corresponding to the smallest $500$ eigenvalues) for a nearly uniform grid of $N=2562$ points on the unit sphere in $\mathbb{R}^3$ using various values of $s$ using exponential (red) and polynomial (blue) kernels.  Top, left: Spectra compared to the power laws $j,j^{3/4},j^{1/2},j^{1/4}$.  Top, right: Average RMSE of eigenfunctions as a function of the bandwidth $\epsilon$.  Second row, left: Number of eigenfunctions with RMSE less than $0.1$ as a function of the bandwidth $\epsilon$. Second row, right: Comparison of the RMSE as a function of eigenvector number for $s=1/2$ (blue) and $s=1$ (red) with optimally tuned bandwidth.  Bottom row, same as second row but with $N=10242$ data points.}
\label{figure5}
\end{figure}  

An additional challenge was the repeated eigenvalues due to the symmetry of the sphere leading to multidimensional eigenspaces.  We used the eigenvalues from the $N=40962$ grid to detect repeated eigenvalues (with a threshold of $10^{-6}$) and applied the same method as for the circle.  Namely, for each group of eigenfunctions having the same eigenvalue we estimated the linear transformation (a $\ell \times \ell$ matrix where $\ell$ is the dimension of the eigenspace) that optimally mapped the estimated eigenfunctions onto the true eigenfunctions before computing the RMSE.

As with the circle, the best results were obtained with $\beta =2$, however the $\beta = 5/2$ and $\beta=3$ results were much closer to $\beta=2$ on the sphere than on the circle.  We expect that this is due to the curvature on the sphere; namely, since the unit circle has zero curvature the kernel $e^{-d_g(x,y)^2/t}$ is exactly the heat kernel (up to constants and rescaling time), whereas on a curved manifold the heat kernel will be equal to this exponential multiplied by a polynomial with coefficients that depend on the curvature and its derivatives.  In other words, on a flat manifold (such as the circle) the $\beta=2$ exponential kernel has fewer error terms than other values of $\beta$, but this does not hold for general manifolds (such as the sphere).  Finally we note that as $N$ increases the difference advantage of the exponential kernel in terms of optimal RMSE appears to decrease. However, the exponential kernels maintain a significant computational advantage since they do not require one to estimate the geodesic distance.

%%%%%%%%%%%%%%%%%%%%%%%%%%%%%%%%%%%%%%%%%%%%%%%%%%%%%%%%
\subsection{Flat manifold with boundary}
%%%%%%%%%%%%%%%%%%%%%%%%%%%%%%%%%%%%%%%%%%%%%%%%%%%%%%%%

So far our focus has been compact manifolds $\mathcal{M}$ without boundaries. Recall that when the manifold  
$\mathcal{M}$ is a flat torus or $\mathbb{R}^n$, the spectral fractional Laplacian given in Definition~\ref{specdef} is equivalent (up to constants) 
to the so-called integral fractional Laplacian (see \cite[Pg.~15]{NAbatangelo_EValdinoci_2017a}) defined for $u \in C_c^\infty(\mathcal{M})$ (i.e., $u$ is compactly supported on 
$\mathcal{M} \subset \mathbb{R}^n$) as
    \begin{equation}\label{eq:dwb1}
        (-\Delta)^s_I u(x) 
            = c_{n,s} \mbox{ P.V.} \int_{\mathbb{R}^n} \frac{u(x)-u(y)}{|x-y|^{n+2s}}\; dy 
    \end{equation}
for every $x \in \mathcal{M}$. In the above equation, ``P.V." stands for the 
``Cauchy Principle Value", i.e., we understand \eqref{eq:dwb1} as 
    \begin{equation*}
        (-\Delta)^s_I u(x) 
           = c_{n,s} \; \lim_{\varepsilon \downarrow 0} 
               \int_{\mathbb{R}^n\setminus B(x,\varepsilon)} \frac{u(x)-u(y)}{|x-y|^{n+2s}}\; dy . 
    \end{equation*}
Here $c_{n,s} = \frac{s 2^{2s} \Gamma\left( \frac{n+2s}{2}\right)}{\pi^{\frac{n}{2}} \Gamma(1-s)}$ is a normalization constant depending only on $n$ and $s$, see \cite{MWarma_2015a,antil2018a,antil2018b}. The purpose of this section is to consider flat manifolds $\mathcal{M}$ with boundaries. Our main 
goal is to understand whether the approximate generator in the nonlocal case is the spectral or 
integral fractional Laplacian. As it was thoroughly discussed in \cite{RServadei_EValdinoci_2014a} these
two operators are completely different in general. Before we embark on our journey we also recall
that our data lies on the manifold and not outside the manifold. Indeed using \eqref{eq:dwb1}
we have that 
    \begin{equation}\label{eq:dwb2}
        (-\Delta)^s_I u(x) 
            = c_{n,s} \mbox{ P.V.} \int_{\mathcal{M}} \frac{u(x)-u(y)}{|x-y|^{n+2s}}\; dy 
             + c_{n,s} \; u(x)\int_{\mathbb{R}^n \setminus \mathcal{M}}\frac{1}{|x-y|^{n+2s}}\; dy, 
    \end{equation}
where in the last integral we have used the fact that $u$ is supported only on $\mathcal{M}$. 
In other words, we can only expect to recover the integral over $\mathcal{M}$ in \eqref{eq:dwb2}
unless we consider points in $\mathbb{R}^n\setminus\mathcal{M}$. We shall illustrate with the help 
of a numerical example that the proposed fractional diffusion maps algorithm estimates the integral over $\mathcal{M}$ in \eqref{eq:dwb2}, which is also known as the \emph{regional} fractional Laplacian:   
    \begin{equation}\label{eq:dwb3}
        (-\Delta)^s_{R} u(x) 
            = c_{n,s} \mbox{ P.V.} \int_{\mathcal{M}} \frac{u(x)-u(y)}{|x-y|^{n+2s}}\; dy.
    \end{equation}
Notice that $(-\Delta)^s_I$ and $(-\Delta)^s_R$ differ only by a potential term, nevertheless
this term is difficult to manipulate. 

Next we provide a numerical example to support this claim. We consider the following configuration:
$\mathcal{M} = [0,1]$ with boundary at 0 and 1. Moreover, we set $u(x) = x^2$
and $s = \frac12$. Using the definition of P.V. according to \eqref{eq:dwb3} we
arrive at 
    \begin{eqnarray*}
        (-\Delta)^s_{R} u(x) 
            &=& c_{n,s} \; \lim_{\varepsilon \downarrow 0} 
                \int_{(0,1)\setminus (x-\varepsilon,x+\varepsilon)} 
                    \frac{u(x)-u(y)}{|x-y|^{n+2s}}\; dy \\
            &=& c_{n,s} \; \lim_{\varepsilon \downarrow 0} 
                \int_{(0,1)\setminus (x-\varepsilon,x+\varepsilon)} 
                    \frac{x+y}{x-y}\; dy        
    \end{eqnarray*}
where in the last step we have used the definition of $u$ in conjunction with 
the facts that $n=1$ and $s = \frac12$. Whence 
    \BEA\label{reglaplaceexample}
        (-\Delta)^s_{R} u(x) &=& c_{n,s} \; \lim_{\varepsilon \downarrow 0} 
          \left(-2x \log(y-x) - y \right) |_{(0,1)\setminus (x-\varepsilon,x+\varepsilon)} \nonumber\\
          &=& c_{n,s} \left(-1+2x \log\left( \frac{x}{1-x} \right) \right) 
    \EEA
where the last equality follows after basic algebraic manipulations. 

We note that local kernels can only approximate eigenfunctions $\phi_k$ of the Neumann-Laplacian on $\mathcal{M}=[0,1]$, which are $\phi_k(x) = \cos(\pi k x)$ with eigenvalues $\lambda_k = \pi^2 k^2$.  In order to obtain the ground truth for $(-\Delta)_S^{s}u$, we compute the spectral fractional Laplacian using these analytic eigenfunctions instead of using the diffusion maps estimated eigenfunctions. That is, we compute
$(-\Delta)^s_S u = \sum_{k=1}^M \lambda_k^s \hat u_k \phi_k(x)$ with $\hat u_k = \left<u,\phi_k\right>/||\phi_k||^2$, where the inner product involves integral of functions $u(x)=x^2$ and the analytic $\phi_k(x)$ that can be done explicitly. Numerically, we found nearly identical results using various $M=500, 1000, 2000$.

    \begin{figure}[!h]
    \centering
        \includegraphics[width=0.7\textwidth]{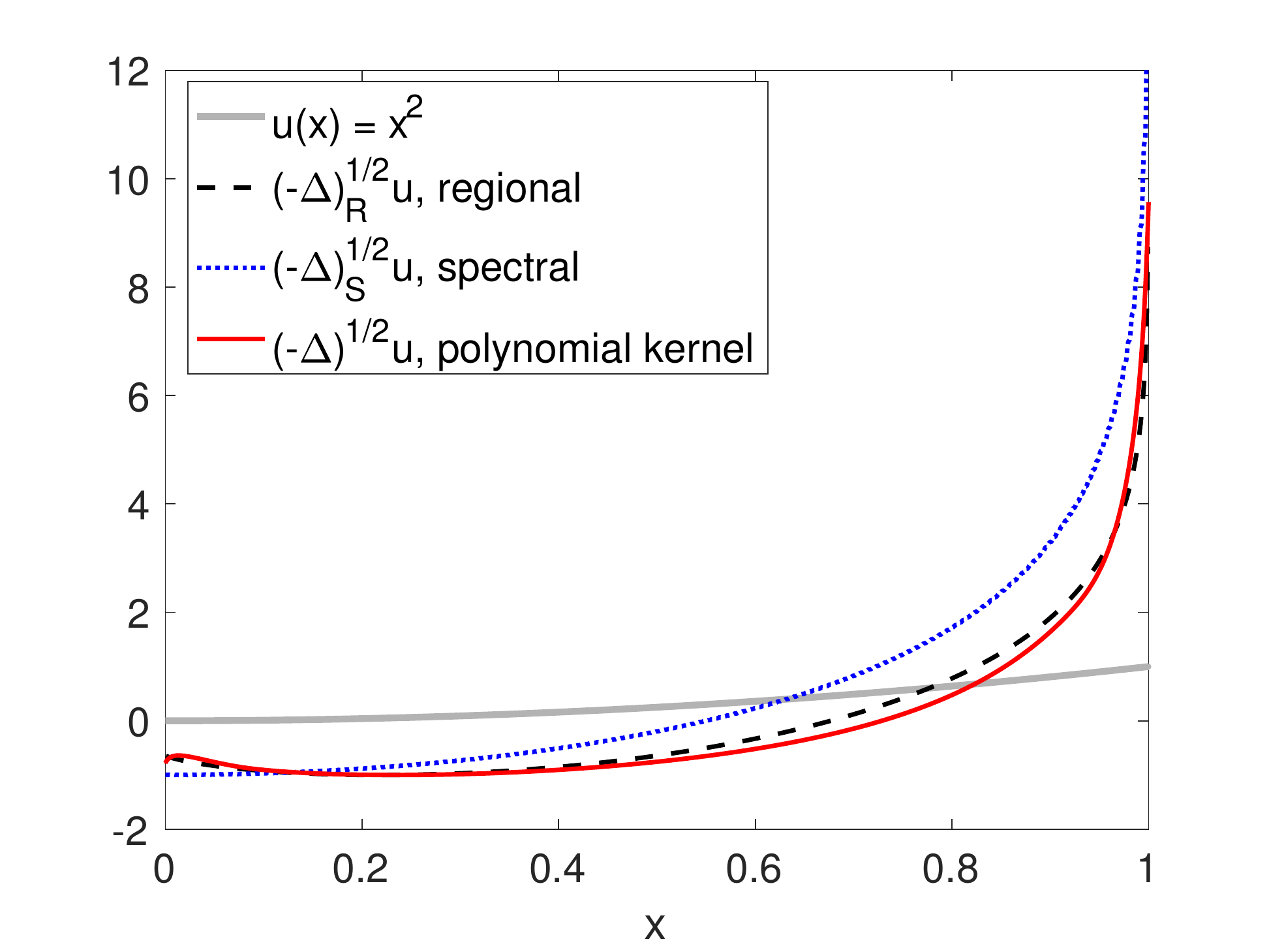}
        \caption{Let $\mathcal{M} = [0,1]$ with boundary at 0 and 1. We let 
        $n=1$, $u(x) = x^2$, and $s = \frac12$. In the above panel we compare
        the application of spectral fractional Laplacian (cf. Definition~\ref{specdef})
        and regional fractional Laplacian (cf.~\eqref{eq:dwb3}) onto $u(x)$. We observe
        that with our approach we are recovering the regional fractional Laplacian.
        We emphasize that boundary behavior of our dichotomy approach is part of 
        the future work.}
        \label{f:compSpecReg}
    \end{figure}

In Figure~\ref{f:compSpecReg}, the estimation of the spectral fractional Laplacian $(-\Delta)^s_S u$ with analytic eigenfunctions and the explicit solution of the regional fractional Laplacian, $(-\Delta)^s_R u$ in \eqref{reglaplaceexample} are compared to the results of applying the fractional diffusion maps algorithm with a polynomial kernel. In this figure, we have normalized the minimum of all the functions to $-1$ to cancel the effect of multiplicative constants. Notice that the numerical estimate obtained from the non-local heat kernel are much closer to the regional fractional Laplacian. This result leads us to hypothesize that polynomial kernels are estimating the regional fractional Laplacian which is inaccessible using local kernels. While this result is encouraging, it stimulates a more thorough investigation for improving the approximation, especially near the boundaries, which is beyond the scope of the current paper.
% \TB{How should we say this now:} However, caution must be observed near the boundary. The boundary behavior of fractional diffusion maps will be part of a future work.

{

\subsection{Application to Kernel Ridge Regression}

A common application of kernel methods in machine learning is the kernel ridge regression.  The goal is to learn an unknown function $f: \mathbb{R}^n \to \mathbb{R}$ given noisy data $y_i = f(x_i) + \omega_i$ for $i = 1,...,N$.  A kernel regression is an approximation of $f$ based on a kernel $k:\mathbb{R}^n \times \mathbb{R}^n \to \mathbb{R}$ given by,
\[ \hat f(x) = \sum_{i=1}^N k(x,x_i)c_i, \hspace{40pt} \hat f(x_i) = \left({\bf K}\vec c  \right)_i. \]
We note that the kernel function $k$ is typically chosen to be positive definite so that ${\bf K}$ is invertible and the space of such functions $\hat f$ is a finite dimensional Reproducing Kernel Hilbert Space (RKHS).  A \emph{kernel regression} would simply set $\vec c = {\bf K}^{-1}\vec y$ where $\vec y_i = y_i$ is the vector of noisy data.  However, when we have certain a priori knowledge about the unknown function $f$, for instance, it is known to be regular (for example we may know that $f$ is $k$-times continuously differentiable) we can improve our estimate using a \emph{kernel ridge regression} which is defined by the optimization problem,
\begin{equation} 
	\label{eq:FDopt}
	\min_{\vec c} \left\{ | \vec y - {\bf K}\vec c |^2 + \delta |\vec c|^2 \right\} = \min_{\hat y = {\bf K}\vec c} \left\{ |\vec y - \hat y|^2 + \delta |{\bf K}^{-1}\hat y|^2 \right\} 
\end{equation}
which has solution $\vec c = ({\bf K}^\top{\bf K} + \delta {\bf I})^{-1} {\bf K}^\top \vec y$ that defines the function $\hat y_i = \hat f(x_i)$ with $\hat y = {\bf K}\vec c$.  A significant issue is the choice of kernel function, $k$, and the type of regularity imposed by various kernel choices.  Two common choices of kernel functions are the radial basis functions kernel with Gaussian exponential decay (corresponding to $\beta=2$) and polynomial decay kernels (corresponding to $\beta<2$).

For simplicity we will assume that $x_i \subset \iota(\mathcal{M}) \subset\mathbb{R}^n$ are uniformly distributed on a smooth compact submanifold $\iota(\mathcal{M})$.  When ${\bf K}$ approximates a heat kernel, in the limit of large $N$ we have 
	\begin{equation}
		\label{eq:Kinv}
		({\bf K}^{-1} \hat y)_i = \mathcal{K}_{-t} \hat f(x_i) 
	\end{equation} 
where $\mathcal{K}_{-t}$ is the ``backward-in-time" semigroup. In view of \eqref{eq:Kinv}, we can formally interpret the finite-dimensional kernel ridge regression optimization problem \eqref{eq:FDopt}, as an approximation to the following optimization problem 
\begin{equation}
	\label{eq:IDopt}
	\min_{\hat f \in H \subset \subset L^2(\mathcal{M})} \left\{ ||f - \hat f||_{L^2(\mathcal{M})}^2 + \delta ||\mathcal{K}_{-t}\hat f||_{L^2(\mathcal{M})}^2 , \right\} 
\end{equation}
in function spaces. A rigorous proof of the limiting behavior of \eqref{eq:FDopt} to \eqref{eq:IDopt} will be part of a forthcoming work. The notation $H \subset\subset L^2(\mathcal{M})$ indicates a compact embedding of the function space $H$ in $L^2(\mathcal{M})$. In other words, the second term in \eqref{eq:IDopt} enforces certain smoothness onto $\hat{f}$. Indeed, in case of exponential kernel, we are enforcing Laplacian type regularization onto $\hat{f}$, i.e., two derivatives. On the other hand, in the nonlocal case, we are enforcing a fractional Laplacian type regularization, i.e., $\beta = 2s$ derivatives.  We demonstrate the advantage of a polynomial kernel, due to reduced imposition of regularity, in the following example. 

Consider the case of the unit circle, $\mathcal{M} = S^1$, and the indicator function $f(\theta) = 1_{[0,\pi]}(\theta)$.  In Fig.~\ref{kernelreg} we show $f$ as a function of $\theta$, along with various data sets with $N \in \{100,500,2500\}$ (left to right) with independent Gaussian noise with mean zero and standard deviation $0.05$ (top) and $0.005$ (bottom) where we have zoomed to focus on the discontinuity.  In order to compare the exponential ($\beta=2$) and polynomial ($\beta=1$) kernels, we used a data set of $N$ points to learn the parameters $\epsilon$ and $\delta$ by minimizing the cross-validation error.  In particular, for a grid of values of $\epsilon \in [10^{-3},10^0]$ and $\delta \in [10^{-20},10^{-2}]$ we split the data set in half and learned $\vec c$ from half of the data points and computed the error on the other half of the data points.  We then chose the optimal pair $(\epsilon,\delta)$ and we computed the expected value of the regression function by setting $\vec y^{\rm true}_i = f(x_i)$ and $\vec c = ({\bf K}^\top{\bf K} + \delta {\bf I})^{-1} {\bf K}^\top \vec y^{\rm true}$.  Note that this is the expected value of the regression since $\vec y = \vec y^{\rm true} + \vec \omega$, the mean of $\omega$ is zero, and $\vec c$ is linear in $\vec y$.  

In Fig.~\ref{kernelreg} we compare the expected regression function for the exponential (red, dashed, $\beta=2$) and polynomial (blue, solid, $\beta=1$) kernels.  Notice that the exponential kernel exhibits oscillations near the discontinuity due to higher smoothness, this can be seen from its connection to Laplacian $(-\Delta)$ as argued in \eqref{eq:IDopt}.  
On the other hand, the heat kernel associated to the polynomial kernel is connected to the fractional Laplacian $(-\Delta)^{1/2}$ and which imposes far less smoothness.
This results in the polynomial kernel regression function having much smaller oscillation near the discontinuity, which also decreases as the noise decreases (bottom row).  This example shows how understanding the large data limit of various kernel functions can improve our understanding of the behavior of a kernel based learning algorithm.

    \begin{figure}[!h]
    \centering
        \includegraphics[width=0.32\textwidth]{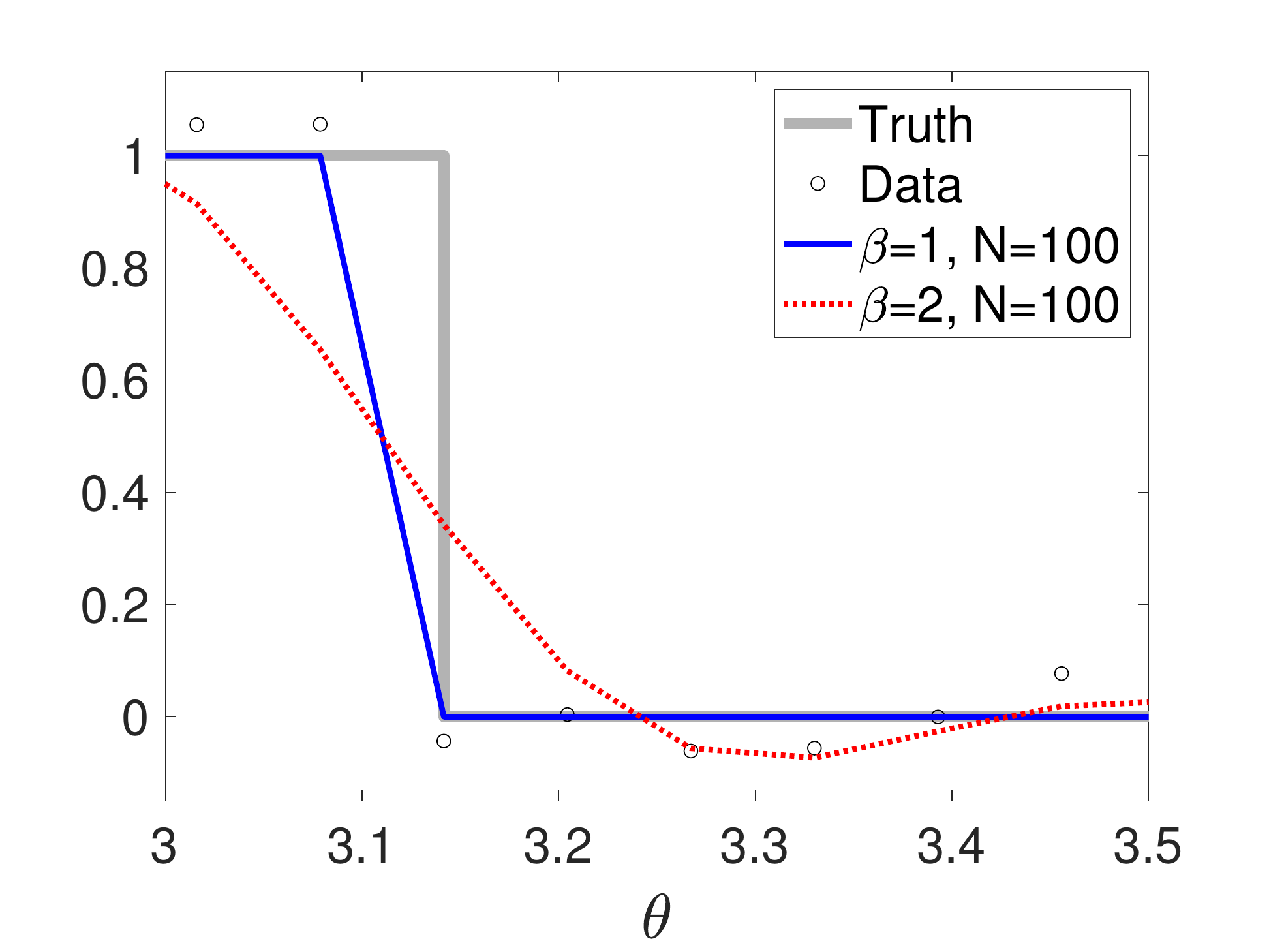}
        \includegraphics[width=0.32\textwidth]{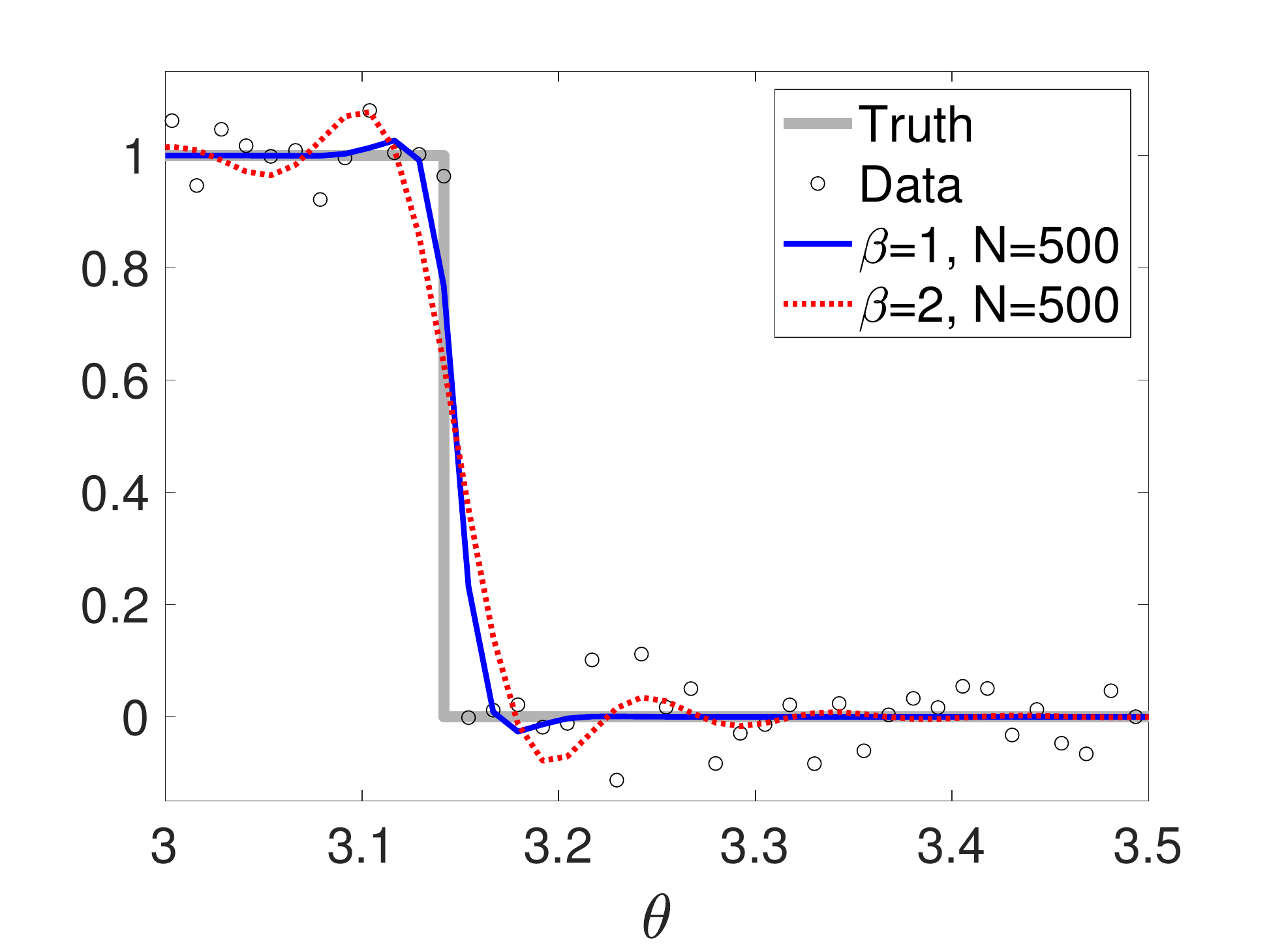}
        \includegraphics[width=0.32\textwidth]{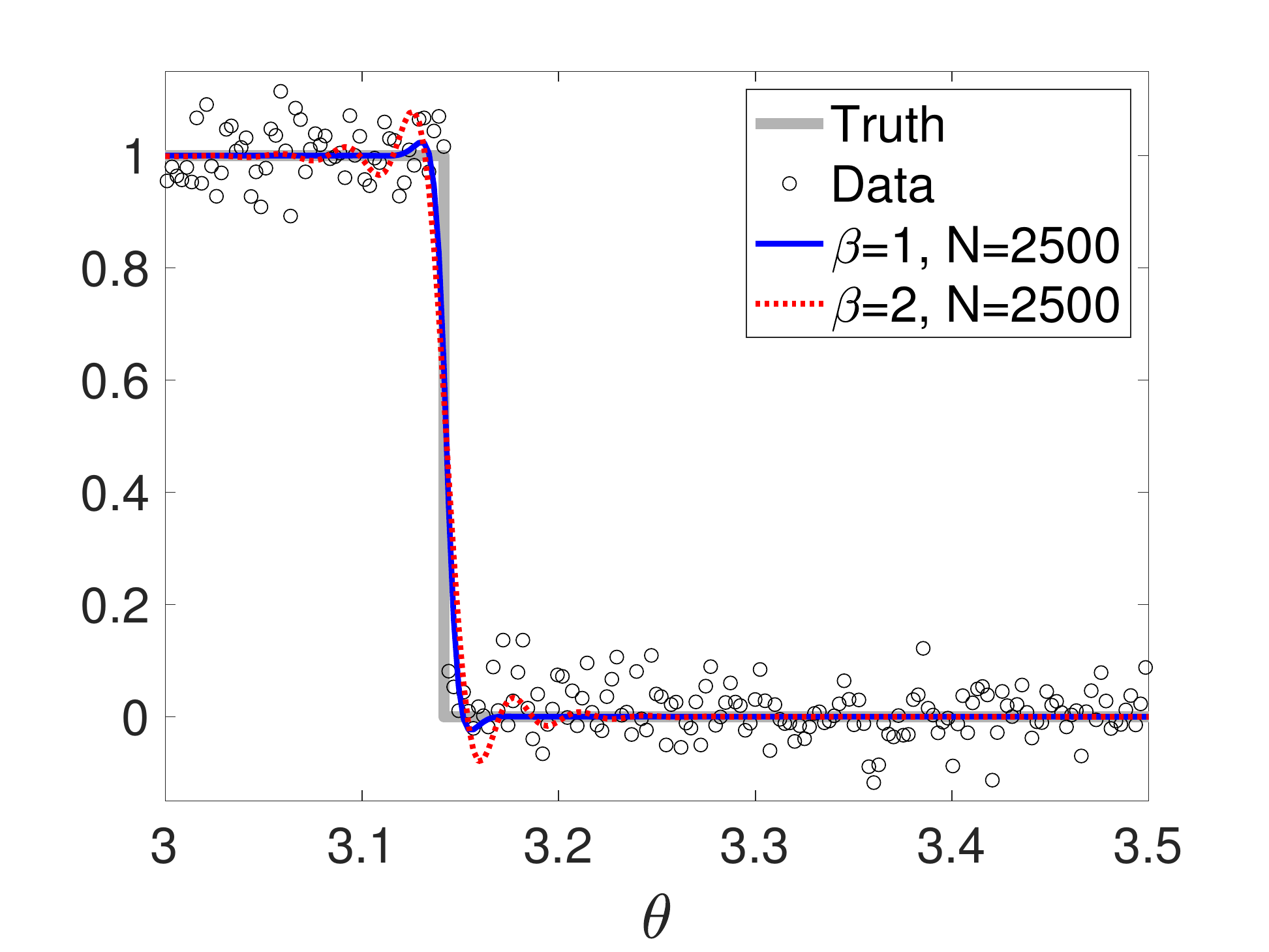}\\
        \includegraphics[width=0.32\textwidth]{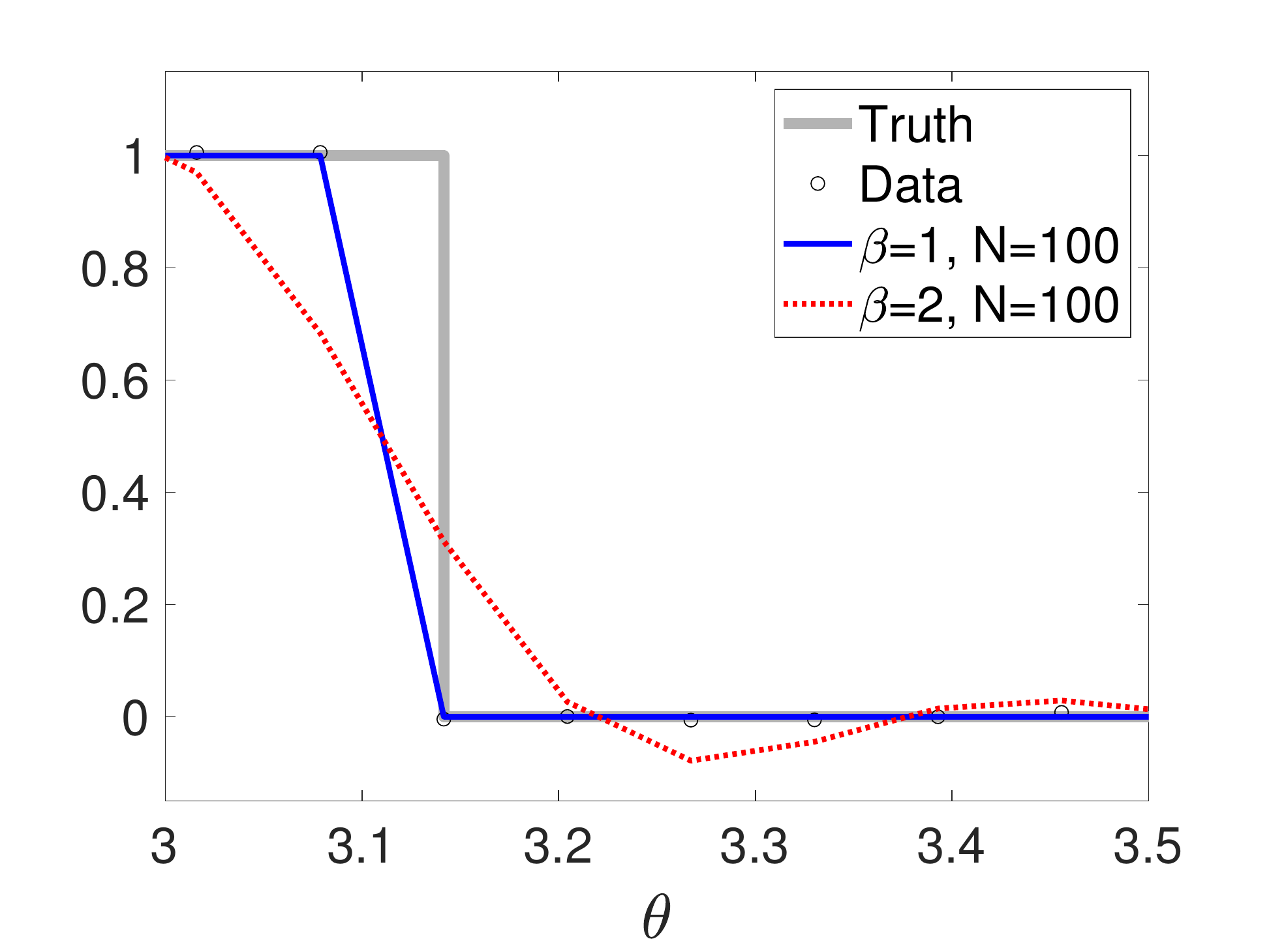}
        \includegraphics[width=0.32\textwidth]{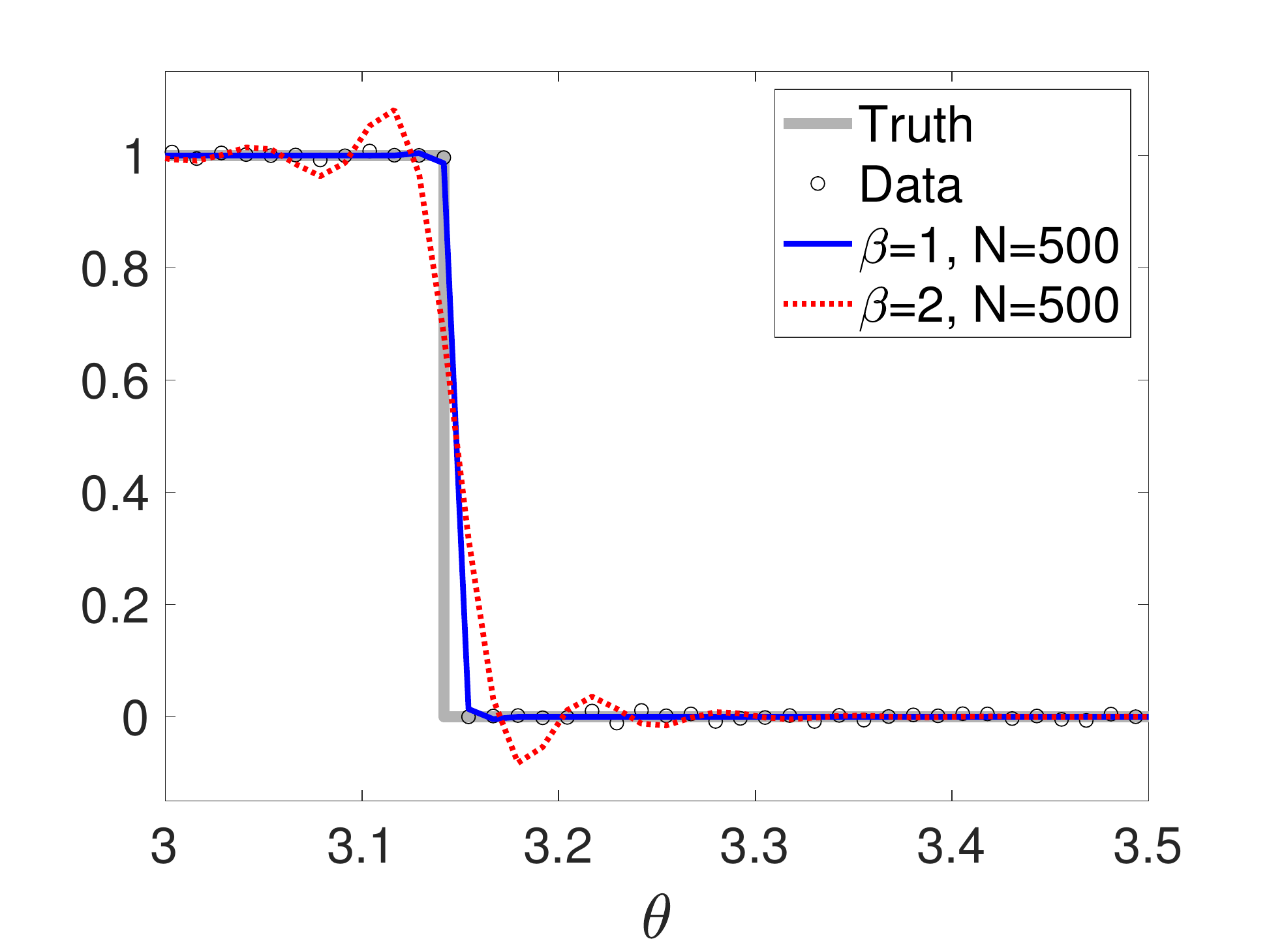}
        \includegraphics[width=0.32\textwidth]{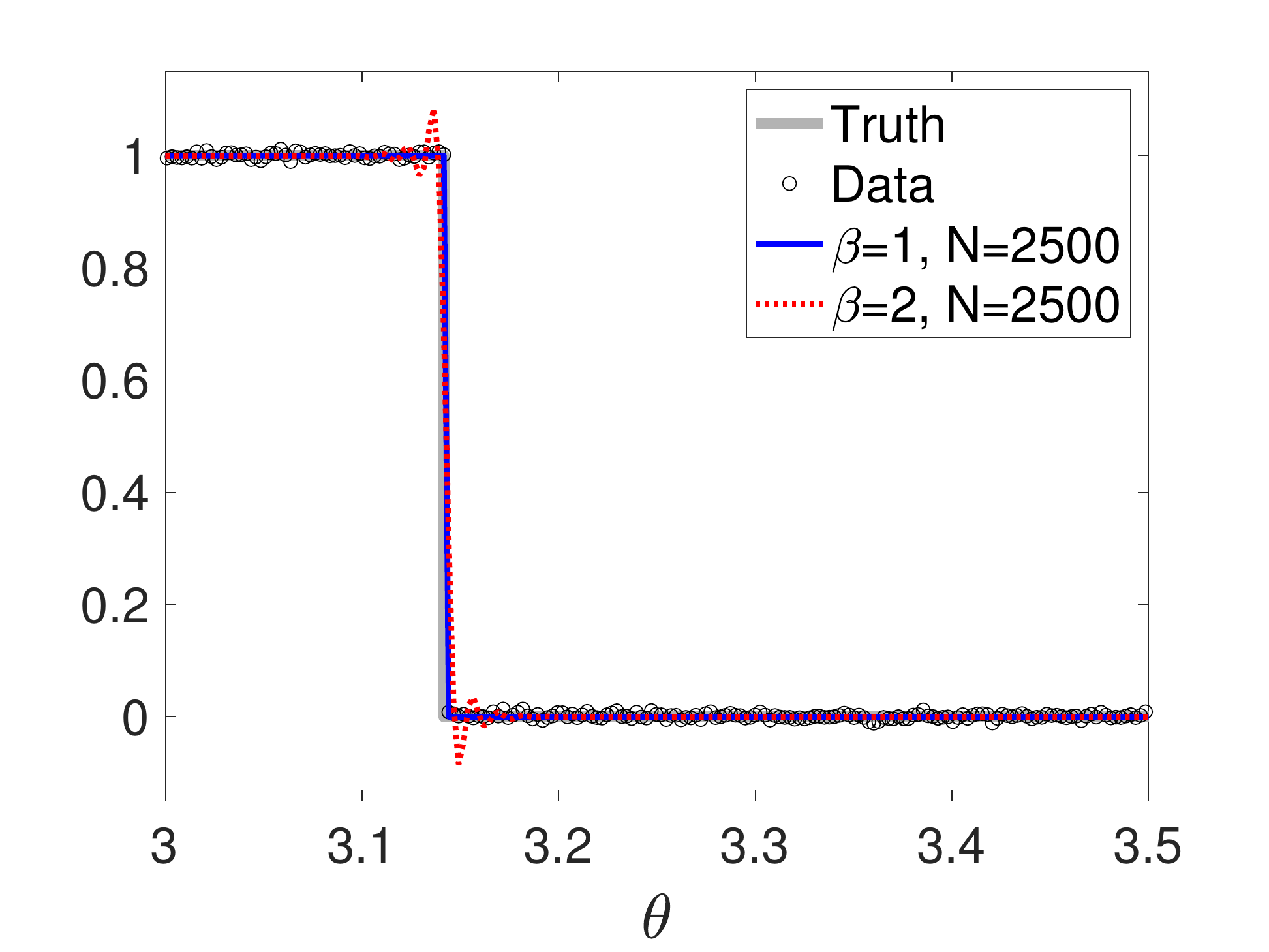}
        \caption{Kernel ridge regression on the unit circle $S^1$ to learn the indicator function $f(\theta) = 1_{[0,\pi]}(\theta)$. We show the expected regression function for the polynomial kernel (blue, solid, $\beta=1$) and exponential kernel (red, dashed, $\beta=2$) for $N\in \{100,500,2500\}$ (left to right) and mean zero Gaussian noise with  standard deviation $0.05$ (top row) and $0.005$ (bottom row).  The exponential kernel is associated to the classical Laplacian and the regression function is forced into a space of high regularity which creates oscillations at the discontinuity, whereas the fractional Laplacian associated to the polynomial kernel imposes less regularity and gives a better fit.}
        \label{kernelreg}
    \end{figure}

}

%%%%%%%%%%%%%%%%%%%%%%%%%%%%%%%%%%%%%%%%%%%%%%%%%%%%%%%%
\section{Conclusion and future work}\label{conclusion}
%%%%%%%%%%%%%%%%%%%%%%%%%%%%%%%%%%%%%%%%%%%%%%%%%%%%%%%%

The geometric understanding of certain kernel-based algorithms advanced by diffusion maps has given a valuable new perspective on what these algorithms are constructing and how to use it to understand the underlying data.  For example, by interpreting the kernel eigenfunctions as eigenfunctions of the intrinsic Laplacian operator on a manifold, we can use the associated eigenvalues to enforce regularity restrictions or Sobolev norm regularity conditions on interpolation problems.  However, diffusion maps (and generalizations \cite{localkernels}) is restricted to kernels with exponential decay, whereas kernel methods used in statistical learning theory may require general kernels.  The fractional diffusion maps approach extends the geometric understanding of data to a much larger class of kernels (including the well-known polynomial kernels).  At the same time, the fractional diffusion maps algorithm offers a new way to estimate certain fractional Laplacian operators on manifolds.  

{This kernel-based perspective is related to the popular Isomap and Parallel Transport Unfolding (PTU) algorithms.  Isomap \cite{tenenbaum2000global,MBernstein_VDSilva_JCLangford_JBTenenbaum_2000a} and PTU \cite{PTU} can be easily interpreted under the restrictive assumptions of manifolds that are both topologically trivial (contractible) and geometrically trivial (developable, which implies flat in the Riemannian sense).  However, Isomap and PTU are still useful for many examples that do not satisfy these assumptions, and in these contexts, there is a limited theoretical interpretation of their behavior.  While our analysis cannot address these algorithms directly, our kernel-based algorithms are closely related, and we rely on their methods for the key step of approximating geodesic distances.  Thus, our results give an alternative final step to Isomap and PTU, by replacing Multi-Dimensional Scaling (MDS) with a rigorously interpretable kernel method.}

Many directions of future work remain open.  For many data sets (such as data generated by a chaotic dynamical system) the assumption of an underlying manifold may be unrealistic, whereas a metric measure space would be a much less stringent assumption. The parallel between the dichotomies in the heat kernel and the associated fractional diffusion maps algorithm suggests that a generalization to a larger class of metric measure spaces may be possible.  Another important direction for future work would be the generalization to manifolds with boundary and other boundary conditions, { possibly using the ghost points \cite{jh:20} when the points at the boundary are given or using the distance to boundary estimator introduced in \cite{BERRY2017} and uniform expansions near the boundary \cite{ryan} when the boundary points are not given.} This generalization may make fractional diffusion maps a reasonable method for solving equations involving fractional Laplacians on domains that are difficult to mesh, generalizing the local kernels to solve elliptic PDE's on smooth manifolds as proposed in \cite{JohnPDE1,JohnPDE2,jh:20}.  Generalizing to allow anisotropic kernels as in \cite{localkernels} may provide access to geometries that are not inherited from the embedding, and variable bandwidth kernels \cite{variablebandwidth} may allow for data sampled from non-compact manifolds with finite volume. {Another possible research direction is to implement the non-local kernels with improved geodesic distance estimator \cite{PTU}, which consistency requires a careful treatment of the error analysis. }

\section*{Acknowledgments}
H.A. is partially supported by NSF grants DMS-1818772 and DMS-1913004, the Air Force Office of Scientific Research under Award NO: FA9550-19-1-0036, and  the Department of Navy, Naval PostGraduate School under Award NO: N00244-20-1-0005.  T.B. 
is partially supported by NSF grants DMS-1723175 and DMS-1854204. J.H. was partially supported by the ONR Grant N00014-16-1-2888, NSF Grants DMS-1619661 and DMS-1854299.

%%%%%%%%%%%%%%%%%%%%

\appendix

\section{Proofs of technical lemmas}\label{proofs}

\subsection{Proof of Lemma \ref{distcomp}}

\begin{proof}
If we let $s$ be the exponential coordinates for $y$ centered at $x$ so that $y=\exp_x(s)$ then we can Taylor expand $\iota(\exp_x(s))$ around $s=0$.  We first note that since $\exp_x(0)=x$ and $D\exp_x(0) = I_{d\times d}$ we have
\[ \left. D_s \iota(\exp_x(s)) \right|_0 = \left. D\iota(\exp_x(s))D\exp_x(s) \right|_0 = D\iota(\exp_x(0))D\exp_x(0) = D\iota(x). \]
Next, since $D^2\exp_x(0) = 0$ we have
\[  \left. D_s^2 \iota(\exp_x(s)) \right|_0 = \left. D\exp_x(s)^\top H(\iota)(\exp_x(s))D\exp_x(s) + D\iota(\exp_x(s))D^2\exp_x(s) \right|_0 = H(\iota)(x).\]
Together these equalities give the Taylor expansion
\[ \iota(y) = \iota(\exp_x(s)) = \iota(x) + D\iota(x)s + H(\iota)(x)(s,s) + \mathcal{O}(|s|^3). \]
A key feature of this expansion is that the Hessian $H(\iota)$ of the embedding is orthogonal to the tangent \cite{rosenberg} space, so when computing the norm we have
\[ |\iota(y)-\iota(x)|^2 = |D\iota(x)s + H(\iota)(x)(s,s) + \mathcal{O}(s_i^3)|^2 = |D\iota(x)s|^2 + \mathcal{O}(|s|^4) \]
where the only third order term is the cross term $\left<D\iota(x)s,H(\iota)(x)(s,s)\right> = 0$ by the orthogonality mentioned above.  Note that the term $\mathcal{O}(|s|^4)$ assumes that the Hessian and the third derivative of the embedding are bounded, so we require the manifold and embedding to be $C^3$.  Since $\iota$ is isometric, the columns of $D\iota(x)$ are orthonormal, so $|D\iota(x)s|=|s|=d_g(x,y)$ meaning
\[ |\iota(y)-\iota(x)|^{\alpha} = d_g(x,y)^{\alpha} + \mathcal{O}(d_g(x,y)^{\alpha+2}). \]
\end{proof}

\subsection{Proof of Lemma \ref{fastdecay}}
\begin{proof}
Since $\gamma>0$, the radius of the ball $|y|=t^{-\gamma}$ is expanding and thus we are integrating over the tail of an exponential, which decays faster than any polynomial in $t$ for any $\gamma$.  To see this we first make the change of variables $w_i = y_i^{\alpha/2}$ so that 
\[ |y|^{\alpha} = \left( \left( \sum_i y_i^2 \right)^{\alpha/4}\right)^2 = \left( \left( \sum_i w_i^{4/\alpha} \right)^{\alpha/4}\right)^2 = |w|_{4/\alpha}^2 \]
and since $dy = \frac{2}{\alpha} \prod_i w_i^{2/\alpha -1} dw$ we have
\begin{align} \int_{|y|>t^{-\gamma}} e^{-c |y|^{\alpha}} \, dy  &= \frac{2}{\alpha}\int_{|w|_{4/\alpha}^{2/\alpha} >t^{-\gamma}} e^{-c|w|_{4/\alpha}^2} \prod_i w_i^{2/\alpha-1} \, dw  \nonumber \\
&\leq  \frac{2}{\alpha}\int_{c_2 |w| > t^{-\gamma\alpha/2}} e^{-c_1 |w|^2} \prod_i w_i^{2/\alpha-1} \, dw  \nonumber \end{align}
for some $c_1,c_2>0$ where the last inequality follows by equivalence of norms we have $a|w| < |w|_{4/\alpha} < b|w|$.  Next we further expand the domain of integration to allow us to split up the integrals.  Notice that the cube with sides $|w_i| \leq \frac{t^{-\gamma \alpha/2}}{c_2 2^{1/d}}$ fits inside the ball of radius $\frac{t^{-\gamma\alpha/2}}{c_2}$ so we can extend the integral to the outside of the cube and it will only get larger.  Over this domain we can split up the integral and the integrals are the same over each variable $w_i$, so continuing the previous inequality we have 
\begin{align} \hspace{40pt} &\leq \frac{2}{\alpha}\int_{c_2 |w_i| > t^{-\gamma\alpha/2}/2^{1/d}}  \prod_i w_i^{2/\alpha-1} e^{-c_1 w_i^2} \, dw_1,...,dw_d \nonumber \\
&= \frac{2}{\alpha}\left(\int_{c_2 |w_i| > t^{-\gamma\alpha/2}/2^{1/d}}   w_i^{2/\alpha-1} e^{-c_1 w_i^2} \, dw_i \right)^d \nonumber \\
&\leq \frac{2}{\alpha}\left(\int_{c_2 |w_i| > t^{-\gamma\alpha/2}/2^{1/d}}   w_i e^{-c_1 w_i^2} \, dw_i \right)^d \nonumber \\
&= \frac{2}{\alpha} \left(\frac{1}{2c_1} e^{-\frac{c_1}{2^{2/d} c_2^2} t^{-\gamma\alpha}}\right)^d \nonumber \\
&= c_3 e^{-\frac{c_4}{t^{\gamma\alpha}}}
\end{align}
for constants $c_3,c_4 > 0$, where the last inequality follows from the fact that $\alpha\geq 1$ so that $2/\alpha - 1 \leq 1$ and $w_i^{2/\alpha-1}\leq w_i$ for $t$ sufficiently small so that $w_i>1$ on the domain of integration.  Since $\gamma\alpha > 0$, as $t\to 0^+$ the final term goes to zero faster than any polynomial.
\end{proof}

\subsection{Proof of Lemma \ref{localization}}
\begin{proof}
Substituting the upper bound for $\alpha$-local kernels we have the upper bound
\begin{align} \left| \int_{\stackrel{z\in\mathcal{M},}{ |\iota(z)-x|>t^{1-1/\alpha-\gamma}}} \right. &\hspace{1pt} t^\ell  \left. e^{-c\left|\left| \frac{x-\iota(z)}{t^{1-1/\alpha}} \right|\right|^{\alpha}}f(z)\, d\textup{vol} \right| \nonumber \\
&\leq ||f||_{L^p(\mathcal{M})} t^\ell \left( \int_{z\in\mathcal{M},|\iota(z)-x|>t^{1-1/\alpha-\gamma}} e^{-\frac{c p}{p-1}\left|\left| \frac{x-\iota(z)}{t^{1/\alpha}} \right|\right|^{\alpha}}\, d\textup{vol} \right)^{\frac{p-1}{p}} \nonumber \\
&\leq ||f||_{L^p(\mathcal{M})} t^\ell \left( \int_{|y-x|>t^{1-1/\alpha-\gamma}} e^{-\frac{c p}{p-1}\left|\left| \frac{x-y}{t^{1/\alpha}} \right|\right|^{\alpha}}\, dy \right)^{\frac{p-1}{p}} \nonumber \\
&= ||f||_{L^p(\mathcal{M})} t^\ell  \left(\int_{|w|>t^{-\gamma}} e^{-\frac{c p}{p-1}|w|^{\alpha}} \, t^{d(1-1/\alpha)}dw \right)^{\frac{p-1}{p}} \nonumber \\
&\leq ||f||_{L^p(\mathcal{M})} t^{\ell+\frac{(p-1)d(1-1/\alpha)}{p}} \left(  \int_{|w|>t^{-\gamma}} e^{-\frac{c p}{p-1} |w|^{\alpha}} \, dw \right)^{\frac{p-1}{p}} \nonumber \end{align}
where we applied H\"older's inequality and then extend the integral to all of $y \in \mathbb{R}^n$ outside the ball of radius $t^{-\gamma}$.   We then set $w = \frac{y-x}{t^{1-1/\alpha}}$ so that $dw = t^{-d(1-1/\alpha)} dy$.  Since $\alpha>1$ the result follows from Lemma \ref{fastdecay}.
\end{proof}

%%%%%%%%%%%%%%%%%%%%%%%%%%%%%%%%%%%%%%%%%%%%%%%%%%%%%%%%
\section*{References}
\bibliography{refs}
%%%%%%%%%%%%%%%%%%%%%%%%%%%%%%%%%%%%%%%%%%%%%%%%%%%%%%%%
\end{document}